\newcommand{\Z}{\mathbb Z}
\newcommand{\R}{\mathbb R}
\newcommand{\T}{\mathbb{T}}
\def\R{\mathbb R}
\def\Z{\mathbb Z}
\def\ep{\epsilon}
\def\dk{{\bf d}_1}
\newcommand{\be}{\begin{equation}}
\newcommand{\ee}{\end{equation}}
\newcommand{\ba}{\begin{align}}
\newcommand{\ea}{\end{align}}
\def\1{{\bf 1}}
\def\inte{\int_{\T^d}}
\def\dive{{\rm div}}
\newcommand*\Laplace{\mathop{}\!\mathbin\bigtriangleup}
\newtheorem{teo}{Theorem}[section]
\newtheorem{defin}[teo]{Definition}
\newtheorem{prop}[teo]{Proposition}
\newtheorem{lemma}[teo]{Lemma}
\newtheorem{remark}[teo]{Remark}
\definecolor{mygray}{gray}{0.7}
\definecolor{Red}{cmyk}{0,1,1,0.2}
\begin{document} 

\title{On the long time convergence of potential MFG}
\author{Marco Masoero \thanks{Universit\'e Paris-Dauphine, CEREMADE, Place de Lattre de Tassigny, F- 75016 Paris, France}}

\maketitle

\begin{abstract}

   We look at the long time behavior of potential Mean Field Games (briefly MFG) using some standard tools from weak KAM theory. We first show that the time-dependent minimization problem converges to an ergodic constant $-\lambda$, then we provide a class of examples where the value of the stationary MFG minimization problem is strictly greater than $-\lambda$. This will imply that the trajectories of the time-dependent MFG system do  not converge to static equilibria.
   
 \end{abstract}
 \section*{Introduction}
 Mean Field Games were first introduced by Lasry and Lions \cite{lasry2006jeux, lasry2006jeux2} and, simultaneously, by Huang, Caines and Malamh\'e \cite{huang2006large}. This theory is a branch of the broader theory of Dynamic Games and it is devoted to those models where infinitely many players interact strategically with each other. 
 
 In many cases the Nash equilibria of those games can be analyzed through the solutions of the, so called, MFG system
 $$
\begin{cases} 
-\partial_t u-\Laplace u+H(x,Du)=F(x,m) & \mbox{in } \R^d\times[0,T]\\
-\partial_t m+\Laplace m+{\rm div}(mD_pH(x,Du))=0 & \mbox{in }\R^d\times[0,T]\\
m(t)=m_0,\; u(T,x)=u_{T}(x)&\mbox{in }\R^{d}.
\end{cases}
$$
 
 with unknown the couple $(u,m)$. We can think at $m(t)$ as the distribution of players at time $t$ and $u(t,x)$ as the value function of any infinitesimal player starting from $x$ at time $t$. 
 
 The aim of this paper is to shed some light on the long time behavior of potential MFG when monotonicity is not in place. The long time behavior and the existence of solutions which are periodic in time have been subject of several papers starting from \cite{lionsmean} and the Mexican wave model in \cite{gueant2011mean} to more recent results in \cite{cardaliaguet2013long,cardaliaguet2013long2,cardaliaguet2012long,gomes2010discrete}, but in these papers either monotonicity was assumed or the MFG was not of potential type. Cirant and Nurbekyan were the first to recently provide some results in the direction of periodic solutions for non monotone MFG. Cirant in \cite{cirant2017existence} suggested the existence of non monotone configurations under which oscillatory behaviors were to be expected. Afterwards in \cite{cirant2018variational}, with Nurbekyan, they proved, through bifurcation methods,  the existence of a path of branches which corresponds to a periodic trajectory. The main difference with our work is the choice of the class of solutions. In our case we look at paths which are energy minimizers whereas, in their, it might not be the case. 
 
 Potential MFG are those games whose MFG system can be derived as optimality condition of the following minimization problem
 $$
\mathcal U(T,m_{0})= \inf_{(m,w)}\int_0^T\int_{\T^d}H^*\left(x,-\frac{dw(t)}{dm(t)}(x)\right)dm(t)+\mathcal F(m(t))dt,
 $$
 where $(m,w)$ verifies the Fokker Plank equation $-\partial_{t}m+\Laplace m+{\rm div}w=0$ with $m(0)=m_{0}$ and the coupling function $F$ in the MFG system is the derivative with respect to the measure of $\mathcal F$. These games have been largely studied (see Lasry and Lions \cite{lasry2006jeux2} for existence results and, among others \cite{cardaliaguet2015second,briani2016stable,meszaros2018variational} for further properties) but, so far, not much is known regarding their long time behavior outside the assumption of monotonicity, where Cardaliaguet, Lasry, Lions and Porretta \cite{cardaliaguet2013long2} proved the convergence to the ergodic system
 $$
 \begin{cases}
- \bar\lambda-\Laplace u+H(x,Du)=F(x,m)&\mbox{in } \T^d\\
\Laplace m+{\rm div}(mD_pH(x,Du))=0 & \mbox{in }\T^d.
 \end{cases}
 $$  
 We show that in general this is not the case, even in the very regular setting of non local coupling. We look at the problem from the point of view of weak KAM theory. The link between the two theories is not new and it was already proposed by Cardaliaguet \cite{cardaliaguet2013long} in the first order monotone case, even though in a different manner. 
 
 The paper is divided in three sections. In the first one we prove the convergence of $T^{-1}\mathcal U(T,\cdot)$ when $T$ goes to infinity. The method we use is directly inspired by Lions, Papanicolaou, and Varadhan \cite{lions1987homogenization}: instead of looking directly at $\lim_{T}T^{-1}\mathcal U$ we define the infinite horizon, discounted problem 
 
 $$
 \mathcal V_{\delta}(m_{0})=\inf_{(m,w)}\int_0^\infty e^{-\delta t}\int_{\T^d}H^*\left(x,-\frac{w(t)}{m(t)}\right)dm(t)+\mathcal F(m(t))dt
 $$
 
 and we prove that $\lim_{\delta}\delta\mathcal V_{\delta}(\cdot)=-\lambda$ when $\delta\rightarrow0^{+}$and that this limit is uniform with respect to the initial distribution. A key assumption is the boundedness of the second derivative of $F(x,m)$ with respect to the state variable. This gives uniform semiconcavity estimates of the solutions of the MFG system associated to the discounted minimization problem. The existence of the limit $\lim_{\delta}\delta\mathcal V_{\delta}(\cdot)$ implies the existence of the limit $\lim_{T}T^{-1}\mathcal U$ and the two must coincide. 
 
 As byproduct, we have the existence of a corrector function $\chi$ on the space of measures which enjoys the following dynamic programming principle
 $$
 \chi(m_{0})= \inf_{(m,w)}\left( \int_{t_{1}}^{t_{2}}\int_{\T^{d}}H^{*}\left(x,-\frac{w(t)}{m(t)}\right)dm(s)+\mathcal F(m(s))ds+\chi (m(t_{2}))\right)+\lambda (t_{2}-t_{1}).
 $$

 The second section is devoted to the study of the set of corrector functions. A corrector is any continuous function on the space of measures which verifies the dynamic programming principle above. Both the terminology and the techniques are borrowed from weak KAM theory, in particular we rely on Fathi's book \cite{fathi2008weak}, along with his seminal papers \cite{fathi1997solutions,fathi1997theoreme,fathi1998convergence}.  In principle, as in the standard weak KAM theory, the corrector functions verify an HJB equation in the space of probability measure. In this work nothing is said about this property which is the subject of a paper that is still in progress.
 
 Particular interest is given to the projected Mather set which is the set of probability measures contained in a calibrated curve. We say that $(\bar m,\bar w)$ is a calibrated curve associated to a corrector function $\chi$ if, for any $t_{1}$, $t_{2}\in\R$, $(\bar m,\bar w)$ is optimal for the dynamic programming principle, that is
 $$
 \chi(\bar m(t_{1}))=  \int_{t_{1}}^{t_{2}}\int_{\T^{d}}H^{*}\left(x,-\frac{\bar w(t)}{\bar m(t)}\right)d\bar m(s)+\mathcal F(\bar m(s))ds+\chi (\bar m(t_{2}))+\lambda (t_{2}-t_{1}).
 $$
 These curves play a fundamental role to understand the long time behavior of these MFG. They are indeed the attractors of the dynamics which minimize the discounted, infinite horizon MFG. 
  
 In the third section we focus on the relation between the limit value $\lambda$ and the ergodic value $\bar\lambda$, associated to the stationary MFG, defined by 
 $$
-\bar\lambda=\inf_{(m,w)}\int_{\T^d}H^*\left(x,-\frac{w}{m}\right)dm+\mathcal F(m).
 $$
 We propose two examples which highlight how much important it is the structure of the coupling function $F(x,\cdot)$ in the dynamic of potential MFG. In the first example we impose monotonicity and we recover part of the results in \cite{cardaliaguet2013long2}. In this case the limit value and the ergodic one coincide. 
 
 On the other hand, in the second example, the minimization problems are no longer convex and we can prove that $\lambda>\bar\lambda$. This means that it is not possible that the MFG system converges to a stationary equilibrium. The fact that  $\lambda>\bar\lambda$ implies that the energy of the finite horizon game goes below the energy of the stationary one. Looking at the projected Mather set we can say even more. As this set is compact and it can not contain any stationary curve, calibrated curves can not even approach any static configuration.
 
 \subsection*{Acknowledgement}
I would like to thank Pierre Cardaliaguet (Paris Dauphine) for the fruitful discussions all along this work and Marco Cirant (Universit\`a di Padova) for a crucial hint which resulted in Lemma \ref{exdualpl}.

I was partially supported by the ANR (Agence Nationale de la Recherche) project ANR-16-CE40-0015-01.

 \subsection*{Assumptions and definitions}
 We work on the $d-$dimensional flat torus $\T^{d}=\R^{d}/\Z^{d}$ to avoid to deal with boundary conditions and to set the problem on a compact domain. 
 
 \textbf{Notation:} We denote by $\mathcal P(\T^{d})$ the set of Borel probability measures on $\T^{d}$. This is a compact, complete and separable set when endowed with the Monge-Kantorovich distance $\bm d(\cdot,\cdot)$. We define $\mathcal M(\T^{d};\R^{d})$ the set of Borel vector valued  measures $w$ with finite mass $|w|$. If $m_{t}$ is a time dependent probability measure on $\T^{d}$, then $L^{2}_{m}([0,T]\times\T^{d})$ is the set of $m$-measurable functions $f$ such that the integral of $|f|^{2}dm_{t}$ over $[0,T]\times\T^{d}$ is finite. Analogously for $L^{2}_{m}(\T^{d})$ and $L^{2}_{m}(\T^{d};\R^{d})$, where in the latter case we consider vector valued functions. 
 
 We use throughout the paper the notion of derivative for functions defined on $\mathcal P(\T^{d})$ introduced in \cite{cardaliaguet2015master}. We say that $\Phi:\mathcal P(\T^{d})\rightarrow \R$ is $C^{1}$ if there exists a continuous function $\frac{\delta \Phi}{\delta m}:\mathcal P(\T^{d})\times\T^{d}\rightarrow\R$ such that
 $$
\Phi(m_{1})-\Phi(m_{2})=\int_{0}^{1}\int_{\T^{d}} \frac{\delta \Phi}{\delta m}((1-t)m_{1}+tm_{2},x)(m_{2}-m_{1})(dx)dt,\qquad\forall m_{1},m_{2}\in\mathcal P(\T^{d}).
 $$
 As this derivative is defined up to an additive constant, we use the standard normalization
 $$
\int_{\T^{d}} \frac{\delta \Phi}{\delta m}(m,x)m(dx)=0.
 $$
 
 \textbf{Assumptions:} We impose the following assumptions on the hamiltonian $H$ and the coupling function $F$ so that we can derive uniform estimates on the solutions of the MFG system.
\begin{enumerate}
\item $H:\T^d\times\R^d\rightarrow\R$ is of class $C^2$, $p\mapsto D_{pp}H(x,p)$ is Lipschitz continuous, uniformly with respect to $x$. Moreover there exists $\bar C>0$ that verifies 
\be\label{1}
\bar C^{-1}I_d\leq D_{pp}H(x,p)\leq \bar CI_d, \quad\forall (x,p)\in\T^d\times\R^d
\ee
and $\theta\in(0,1)$, $C>0$ such that the following conditions hold true
\be\label{2}
|D_{xx}H(x,p)|\leq C(1+|p|)^{1+\theta},\quad|D_{x,p}H(x,p)|\leq C(1+|p|)^\theta,\quad\forall (x,p)\in\T^d\times\R^d.
\ee

\item $\mathcal F:\mathcal P(\T^{d})\rightarrow\R$ is of class $C^2$. Its derivative $F:\T^d\times\mathcal P(\T^d)\rightarrow\R$ is twice differentiable in $x$ and $D^2_{xx}F$ is bounded.
Examples of non monotone coupling functions which verify such conditions can be found in Lemma \ref{199} and Lemma \ref{200}. 

\end{enumerate}
We recall that, if $\mu$, $\nu\in\mathcal P(\T^d)$, the $1$-Wasserstein distance is defined by
\be\label{RKd2}
\bm d(\mu,\nu)= \sup \left\{ \left. \int_{\T^{d}} \phi(x) \,d (\mu - \nu) (x) \right| \mbox{continuous } \phi : \T^{d} \to \mathbb{R},\, \mathrm{Lip} (\phi) \leq 1 \right\}.
\ee

\textbf{Minimization Problems:} Under the above assumptions, we can introduce two minimization problems. Each one of those will be proposed in two different but equivalent forms. The first one is 
\be\label{VlI}
\mathcal U(T,m_0)=\inf_{(m,\alpha)}\int_0^T \int_{\T^d}H^*(x,\alpha)dm(t)+\mathcal F(m(t))dt,\quad m_0\in\mathcal P(\T^d)
\ee
where $m\in C^0([0,T],\mathcal P(\T^d))$, $\alpha\in L^2_{m}([0,T]\times\T^d,\R^d)$ and the following equation is verified in sense of distribution
\begin{equation}\label{fp}
\begin{cases}
-\partial_t m +\Laplace m+{\rm div}(m\alpha)=0&\mbox{in }[0,T]\times\T^{d}\\
m(0)=m_0&\mbox{in }\T^{d}.
\end{cases}
\end{equation}
Equivalently (see \cite{briani2016stable} for more details),

$$
\mathcal U(T,m_0)=\inf_{(m,w)\in\mathcal E^{T}_{2}(m_{0})}\int_0^T\int_{\T^d}H^*\left(x,-\frac{dw(t)}{dm(t)}(x)\right)dm(t)+\mathcal F(m(t))dt,\quad m_0\in\mathcal P(\T^d)
$$

where $\mathcal E^{T}_{2}(m_{0})$ is the set of time dependent Borel measures $(m(t),w(t))\in\mathcal P(\T^{d})\times\mathcal M(\T^{d};\R^{d})$ such that $m\in C^0([0,T],\mathcal P(\T^d))$, $w$ is absolutely continuous with respect to $m$, its density $dw/dm$ belongs to $L^{2}_{m}([0,T]\times\T^{d})$ and $-\partial_{t}m+\Laplace m-{\rm div}w=0$ is verified in sense of distributions with initial condition $m(0)=m_{0}$.

The second minimization problem reads
\be\label{Vl}
\mathcal V_\delta(m_0)=\inf_{(m,\alpha)}\int_0^\infty e^{-\delta t}\int_{\T^d}H^*(x,\alpha)dm(t)+\mathcal F(m(t))dt,\quad m_0\in\mathcal P(\T^d)
\ee
 where $\delta>0$,  $m\in C^0([0,+\infty),\mathcal P(\T^d))$, $\alpha\in L^2_{m,\delta}([0,+\infty)\times\T^d,\R^d)$, that is $L_{m}^{2}$ with weight $e^{-\delta t}$, and $(m,\alpha)$ verifies \eqref{fp} in $[0,+\infty)\times\T^{d}$.
 Equivalently, $\mathcal V_{\delta}$ can be defined as
 \be\label{discprob}
 \mathcal V_{\delta}(m_0)=\inf_{(m,w)\in\mathcal E^{\delta}_{2}(m_{0})}\int_0^{+\infty}e^{-\delta t} \int_{\T^d}H^*\left(x,-\frac{dw(t)}{dm(t)}(x)\right)dm(t)+\mathcal F(m(t))dt,\quad m_0\in\mathcal P(\T^d)
 \ee
 where $\mathcal E^{\delta}_{2}(m_{0})$ is defined as $\mathcal E_{2}(m_{0})$ with the only difference that we ask $dw/dm$ to be $L^{2}$ in $[0,+\infty)\times\T^{d}$ with respect to $e^{-\delta t}m(t)$.
 For convenience we introduce the functional on $\mathcal E_{2}(m_{0})$
 \be\label{Jfunct}
 J_{\delta}(m_{0},m,w)=\int_0^{+\infty}e^{-\delta t} \int_{\T^d}H^*\left(x,-\frac{dw(t)}{dm(t)}(x)\right)dm(t)+\mathcal F(m(t))dt,
 \ee
so that $\mathcal V_{\delta}(m_{0})=\inf_{(m,w)} J(m_{0},m,w)$.
 
We also define the ergodic value $\bar\lambda$ as follows
\be\label{elambda}
-\bar\lambda=\inf_{m,\alpha}\int_{\T^d}H^*(x,\alpha)dm+\mathcal F(m)
\ee
where $(m,\alpha)\in\mathcal P(\T^{d})\times L^{2}_{m}(\T^{d};\R^{d})$ verifies in sense of distribution $\Laplace m+{\rm div}(m\alpha)=0$ in $\T^{d}$.

Or, equivalently,
\be\label{staticfunc3}
-\bar\lambda=\inf_{(m,w)\in\mathcal E}\int_{\T^d}H^*\left(x,-\frac{dw}{dm}(x)\right)dm(x)+\mathcal F(m)
\ee
where $\mathcal E$ is the set of $(m,w)\in\mathcal P(\T^{d})\times\mathcal M(\T^{d};\R^{d})$ such that $w$ is absolutely continuous with respect to $m$, its density $dw/dm$ belongs to $L^{2}_{m}(\T^{d})$ and $\Laplace m-{\rm div}w=0$ is verified in sense of distributions.

Throughout the paper we will use the constant $C>0$ which may change from line to line.

\section{Ergodic limit value}

\subsection{Minimizers and dynamic programming principle for $\mathcal V_{\delta}$}
We start proving that the minimization problem \eqref{discprob} admits a minimizer and we also give a characterization of such a minimizer in terms of solutions of the associated MFG system.

\begin{prop}\label{exmin} For any $\delta> 0$ and any $m_{0}\in\mathcal P(\T^{d})$, $\mathcal V_{\delta}(m_{0})$ admits a minimizer $(m,w)$. Moreover there exists $u\in C^{1,2}([0,+\infty)\times\T^d) $ and $m\in C^0([0,+\infty)\times\mathcal P(\T^d))$ solutions of
\begin{equation}\label{psystem}
\begin{cases} 
-\partial_t u-\Laplace u+\delta u+H(x,Du)=\frac{\delta \mathcal F}{\delta m}(x,m):=F(x,m) & \mbox{in } \T^d\times[0,+\infty)\\
-\partial_t m+\Laplace m+{\rm div}(mD_pH(x,Du))=0 & \mbox{in }\T^d\times[0,+\infty)\\
m(0)=m_0,\; u\in L^{\infty}([0,+\infty)\times\T^{d})
\end{cases}
\end{equation}
such that $w=-mD_{p}H(x,Du)$.
\end{prop}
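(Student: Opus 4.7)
The plan is to proceed in two steps: first, establish existence of a minimizer of $\mathcal V_\delta(m_0)$ by the direct method of the calculus of variations; second, identify the minimizer with a solution of the discounted MFG system \eqref{psystem} by finite-horizon approximation.

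For existence, the functional $J_\delta$ is bounded below because the coercivity $D_{pp}H \geq \bar C^{-1} I_d$ yields $H^*(x,\alpha) \geq c|\alpha|^2 + \tilde c$ and $\mathcal F$ is continuous on the compact set $\mathcal P(\T^d)$, hence bounded; finiteness of $\mathcal V_\delta(m_0)$ follows by testing with $w\equiv 0$, for which $m$ is the heat flow from $m_0$. For a minimizing sequence $(m_n,w_n)\in\mathcal E^\delta_2(m_0)$, the energy estimate bounds $dw_n/dm_n$ in $L^2$ with weight $e^{-\delta t} m_n$ and, together with $m_n(t,\T^d)=1$, this bounds the total variation of $w_n$ on every finite time window. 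A diagonal extraction over $T_k\to\infty$, combined with the joint lower semicontinuity of $(m,w)\mapsto \int H^*(x,-dw/dm)\,dm$ (convex in $(m,w)$ via the usual Benamou--Brenier-type formulation) and the distributional stability of the Fokker--Planck constraint, produces a minimizer $(m,w)\in\mathcal E^\delta_2(m_0)$.

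For the identification with the MFG system, I would approximate $\mathcal V_\delta(m_0)$ by the finite-horizon discounted problems on $[0,T]$ with terminal cost zero. The variational theory recalled in \cite{briani2016stable} gives, for each $T$, smooth $u_T\in C^{1,2}$ and $m_T\in C^0$ solving the associated finite-horizon MFG system, with discounted HJB $-\partial_t u_T - \Laplace u_T + \delta u_T + H(x, Du_T) = F(x,m_T)$ and terminal condition $u_T(T,\cdot) = 0$, and with minimizer $(m_T, -m_T D_pH(x,Du_T))$. Applying the maximum principle to the HJB equation gives $\|u_T\|_\infty \leq \|F\|_\infty/\delta$ uniformly in $T$, and the assumptions \eqref{2} together with the boundedness of $D^2_{xx}F$ yield uniform semiconcavity and therefore uniform $C^{1,2}$ bounds for $u_T$ on every compact subset of $[0,\infty)\times\T^d$. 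Passing to a subsequential limit as $T\to\infty$ produces a pair $(u,m)$ solving \eqref{psystem}; since the finite-horizon values converge monotonically to $\mathcal V_\delta(m_0)$, the pair $(m,-mD_pH(x,Du))$ is itself a minimizer, and the identification $w=-mD_pH(x,Du)$ for the minimizer obtained in step one follows from the saturated Fenchel equality $H^*(x,\alpha)=-\alpha\cdot Du-H(x,Du)$ along an optimal trajectory.

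The main obstacle I anticipate is the uniform-in-$T$ gradient control, since on an infinite-horizon problem one cannot rely on terminal data: the bounds come from semiconcavity estimates whose uniformity rests squarely on the boundedness of $D^2_{xx}F$ and the conditions \eqref{2} on $D^2_{xx}H$ and $D^2_{xp}H$, consistent with the \emph{key assumption} flagged in the introduction.
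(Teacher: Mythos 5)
Your step one (existence of a minimizer by the direct method: coercivity of $H^*$ from \eqref{1}, boundedness of $\mathcal F$, the heat-flow competitor, total-variation bounds on $w_n$ on finite windows, diagonal extraction and lower semicontinuity) tracks the paper's argument essentially line by line. Step two, however, is a genuinely different route. The paper does \emph{not} go through a finite-horizon approximation inside Proposition~\ref{exmin}: instead it observes, by convexity of $H^*$ and $C^1$-regularity of $\mathcal F$, that a minimizer $(\bar m,\bar w)$ of the original (non-convex) problem also minimizes the \emph{linearized} convex functional
$\bar J_\delta(m,w)=\int_0^\infty e^{-\delta t}\bigl(\int H^*(x,-dw/dm)\,dm+\int F(x,\bar m(t))\,dm\bigr)dt$,
applies Fenchel--Rockafellar duality to this convex problem, identifies the dual optimizer $\bar u$ via the comparison principle as the bounded solution of the discounted HJB with data $F(x,\bar m)$, and extracts $\bar w=-\bar m D_pH(x,D\bar u)$ from the vanishing of the primal--dual gap. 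Your approach instead builds $(u,m)$ by solving finite-horizon discounted MFG systems with zero terminal condition and passing to the limit using uniform semiconcavity. The paper's route is shorter here because it defers the uniform-in-$T$ PDE estimates to Lemma~\ref{2est}; your route is more constructive and does that work up front, but it amounts to re-deriving in Proposition~\ref{exmin} what the paper keeps separate.

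Two small points to repair if you pursue your route. First, the claim that the finite-horizon values converge \emph{monotonically} to $\mathcal V_\delta(m_0)$ is false in general, since the running cost $H^*(x,\alpha)+\mathcal F(m)$ has no sign; what is true, and suffices, is that the tail $\int_T^\infty e^{-\delta t}(\cdots)\,dt$ is $O(e^{-\delta T})$ uniformly, so $\mathcal V_\delta^T(m_0)\to\mathcal V_\delta(m_0)$. Second, your closing sentence is circular: you invoke the Fenchel equality ``along an optimal trajectory'' to identify the minimizer from step one with $-mD_pH(x,Du)$, but $u$ is only produced in step two for a possibly different minimizer (the problem is not convex, so uniqueness is not available). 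This is harmless for the statement of the proposition, which asserts existence of \emph{one} minimizer carrying the MFG structure; you should simply drop the attempt to transfer the identification back to the step-one minimizer and instead note that the limit of finite-horizon minimizers is itself a minimizer (by value convergence plus lower semicontinuity), and that minimizer is the one paired with $u$.
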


\begin{proof} First of all we show that $\mathcal V_{\delta}(m_{0})$ is finite and that it is bounded by a constant $K_{\delta}$ independent of $m_{0}$. We can always use as competitor for $\mathcal V_{\delta}(m_{0})$ the couple $(\mu,0)$ where $\mu$ is the solution of 
\be\label{primo}
\begin{cases}
-\partial_{t}\mu+\Laplace\mu=0& \mbox{in } \T^d\times[0,+\infty)\\
\mu(0)=m_{0}& \mbox{in } \T^d.
\end{cases}
\ee
Given that $\mathcal F$ is bounded, if we use $(\mu,0)$ as a competitor, we get
$$
\mathcal V_{\delta}(m_{0})\leq\int_{0}^{+\infty}e^{-\delta t} H^{*}(x,0)+\sup_{m}\mathcal F(m)dt:=K_{\delta}.
$$ 

We fix a minimizing sequence $(m_{n},w_{n})$. If we use \eqref{1}, we have that
 
 $$\int_{0}^{+\infty}\int_{\T^{d}}e^{-\delta t}|w_{n}(t,x)|dxdt\leq M_{\delta}$$
 
  where $M_{\delta}$ is a fixed constant that does not depend on $m_{0}$ and $n$. Hence, for any fixed $k>0$ 
\be\label{wnbound}
\int_{0}^{k}\int_{\T^{d}}|w_{n}(t,x)|dxdt\leq M_{\delta}e^{\delta k}.
\ee

 Following Lemma 3.1 in \cite{cardaliaguet2015second}, we get that for any $t,s\in[0,k]$
\be\label{unifmn}
\bm d(m_{n}(t),m_{n}(s))\leq C^{k}_{\delta}|t-s|+C|t-s|^{1/2},
\ee
where $C^{k}_{\delta}$ depends only on $\delta$ and $k$. 
Inequality \eqref{unifmn} tells us that $\{m_{n}\}_{n}$ is uniformly bounded in $C^{1/2}([0,k),\mathcal P(\T^{d}))$. We have then that $m_{n}$ converges uniformly on any compact set to a limit $\bar m\in C^{0}([0,+\infty),\mathcal P(\T^{d}))$. Thanks to the bounds \eqref{wnbound} we also know that $w_{n}$ converges in $\mathcal M (I\times\T^{d};\R^{d})$ to a certain $\bar w$ on any bounded interval $I\subset [0,+\infty)$. 

As the couple $(\bar m,\bar w)$ belongs to $\mathcal E_{2}^{\delta}(m_{0})$ we have that $J_{\delta}(m_{0},\bar m,\bar w)<+\infty$. This means that

$$
J_{\delta}(m(0),\bar m,\bar w)=\lim_{k\rightarrow+\infty}\int_0^{k}e^{-\delta t} \int_{\T^d}H^*\left(x,-\frac{d\bar w(t)}{d\bar m(t)}(x)\right)d\bar m(t)+\mathcal F(\bar m(t))dt
$$


Note also that the functional is bounded from below, so there exists a constant $C_{\delta}$ such that, for any $(m,w)\in\mathcal E_{2}^{\delta}$,

$$
\int_k^{+\infty}e^{-\delta t} \int_{\T^d}H^*\left(x,-\frac{dw(t)}{dm(t)}(x)\right)dm(t)+\mathcal F(m(t))dt
$$
$$
\geq e^{-\delta k}\int_{0}^{+\infty}e^{-\delta t}\inf_{(x,p)\in\T^{d}\times\R^{d}} H^{*}(x,p)+\inf_{m}\mathcal F(m)dt=e^{-\delta k} C_{\delta}.
$$

Therefore,
$$
J_{\delta}(m_{0},m_{n},w_{n})\geq\int_0^{k}e^{-\delta t} \int_{\T^d}H^*\left(x,-\frac{d w_{n}(t)}{d m_{n}(t)}(x)\right)d m_{n}(t)+\mathcal F(m_{n}(t))dt+e^{-\delta k} C_{\delta}.
$$

Thanks to the convergence of $(m_{n},w_{n})$ on compact sets, we can pass to the limit in $n$ and we get
$$
\mathcal V_{\delta}(m_{0})\geq \int_0^{k}e^{-\delta t} \int_{\T^d}H^*\left(x,-\frac{d\bar w(t)}{d\bar m(t)}(x)\right)d\bar m(t)+\mathcal F(\bar m(t))dt+ C_{\delta}e^{-\delta k}.
$$

Taking the limit on $k$ we finally get that $\mathcal V_{\delta}(m_{0})\geq J(m_{0},\bar m,\bar w)$. 

The proof of the second statement relies again on classic tools for potential MFG (see for instance \cite{cardaliaguet2015second} or \cite{briani2016stable}). Using the convexity of $H^{*}$ and the regularity of $\mathcal F$, we can easily show that if $(\bar m,\bar w)$ is a minimizer for $\mathcal V_{\delta}(m_{0})$ then it must be a minimizer for $\bar J_{\delta}:\mathcal E_{2}^{\delta}(m_{0})\rightarrow \R$, which reads
$$
\bar J_{\delta}(m,w)= \int_0^{+\infty}e^{-\delta t} \int_{\T^d}H^*\left(x,-\frac{dw(t)}{dm(t)}(x)\right)dm(t)+F(x,\bar m(t))dm(t)dt.
$$

As $\bar J_{\delta}$ is convex we can define its dual problem (in the sense of Fenchel-Rockafellar)
$$
\inf_{u\in C_{b}^{2}([0,+\infty)\times\T^{d})}\left\{ -\int_{\T^{d}} u(0,x)dm_{0}(x)\mbox{ where}-\partial_{t}u-\Laplace u+\delta u+H(x,Du)\leq F(x,\bar m)\right\}.
$$

Thanks to the comparison principle, the minimizer $\bar u$ is the unique solution of $-\partial_{t}\bar u-\Laplace\bar u+\delta\bar u+H(x,D\bar u)=F(x,\bar m)$ in $C^{2}_{b}([0,+\infty)\times\T^{d})$.

If we sum the two problems we get
$$
\int_0^{+\infty}e^{-\delta t} \int_{\T^d}H^*\left(x,-\frac{d\bar w(t)}{d\bar m(t)}(x)\right)d\bar m(t)+F(x,\bar m(t))d\bar m(t)dt-\int_{\T^{d}} \phi(0,x)dm_{0}(x)=0
$$

Using the differential constraints of $(\bar m,\bar w)$ and $\bar u$ we get that $\bar w=-\bar m D_{p}H(x,D\bar u)$ $\bar m$-a.e..  This means that $\bar w$ is bounded that in turn implies that $\bar m>0$ so that $\bar w=-\bar m D_{p}H(x,D\bar u)$ is verified everywhere.

\end{proof}

We now state without proof the dynamic programming principle for $\mathcal V_{\delta}$. The proof relies on standard arguments in optimal control theory (see for instance \cite{cannarsa2004semiconcave}).

\begin{lemma}\label{Vddynamic}The function $\mathcal V_{\delta}$ verifies the dynamic programming principle, which reads
$$
\mathcal V_\delta(m_0)=\inf_{(m,\alpha)\in\mathcal E_{2}^{\delta}(m_{0})}\left(\int_0^t e^{-\delta s}\int_{\T^d}H^*(x,\alpha)dm(s)+\mathcal F(m(s))ds+e^{-\delta t}\mathcal V_\delta( m(t))\right).
$$
\end{lemma}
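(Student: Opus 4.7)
The plan is to prove the two inequalities separately, in the standard optimal control fashion.

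For the inequality $\mathcal V_\delta(m_0)\leq \text{RHS}$, I would fix any $(m_1,\alpha_1)\in\mathcal E_2^\delta(m_0)$ restricted to $[0,t]$ and, for a given $\epsilon>0$, choose an $\epsilon$-minimizer $(m_2,\alpha_2)\in\mathcal E_2^\delta(m_1(t))$ for $\mathcal V_\delta(m_1(t))$. I would then form the time-concatenation $(\hat m,\hat\alpha)$ defined by $(m_1(s),\alpha_1(s))$ on $[0,t]$ and by $(m_2(s-t),\alpha_2(s-t))$ on $[t,+\infty)$. Since $m_1(t)=m_2(0)$ and each piece satisfies $-\partial_s m+\Laplace m+\dive(m\alpha)=0$ in the sense of distributions on its own time interval, a routine test-function argument (split the time integral at $s=t$ and use the absence of a Dirac jump at the gluing point because of matching initial/terminal data) shows that the concatenation satisfies the Fokker--Planck equation on $[0,+\infty)$. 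Hence $(\hat m,\hat\alpha)\in\mathcal E_2^\delta(m_0)$ and, using the change of variable $\tau=s-t$ in the tail integral, its cost is
\[
\int_0^{t}e^{-\delta s}\!\int_{\T^d}H^*(x,\alpha_1)dm_1(s)+\mathcal F(m_1(s))\,ds+e^{-\delta t}\bigl(\mathcal V_\delta(m_1(t))+\epsilon\bigr).
\]
Taking the infimum over $(m_1,\alpha_1)$ and letting $\epsilon\to 0^+$ gives the desired bound.

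For the reverse inequality $\mathcal V_\delta(m_0)\geq \text{RHS}$, I would invoke Proposition \ref{exmin} to obtain a minimizer $(\bar m,\bar\alpha)\in\mathcal E_2^\delta(m_0)$ (here $\bar\alpha$ stands for the density $-d\bar w/d\bar m$). Splitting the integral at $s=t$,
\[
\mathcal V_\delta(m_0)=\int_0^{t}e^{-\delta s}\!\int_{\T^d}H^*(x,\bar\alpha)d\bar m(s)+\mathcal F(\bar m(s))\,ds+\int_{t}^{+\infty}e^{-\delta s}\!\int_{\T^d}H^*(x,\bar\alpha)d\bar m(s)+\mathcal F(\bar m(s))\,ds,
\]
and the substitution $\tau=s-t$ in the second integral produces
\[
e^{-\delta t}\!\int_0^{+\infty}\!e^{-\delta\tau}\!\int_{\T^d}H^*(x,\tilde\alpha)d\tilde m(\tau)+\mathcal F(\tilde m(\tau))\,d\tau,
\]
where $\tilde m(\tau):=\bar m(t+\tau)$ and $\tilde\alpha(\tau):=\bar\alpha(t+\tau)$. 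Time-translation invariance of the Fokker--Planck equation shows $(\tilde m,\tilde\alpha)\in\mathcal E_2^\delta(\bar m(t))$, so that the second term is bounded below by $e^{-\delta t}\mathcal V_\delta(\bar m(t))$. Combining, we get
\[
\mathcal V_\delta(m_0)\geq \int_0^{t}e^{-\delta s}\!\int_{\T^d}H^*(x,\bar\alpha)d\bar m(s)+\mathcal F(\bar m(s))\,ds+e^{-\delta t}\mathcal V_\delta(\bar m(t))\geq \text{RHS}.
\]

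The only non-cosmetic point is verifying that the concatenated pair lies in $\mathcal E_2^\delta(m_0)$: the Fokker--Planck equation on the junction, the $L^2_{m,\delta}$ bound on $\hat\alpha$ (which is automatic since the two pieces are bounded in their respective norms and the weight $e^{-\delta s}$ makes both contributions finite), and the continuity of $\hat m$ at $s=t$ (which follows from $m_1(t)=m_2(0)$ together with the continuity assertion of each piece). Everything else is a mechanical manipulation of the cost functional.
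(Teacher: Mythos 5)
Your proof is correct. The paper deliberately omits the argument (it cites standard optimal-control references instead), and what you have written is exactly that standard argument: the $\leq$ direction by concatenating an arbitrary admissible pair on $[0,t]$ with a near-optimal pair for $\mathcal V_\delta(m_1(t))$ on $[t,+\infty)$, and the $\geq$ direction by restricting the minimizer from Proposition \ref{exmin} to $[0,t]$ and time-translating its tail, so there is nothing to compare against.
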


\subsection{Existence of a corrector} 
The main result of this section is Theorem \ref{Ucont} where show that the function $\mathcal V_{\delta}(\cdot)$ is uniformly Lipschitz with respect to $\delta$. As a consequence, we have Proposition \ref{unifconv} which claims on one side that the limit $\lim_{\delta\rightarrow0}\delta\mathcal V_{\delta}(m_{0})$ is well defined and it is uniform in $m_{0}$ and, on the other, that, up to subsequence, also $\mathcal V_{\delta}(\cdot)-\mathcal V_{\delta}(m_{0})$ converges to a continuous function $\chi$. 
In Lemma \ref{lem.1.7} we prove that $\chi$ enjoys the dynamic programming principle and, therefore, we have the existence of a corrector function.

The idea behind the proof of Theorem \ref{Ucont} is the following: we want to prove that there exists a constant $\bar K>0$ independent of $\delta$, such that
$$
|\mathcal V_\delta(m_2^0)-\mathcal V_\delta(m_1^0)|\leq \bar K\bm d(m_1^0,m_2^0).
$$
We fix an horizon $T>0$, to be chosen later, and we take $(m_1(\cdot),\alpha_1(\cdot))$ a minimizer for $\mathcal V_\delta(m_1^0)$. We consider any couple $(m_2,\alpha_2)$ such that \eqref{fp} is verified in $[0,T]$, $m_2(0)=m_2^0$, $m_2(T)=m_1(T)$ and $m_2\equiv m_1$, $\alpha_2\equiv \alpha_1$ in $[T,\infty)$. The couple $(m_2,\alpha_2)$ is admissible and 
\be\label{dpp}
\mathcal V_\delta(m_2^0)\leq  \int_0^T e^{-\delta s}\int_{\T^d}H^*(x,\alpha_2)dm_2(s)+\mathcal F(m_2(s))ds+e^{-\delta T}V_\delta(m_1(T)).
\ee
Therefore,
$$
\mathcal V_\delta(m_2^0)-\mathcal V_\delta(m_1^0)\leq \int_0^T e^{-\delta t}\int_{\T^d}H^*(x,\alpha_2)dm_2(t)-H^*(x,\alpha_1)dm_1(t)+\mathcal F(m_2(t))-\mathcal F(m_1(t))dt.
$$

In order to prove the continuity of $\mathcal V_\delta$ with respect to the initial data we need to introduce some standard estimates on the solutions of the MFG system \eqref{psystem}.

\begin{lemma}\label{2est} There exists $C>0$ independent of $m_0,T,\delta$ such that, if $(u,m)$ is a classical solution of \eqref{psystem}, then

\begin{itemize}
\item $\Vert D u\Vert_{L^\infty([0,+\infty)\times\T^d)}\leq C$
\item $|D^2u(s,\cdot)|\leq C$ for any $s\in[0,+\infty)$
\item $\bm d(m(s),m(l))\leq C |l-s|^{1/2}$ for any $l,s\in [0,+\infty)$
\end{itemize}

Consequently, we also have that $|\partial_{t}u(s.\cdot)|\leq C$ for any $s\in[0,+\infty)$.
\end{lemma}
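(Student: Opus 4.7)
The plan is to establish the three bulleted estimates in sequence, using standard parabolic tools for second-order MFG systems developed in \cite{cardaliaguet2015second} and \cite{briani2016stable}, and then to deduce the bound on $\partial_t u$ directly from the equation.

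I would first note that, although $\|u\|_{L^\infty}$ is not uniformly bounded in $\delta$, the product $\delta\|u\|_{L^\infty}$ is: since $F(\cdot,m)$ is uniformly bounded (by the $C^2$-regularity of $\mathcal F$ together with the compactness of $\mathcal P(\T^d)$) and $H(\cdot,0)$ is bounded on $\T^d$, the constants $\pm C/\delta$ are respectively super- and sub-solutions of the HJB for $C$ large enough, and the comparison principle yields $\delta\|u\|_{L^\infty}\leq C$. This is the only step where the discount term is exploited in an essential way.

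For the Lipschitz bound---the main technical step---I would apply the Bernstein method. Differentiating the HJB in $x_k$, multiplying by $u_{x_k}$ and summing, $w=\tfrac12|Du|^2$ satisfies the parabolic equation
\[
-\partial_t w-\Delta w+|D^2u|^2+2\delta w+D_pH\cdot Dw+Du\cdot D_xH=Du\cdot D_xF.
\]
Working on a finite horizon $[0,T]$ and passing to the limit $T\to\infty$, at an interior maximum of $w$ one has $Dw=0$ and $\Delta w\leq 0$, and the resulting inequality can be closed using the sub-quadratic growth \eqref{2} of $D_xH$---where the hypothesis $\theta<1$ is sharply used---the uniform boundedness of $D_xF$ (a consequence of $\mathcal F\in C^2(\mathcal P(\T^d))$), the uniform convexity \eqref{1} of $H$ in $p$, and the observation that $Du$ lies in the kernel of $D^2u$ at the extremum. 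This yields $\|Du\|_{L^\infty}\leq C$, uniformly in $\delta$ and in the horizon.

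The remaining bounds then cascade. For the second-order estimate, I would rewrite the HJB as a linear heat equation $-\partial_t u-\Delta u=G$ with $G=F(x,m)-H(x,Du)-\delta u$ now uniformly bounded, and apply the maximum principle to $\langle D^2u\,\xi,\xi\rangle$ after differentiating the HJB twice in the direction $\xi$: the boundedness of $D^2_{xx}F$ from assumption~2, the growth condition \eqref{2}, and the Lipschitz bound from the previous step give $\|D^2u\|_{L^\infty}\leq C$. For the $1/2$-Hölder estimate on $m$ in Wasserstein distance, the drift $-D_pH(x,Du)$ of the Fokker--Planck equation is now uniformly bounded, so Lemma~3.1 of \cite{cardaliaguet2015second} applies verbatim. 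Finally, isolating $\partial_t u$ in the HJB and bounding each of the terms $\Delta u$, $\delta u$, $H(x,Du)$, $F(x,m)$ by the previous estimates yields $\|\partial_t u\|_{L^\infty}\leq C$. The principal technical hurdle is the Bernstein step; once it is in hand, all other bounds follow from standard parabolic regularity.
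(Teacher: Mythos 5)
Your proposal differs from the paper at the crucial first step, and I believe the Bernstein argument as you describe it has a genuine gap. The paper does not bound $Du$ first and then $D^2u$; it estimates the \emph{semiconcavity constant} $M=\sup_{t,x,|\xi|\leq 1}D^2u(t,x)\xi\cdot\xi$ first (differentiating the HJB \emph{twice} in a fixed direction $\xi$), and only then obtains the gradient bound from the periodic inequality $\Vert D\phi\Vert_{L^\infty}\leq d^{1/2}\sup_{x,|\xi|\leq 1}D^2\phi(x)\xi\cdot\xi$. The estimate closes because the twice-differentiated HJB produces the term $D_{pp}H\,D^2u\xi\cdot D^2u\xi\geq\bar C^{-1}|D^2u\xi|^2\geq\bar C^{-1}M^2$, a genuinely quadratic positive quantity in $M$, which dominates the negative contributions of orders $(1+M)^{1+\theta}$ and $(1+M)^{2\theta}$ (after substituting $|Du|\leq \sqrt{d}\,M$), and both exponents are below $2$ since $\theta<1$.

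Your Bernstein computation for $w=\tfrac12|Du|^2$ yields, at an interior maximum,
\begin{equation}
|D^2u|^2+2\delta w\;\leq\; Du\cdot D_xF-Du\cdot D_xH\;\leq\;C(1+|Du|)^{2+\theta},
\end{equation}
and this does \emph{not} close. The positive term $|D^2u|^2$ comes from the Laplacian, not from the uniform convexity of $H$, and it carries no pointwise lower bound in terms of $|Du|$ at the extremum. Indeed the first-order condition $Dw=D^2u\,Du=0$ says $Du$ lies in the kernel of $D^2u$ there, so $D^2u$ may well be small precisely where $|Du|$ is maximal; your kernel observation works against you rather than helping. The term $2\delta w$ degenerates as $\delta\to 0$, so it cannot supply a $\delta$-uniform bound. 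And the ``uniform convexity of $H$'' you invoke simply does not appear in the once-differentiated Bernstein identity: the coercive quadratic term $D_{pp}H\,D^2u\xi\cdot D^2u\xi$ arises only after a second differentiation, which is exactly the paper's route. Your preliminary bound $\delta\Vert u\Vert_\infty\leq C$ is correct but plays no role in the paper's argument; once the semiconcavity bound $M\leq C$ is in hand, the paper goes to the gradient bound via the periodic inequality, then to $D^2u$ via Schauder/Ladyzhenskaya, then to the Wasserstein--H\"older bound on $m$ from the bounded drift, and finally to $\partial_t u$ from the equation, which is the cascade you describe for the tail of the argument.
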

\begin{proof}
The proof follows closely the one proposed in \cite{cardaliaguet2017long} and it relies on semiconcavity estimates for the value function $u$. We recall that if $\phi\in C^\infty(\T^d)$ then 
\be\label{uine}
\Vert D\phi\Vert_{L^\infty(\T^d)}\leq d^\frac{1}{2}\sup_{x\in\T^d,\,|\xi|\leq 1} D^2\phi(x)\xi\cdot\xi.
\ee

We first prove the result for $u^{T}:[0,T]\times\T^{d}\rightarrow\R$, solution of 
$$
\begin{cases} 
-\partial_t u-\Laplace u+\delta u+H(x,Du)=F(x,m) & \mbox{in } \T^d\times[0,T]\\
-\partial_t m+\Laplace m+{\rm div}(mD_pH(x,Du))=0 & \mbox{in }\T^d\times[0,T]\\
m(t)=m_0,\; u(T,x)=0&\mbox{in }\T^{d}
\end{cases}
$$
We consider $\xi\in\R^d$, $|\xi|\leq 1$, that maximizes $\sup_{t,x}D^2u^{T}(t,x)\xi\cdot\xi=M$ and we look at the equation solved by $w(t,x)=D^2u^{T}(t,x)\xi\cdot\xi$ deriving twice in space the HJB equation in \eqref{psystem}:
$$
-\partial_t w-\Laplace w +\delta w+D_{\xi\xi}H(x,Du)+2D_{\xi p}H(x,Du)\cdot D^2u\xi
$$
\be\label{D2eq1}
+D_{pp}H(x,Du)D^2u\xi\cdot D^2u\xi+D_pH(x,Du)\cdot Dw=D^2_{\xi\xi}F(x,m).
\ee
The maximum of $w$ can be achieved either at $t=T$, but using the terminal condition of $u^{T}$ we get $M=0$, or at a point $(s,x)$ in the interior. In this case, if we use hypothesis  \eqref{1} and \eqref{2} on $H$ and the boundedness of $D_{xx}^2F$, then, at the maximum $(s,x)$, we get the following inequality
$$
\delta M-C(1+|Du|)^{1+\theta}-2C(1+|Du|)^{\theta}|D^2u\xi|+\bar C^{-1}|D^2u\xi|^2\leq C.
$$
By Cauchy-Schwarz inequality we have that $M=w(s,x)\leq|D^2u(s,x)\xi|$. If we also plug \eqref{uine} we get, for a possible different constant C
$$
-C(1+M)^{1+\theta}-2C(1+M)^{2\theta}+\bar C^{-1}M^2\leq C.
$$

Given that $\theta<1$ the above inequality ensures that $M$ is bounded by a constant that does not depend on $m_0$, $\delta$ and $T$. 
The bound on $\Vert D u^{T}\Vert_{L^\infty([0,T]\times\T^d)}$ follows from \eqref{uine}. Now that we proved that $Du^{T}$ is bounded so that Theorem V 5.4 in \cite{ladyzhenskaia1988linear} gives us the boundedness of $D^2u^{T}$. Note that the estimates on $Du^{T}$ and $D^{2}u^{T}$ imply directly from the HJB equation that $\partial_{t}u^{T}$ is bounded as well. As all the estimates are independent of $T$, if we look at the sequence of $u^{T}$ we have that, on any compact set, $u^{T}$ is uniformly bounded and continuous. This means that $u^{T}$ converges to $u$ solution of the HJB equation on $[0,+\infty)$ and the same estimates hold true for $u$.

Furthermore, it implies that also $D_pH(x,Du)$, the drift of the Fokker Planck equation, is uniformly bounded. Standard results on SDEs (for instance Lemma 3.4 in \cite{cardaliaguet2010notes}) ensure the Holder continuity of $s\mapsto m(s)$ uniformly with respect to $m_0$ and $\delta$.
\end{proof}

 We now fix $m_1^0$, $m_2^0\in\mathcal P(\T^d)$. According to \eqref{dpp} we have
\begin{flalign}
\mathcal V_\delta(m_2^0)-\mathcal V_\delta(m_1^0)\leq
\end{flalign}
\be\label{minU} \int_0^{T} e^{-\delta s}\int_{\T^d}H^*(x,\alpha_2)dm_2(s)-H^*(x,DH(x,Du_1))dm_1(s)+\mathcal F(m_2(s))-\mathcal F(m_1(s))ds,
\ee

where $(u_1,m_1)$ is a solution of \eqref{psystem} related to a minimizer $(m_{1},w_{1})$ of $\mathcal V_{\delta}(m_{1}^{0})$ that we found in Proposition \ref{exmin}. The couple $(m_2,\alpha_2)$ is such that \eqref{fp} is verified in $[0,h+\tau]$ with $m_2(0)=m_2^0$ and $m_2(T)=m_1(T)$. The key point to prove the continuity of $\mathcal V_\delta$ is to construct a suitable $(m_2,\alpha_2)$. We first consider $\tilde m_2$ solution of 
\begin{equation}\label{tildefp}
\begin{cases}
-\partial_t \tilde m_2 +\Laplace \tilde m_2+{\rm div}(\tilde m_2D_pH(x,Du_1))=0 & \mbox{in }\,[0,T]\times\T^d\\
\tilde m_2(0)=m_2^0
\end{cases}
\end{equation}
and then we set
\be\label{m2def}
m_2(s,x)=\begin{cases} 
\tilde m_2(s,x), & \mbox{if }s\in(0,h] \\ 
\frac{\tau+h-s}{\tau}\tilde m_2(s,x)+\frac{s-h}{\tau}m_1(s,x), & \mbox{if }s\in[h,h+\tau]\\
m_1(s,x) & \mbox{if } s\in[\tau+h,T],
\end{cases}
\ee
where $h$ and $\tau$ will be chosen later. Note that, thanks to the boundedness of their drifts, Corollary 6.3.2 in \cite{bogachev2015fokker} ensures that $\tilde m_2$ and $m_1$ have a density for any $s>0$ . What we still need is to define $\alpha_2$ in $[h,h+\tau]$ . We compute the equation verified by $m_2$ in $[h,h+\tau]$ and, using \eqref{psystem} and \eqref{tildefp}, we get
$$
\partial_{t} m_2-\Laplace m_2=\frac{\tau+h-s}{\tau}{\rm div}\left(\tilde m_2 D_pH(x,Du_1)\right)+\frac{s-h}{\tau}{\rm div}\left(m_1 D_pH(x,Du_1)\right)+\frac{m_1-\tilde m_2}{\tau}
$$
that is, by linearity,
$$
\partial_{t} m_2-\Laplace m_2={\rm div}(m_2 D_pH(x,Du_1))+\frac{m_1-\tilde m_2}{\tau}.
$$
Let $\zeta:[0,h+\tau]\times\T^d\rightarrow\R$ be the solution to 
\be\label{pe}
\begin{cases}
\Laplace\zeta=m_1-\tilde m_2, & \mbox{in }[0,h+\tau]\times\T^d\\
\int_{\T^d}\zeta(s,x)=0.
\end{cases}
\ee
We can now define the drift $\alpha_2$ as follows:  $\alpha_2=D_pH(x,Du_1)+\frac{D\zeta}{m_2\,\tau}$ in $[h,h+\tau]$ and $\alpha_2=D_pH(x,Du_1)$ elsewhere. As $(m_2,\alpha_2)$ verifies the Fokker Plank equation by construction with $m_{2}(0)=m_{2}^{0}$, it is admissible. For the continuity of $\mathcal V_\delta$ we still need estimates on the drift $\alpha_2$. We prove those estimates in the next lemma using the regularity of the solutions of the adjoint of the Fokker-Plank equation.
\begin{lemma}\label{duest}
For any time $s<h+\tau$, there exists a constant $K_s>0$, bounded for $s>\varepsilon>0$, such that
$$
\Vert D\zeta(s,\cdot)\Vert_{L^2(\T^d)}\leq K_s \bm d(m_1^0,m_2^0).
$$
The constant $K_{s}$ is independent of $m_1^0$, $m_2^0$.
\end{lemma}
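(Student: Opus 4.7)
The plan is to exploit the linearity of the Fokker--Planck equation via a duality argument with its adjoint. Observe first that $m_1$ and $\tilde m_2$ satisfy the same linear Fokker--Planck equation with drift $D_pH(x,Du_1)$, differing only through their initial data. Setting $\mu(t):=m_1(t)-\tilde m_2(t)$, $\mu$ solves the same linear equation with $\mu(0)=m_1^0-m_2^0$. Fix $s\in(0,h+\tau]$; since $\Laplace\zeta(s,\cdot)=\mu(s)$ and $\int_{\T^d}\zeta(s)=0$, integrating by parts on $\T^d$ yields
$$
\|D\zeta(s,\cdot)\|_{L^2(\T^d)}^2=-\int_{\T^d}\zeta(s,x)\,d\mu(s,x).
$$

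Next, introduce the dual variable $\phi:[0,s]\times\T^d\to\R$ as the solution of the backward linear problem
$$
\begin{cases}
-\partial_t\phi-\Laplace\phi+D_pH(x,Du_1)\cdot D\phi=0 & \mbox{in }[0,s)\times\T^d,\\
\phi(s,\cdot)=-\zeta(s,\cdot). &
\end{cases}
$$
Reversing time produces a standard parabolic equation with a bounded and spatially Lipschitz drift, uniformly in $t$, thanks to Lemma \ref{2est}; hence $\phi$ is well defined and smooth on $[0,s)\times\T^d$. A direct computation integrating by parts in space shows that $t\mapsto\int_{\T^d}\phi(t,x)\,d\mu(t,x)$ is constant on $[0,s]$. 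Combining this with the previous identity and the Kantorovich--Rubinstein formula \eqref{RKd2} gives
$$
\|D\zeta(s,\cdot)\|_{L^2}^2=\int_{\T^d}\phi(0,x)\,d(m_1^0-m_2^0)(x)\leq \|D\phi(0,\cdot)\|_{L^\infty(\T^d)}\,\bm d(m_1^0,m_2^0).
$$

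The final and hardest step is a parabolic smoothing estimate of the form $\|D\phi(0,\cdot)\|_{L^\infty(\T^d)}\leq \tilde K_s\|\zeta(s,\cdot)\|_{L^2(\T^d)}$, with $\tilde K_s$ bounded on every interval $\{s>\varepsilon\}$ but possibly blowing up as $s\to0^+$. This follows from the standard parabolic $L^p$--$L^\infty$ theory applied to the time-reversed equation: a bounded Lipschitz drift allows Duhamel comparison with the heat semigroup on $\T^d$, and the Gaussian kernel gives an estimate of the form $\|D e^{s\Laplace}f\|_{L^\infty}\lesssim s^{-1/2-d/4}\|f\|_{L^2}$; crucially, the resulting constant depends only on $s$ and on the uniform bounds for $Du_1,D^2u_1$ supplied by Lemma \ref{2est}, hence is independent of $m_1^0,m_2^0$ and $\delta$. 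Combining with Poincar\'e's inequality $\|\zeta(s)\|_{L^2}\leq C\|D\zeta(s)\|_{L^2}$ (valid because $\int\zeta(s)=0$) produces
$$
\|D\zeta(s,\cdot)\|_{L^2}^2\leq K_s\,\|D\zeta(s,\cdot)\|_{L^2}\,\bm d(m_1^0,m_2^0),
$$
from which the claim follows. The main obstacle is precisely in isolating a clean parabolic smoothing constant that depends only on $s$ and on the universal bounds of Lemma \ref{2est}; everything else is a routine duality/Kantorovich--Rubinstein argument once the test function is chosen.
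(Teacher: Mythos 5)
Your proof is correct and follows essentially the same route as the paper: both use duality with the adjoint backward parabolic equation, a parabolic smoothing estimate to control $\|D\phi(0)\|_{L^\infty}$ by the $L^2$ norm of the terminal datum, the Kantorovich--Rubinstein formula, and an elliptic/Poincar\'e bound for $\zeta$. The only difference is cosmetic: you test directly against the specific terminal datum $-\zeta(s,\cdot)$ and then close the estimate by dividing by $\|D\zeta(s)\|_{L^2}$, whereas the paper first takes a supremum over all $\bar\phi$ with $\|\bar\phi\|_{L^2}\le1$ to bound $\|m_1(s)-\tilde m_2(s)\|_{L^2}$ and then applies the elliptic estimate for $\zeta$; the two are interchangeable.
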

\begin{proof} We first note that, if we multiply \eqref{pe} by $\zeta$ and we use Cauchy-Schwarz inequality on the right hand side, we get
$$
\Vert D\zeta(s)\Vert^{2}_{L^2(\T^{d})}\leq\Vert m_1(s)- \tilde m_2(s)\Vert_{L^{2}(\T^{d})}\Vert \zeta(s)\Vert_{L^2(\T^{d})}.
$$
Now Pointcar\'e-Wirtinger inequality gives us
\be\label{h1emb}
\Vert D\zeta(s)\Vert_{L^2(\T^{d})}\leq C\Vert m_1(s)- \tilde m_2(s)\Vert_{L^{2}(\T^{d})}.
\ee
 
 If we define $\mu(s)=m_1(s)- \tilde m_2(s)$ then $\mu$ verifies the following equation
\be\label{fore}
\begin{cases}
-\partial_t \mu +\Laplace \mu+{\rm div}(\mu D_pH(x,Du_1))=0 & \mbox{in }\,[0,s]\times\T^d\\
\mu(0)=m_1^0-m_2^0&\mbox{in }\T^{d}.
\end{cases}
\ee

We now fix a $\bar\phi\in L^2(\T^d)$ and we consider the adjoint backward equation
\be\label{back}
\begin{cases}
-\partial_t\phi-\Laplace\phi+D_pH(x,Du_1)D\phi=0 & \mbox{in }[0,s]\times\T^{d}\\
\phi(s,x)=\bar\phi(x)&\mbox{in }\T^{d}.
\end{cases}
\ee
Given that $D_pH(x,Du)$ is bounded, if $\phi$ is the solution of \eqref{back}, then there exists a constant $K_s$ (Theorem 11.1 in \cite{ladyzhenskaia1988linear}), such that
\be\label{inte}
\Vert\phi(0)\Vert_{C^{1+\alpha}(\T^d)}\leq K_s \Vert \bar\phi\Vert_{L^2(\T^d)}.
\ee

As the equation \eqref{back} is the adjoint of \eqref{fore},
$$
\int_{\T^d}\phi(s)\mu(s)dx=\int_{\T^d}\phi(0)\mu(0)dx.
$$
We now plug in the initial and terminal conditions and we estimate the righthand side as follows
\be\label{prees}
\int_{\T^d}\bar\phi(x)(m_1(s)(dx)-\bar m_2(s)(dx))=\int_{\T^d}\phi(0,x)(m_1^0(dx)-m_2^0(dx))\leq \Vert D\phi(0)\Vert_{L^\infty}\bm d(m_1^0,m_2^0).
\ee
If we use the interior estimate \eqref{inte} on the righthand side and we take the supremum over $\Vert \bar\phi\Vert_{L^2}\leq 1$, we finally end up with
$$
\Vert m_1(s)-\tilde m_2(s)\Vert_{L^2}\leq K_s\bm d(m_1^0,m_2^0).
$$
If we plug the last inequality into \eqref{h1emb}, we get the result.
\end{proof}

\begin{teo}\label{Ucont}The family of functions $\{\mathcal V_\delta(\cdot)\}_{\delta}$ is uniformly Lipschitz continuous.
\end{teo}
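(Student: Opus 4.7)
The plan is to turn the inequality \eqref{minU} already set up in the text into a bound $\mathcal V_\delta(m_2^0)-\mathcal V_\delta(m_1^0)\le \bar K\,\bm d(m_1^0,m_2^0)$ with $\bar K$ independent of $\delta$; exchanging the roles of $m_1^0$ and $m_2^0$ then gives the uniform Lipschitz estimate. I fix once and for all two positive constants $h$ and $\tau$, independent of $\delta$, $m_1^0$, $m_2^0$, and take $T=h+\tau$, so that the integrand in \eqref{minU} vanishes on $[h+\tau,T]$ and it suffices to estimate the two pieces corresponding to $[0,h]$ and $[h,h+\tau]$.

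On $[0,h]$ the competitor satisfies $m_2=\tilde m_2$ and $\alpha_2=D_pH(x,Du_1)$, so the contribution reduces to the time integral of
$$
\int_{\T^d}H^*(x,D_pH(x,Du_1))\,d(\tilde m_2-m_1)(s)+\mathcal F(\tilde m_2(s))-\mathcal F(m_1(s)).
$$
By Lemma \ref{2est} the map $x\mapsto H^*(x,D_pH(x,Du_1(s,x)))$ is Lipschitz in $x$ with a constant depending only on the uniform bounds on $Du_1, D^2u_1$ and on $H$, and $\mathcal F$ is $C^1$ on $\mathcal P(\T^d)$, so each piece is bounded by $C\,\bm d(\tilde m_2(s),m_1(s))$. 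Since $\tilde m_2$ and $m_1$ solve the same Fokker--Planck equation with uniformly bounded, uniformly Lipschitz drift, the adjoint argument used in the proof of Lemma \ref{duest}, applied with $1$-Lipschitz terminal data, yields $\bm d(\tilde m_2(s),m_1(s))\le C\,\bm d(m_1^0,m_2^0)$ on $[0,h]$, and this piece contributes at most $Ch\,\bm d(m_1^0,m_2^0)$.

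On $[h,h+\tau]$ we have $\alpha_2=D_pH(x,Du_1)+D\zeta/(m_2\tau)$. Using $D_pH^*(x,D_pH(x,Du_1))=Du_1$ together with the bound $|D_{pp}H^*|\le \bar C$ coming from \eqref{1}, a second order Taylor expansion gives
$$
H^*(x,\alpha_2)\le H^*(x,D_pH(x,Du_1))+Du_1\cdot \frac{D\zeta}{m_2\tau}+C\,\frac{|D\zeta|^2}{(m_2\tau)^2}.
$$
Integrating against $dm_2(s)$ produces: a first term handled exactly as on $[0,h]$, using $\bm d(m_2(s),m_1(s))\le \bm d(\tilde m_2(s),m_1(s))$ since $m_2$ is a convex combination of $\tilde m_2$ and $m_1$; a linear cross term equal to $\tau^{-1}\int_{\T^d} Du_1\cdot D\zeta\,dx$, bounded by Cauchy--Schwarz and Lemma \ref{duest} as $CK\tau^{-1}\bm d(m_1^0,m_2^0)$, with $K=\sup_{s\in[h,h+\tau]}K_s<\infty$; and a quadratic term $\tau^{-2}\|D\zeta(s)\|_{L^2}^2\le K^2\tau^{-2}\bm d(m_1^0,m_2^0)^2$. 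After integrating in $s$ over $[h,h+\tau]$ and adding the $\mathcal F$-difference (bounded by $C\tau\,\bm d$), all contributions are linear in $\bm d(m_1^0,m_2^0)$ apart from the quadratic one, which collapses to a linear term thanks to the a priori bound $\bm d(m_1^0,m_2^0)\le\mathrm{diam}(\T^d)$.

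The main obstacle is the uniformity of $\bar K$ in $\delta$, and this is secured entirely by Lemma \ref{2est}: its constants do not depend on $\delta$, so the drift $D_pH(x,Du_1)$ and its Lipschitz constant are under uniform control, which makes the constant $K_s$ of Lemma \ref{duest} — depending only on $s$ and on the drift — uniform in $\delta$ on $[h,h+\tau]$; the discount factor contributes only $e^{-\delta s}\le 1$. The strict inequality $h>0$ is essential because $K_s$ blows up as $s\downarrow 0$, while $\tau>0$ may be taken as any fixed positive number; once $h$ and $\tau$ are frozen the resulting constant $\bar K$ depends only on the data of the problem, and the asserted uniform Lipschitz continuity follows by symmetry in $m_1^0,m_2^0$.
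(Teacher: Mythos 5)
Your plan follows the paper's own strategy quite closely: same competitor $(m_2,\alpha_2)$, same horizon $T=h+\tau$ split into $[0,h]$ (drift coincides) and $[h,h+\tau]$ (correction via $\zeta$), same invocation of Lemma \ref{2est} for $\delta$-uniformity and of Lemma \ref{duest} for $\|D\zeta\|_{L^2}$; you also correctly exploit linearity of the Kantorovich--Rubinstein distance to replace the paper's Gronwall/SDE coupling on $[h,h+\tau]$ by the elementary observation $\bm d(m_2(s),m_1(s))\leq\bm d(\tilde m_2(s),m_1(s))$, and you use a clean second-order Taylor expansion of $H^*$ in place of the paper's convexity manipulation. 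These are genuine but small simplifications, and I believe they are sound.

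However, there is one concrete gap in the estimate of the quadratic term. From your Taylor expansion, the remainder is $C\,|D\zeta|^2/(m_2\tau)^2$, and after integrating against $dm_2(s)=m_2(s,x)\,dx$ this becomes $\int_{\T^d} C|D\zeta(s)|^2/(m_2(s)\tau^2)\,dx$, \emph{not} $\tau^{-2}\|D\zeta(s)\|_{L^2}^2$. To pass from the former to something controllable by $\|D\zeta\|_{L^2}^2$ you need a positive lower bound on $m_2(s,\cdot)$ on $[h,h+\tau]$, uniform in $\delta,m_1^0,m_2^0$. This is precisely what the paper establishes via the parabolic Harnack-type estimate (Theorem 2.5.1 in \cite{bogachev2009elliptic}), yielding $m_2(s,x)>\tfrac12 e^{-Q(1+4/h)}$ with $Q$ depending only on the drift bound from Lemma \ref{2est}. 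Without that lower bound, $1/m_2$ is uncontrolled and the quadratic term cannot be bounded in $L^2$. The same issue contaminates the cross term: you bound $\tau^{-1}\int Du_1\cdot D\zeta\,dx$ by Cauchy--Schwarz, but that term arises from $\int Du_1\cdot D\zeta/(m_2\tau)\,dm_2$, which only reduces to $\tau^{-1}\int Du_1\cdot D\zeta\,dx$ after the $m_2$ cancels --- that step is fine --- so the cross term is actually safe; the problem is isolated to the quadratic remainder. The fix is exactly the paper's: insert the Harnack lower bound on $m_2$ (valid because $h>0$ is fixed and the drift is uniformly bounded), and the argument closes.
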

\begin{proof}Let $\alpha_{1}(t,x)=D_pH(x,Du_1)$ for any $t\in[0,h+\tau]$. We consider the same $(m_{2},\alpha_{2})$ that we defined earlier: $m_{2}$ is defined in \eqref{m2def}, $\alpha_{2}=\alpha_{1}+\frac{D\zeta}{m_2\,\tau}$ in $[h,h+\tau]$ and $\alpha_2=\alpha_{1}$ elsewhere, where $\zeta$ solves \eqref{pe}. 
 According to \eqref{minU} we have
\be\label{minU2}
\mathcal V_\delta(m_2^0)-\mathcal V_\delta(m_1^0)\leq\int_0^h e^{-\delta s}\int_{\T^d}H^*(x,\alpha_{1})d(m_2-m_1)+
\ee
\begin{equation*}
+\int_h^{h+\tau}e^{-\delta s}\int_{\T^d}H^*(x,\alpha_2)dm_2-H^*(x,\alpha_{1})dm_1+\int_0^{h+\tau}\mathcal F(m_2(s))-\mathcal F(m_1(s))ds.
\end{equation*}
Using the convexity of $H$, we can estimate the term  $H^*(x,\alpha_2(s))$ for any time $s\in[h,h+\tau]$ as follows
\be\label{hstarcon}
\int_{\T^d}H^*(x,\alpha_2(s))dm_2(s)\leq\int_{\T^d}H^*(x, \alpha_{1}(s))dm_2(s)+ D_pH^*(x,\alpha_2(s))\cdot\frac{D\zeta(s)}{\tau}dx
\ee
$$
\leq \int_{\T^d}H^*(x, \alpha_{1}(s))dm_2(s)+\frac{1}{\tau}|\alpha_{1}(s)||D\zeta(s)|+\bar C\frac{|D\zeta(s)|^2}{\tau^2m_2(s)}dx,
$$
where in the last inequality we add and subtract $\alpha_{1}(s)\cdot D\zeta/\tau$ and we used the growth condition \eqref{1} on $D_{pp}H$.

We recall that, as the drift $D_pH(x,Du_1)$ is continuous and bounded, according to Theorem 2.2.1 \cite{bogachev2009elliptic}, the measure $m_1$ has a density $m_1(s,x) $ for any $s>0$, then, using Theorem 2.5.1 in \cite{bogachev2009elliptic}, for any $l\in(0,s)$, we have 
$$
m_1(s,x)>m_1(l,x_0)e^{-Q(1+\frac{1}{s-l}+\frac{1}{l})},
$$
where $Q$ does not depend on $m_1^0$, $l$ and $s$. As $\T^d$ is bounded, for any $l>0$, there exists a $x_0$ such that $m_1(l,x_0)>1/2$. Given that the same holds true for $\tilde m_2$ then, for any $s\in[h,h+\tau]$, the definition of $m_2$ in \eqref{m2def} implies that
$$
m_2(s,x)>\frac{1}{2}e^{-Q(1+\frac{1}{s-l}+\frac{1}{l})}\qquad\forall l\in(0,h).
$$
For $l=h/2$ we obtain that the infimum, with respect to $s$, in the righthand side is achieved when $s=h$. Thus,
\be\label{m2lb}
m_2(s,x)>\frac{1}{2}e^{-Q(1+\frac{4}{h})}.
\ee
We can now plug \eqref{hstarcon} and \eqref{m2lb} into \eqref{minU2}, which becomes
\be
\mathcal V_\delta(m_2^0)-\mathcal V_\delta(m_1^0)\leq\int_0^{h+\tau} e^{-\delta s}\int_{\T^d}H^*(x,\alpha_{1})d(m_2-m_1)+\mathcal F(m_2(s))-\mathcal F(m_1(s))ds
\ee
\begin{equation*}
+\int_h^{h+\tau}e^{-\delta s}\int_{\T^d}\frac{C_2}{\tau}|D\zeta|+2\frac{C}{\tau^2}|D\zeta|^2e^{Q(1+\frac{4}{h})}dxds.
\end{equation*}
Using the bounds on $Du_1$ found in Lemma \ref{2est}, Lemma \ref{duest} and the regularizing property of $\mathcal F$, we get
$$
\mathcal V_\delta(m_2^0)-\mathcal V_\delta(m_1^0)\leq
$$
\be
 C \int_0^{h+\tau}\bm d(m_2(s),m_1(s))ds+\int_{h}^{h+\tau}C\bm d(m_1^0,m_2^0)K_{s}+2\frac{C}{\tau^2}\bm d^2(m_1^0,m_2^0) e^{Q(1+\frac{4}{h})}K^{2}_sds\leq
\ee
\be
 C \int_0^{h+\tau}\bm d(m_2(s),m_1(s))ds+C\bm d(m_1^0,m_2^0)e^{Q(1+\frac{4}{h})}\int_{h}^{h+\tau}\left(1+\frac{K_{s}}{\tau^{2}}\bm d(m_1^0,m_2^0)\right)K_{s}ds\leq
\ee
\be\label{festimates}
 C \int_0^{h+\tau}\bm d(m_2(s),m_1(s))ds+\frac{C}{\tau^{2}}\bm d(m_1^0,m_2^0)e^{Q(1+\frac{4}{h})}\int_{h}^{h+\tau}K_{s}^{2}ds.
\ee
In the last inequality we neglected the terms which go to infinity slower than $K_{s}^{2}$ and which vanish faster than $\bm d(m_{1}^{0},m_{2}^{0})$. Note that the constant $K_{s}$ might explode when $s$ goes to $0$ but, otherwise, it is bounded. Therefore, as $h>0$, there is no problem of integrability for the term $\int_{h}^{h+\tau}K_{s}^{2}$.

We now focus on the first term in the above inequality. In order to estimate $\bm d(m_1(s),m_2(s))$, we have to look at the SDEs verified by the stochastic processes whose laws are $m_1$ and $m_2$. We first recall that an equivalent formulation of the $1-$Wasserstein distance between two probability measures $\mu$ and $\nu$ is

\be\label{RKd}
\bm d(\mu,\nu)=\inf_{\gamma}\left\{\int_{\T^d\times\T^d}|x-y| d\gamma(x,y)\mbox{ s.t. } \pi_1\gamma=\mu,\,\pi_2\gamma=\nu\right\}.
\ee
We consider a standard probability space $(\Omega,\mathcal G,\mathbb P)$ and two random variables $Z^1$, $Z^2$ such that $\mathcal L(Z^i)=m_i^0$ and $\mathbb E\left[\left|Z^2-Z^1\right|\right]=\bm d(m_1^0,m_2^0)$. Therefore, $m_1$ and $m_2$ are the laws of the processes defined by of the following SDEs
\be
\begin{cases}
dX_s^i=\alpha_i(t,X_s)ds+\sqrt{2}dB_s\\
X^i_0=Z^i.
\end{cases}
\ee
Using the definition of distance in \eqref{RKd}, we have
\be\label{dexp}
\bm d(m_1(s),m_2(s))\leq\mathbb E\left[\left|X^2_s-X_s^1\right|\right]\leq\mathbb E\left[\left|Z^2-Z^1\right|+\int_0^s\left|\alpha_2(l,X_l^2)-\alpha_1(l,X_l^1)\right|dl\right].
\ee
We first split $\int_0^{h+\tau}\bm d(m_2(s),m_1(s))ds$ in the sum of the integrals on the intervals $[0,h]$ and $[h,h+\tau]$. For any $s\in [0,h]$, $\alpha_1(l,x)=D_pH(x,Du_1(t,x))=\alpha_2(l,x)$, then
$$
\bm d(m_1(s),m_2(s))\leq\bm d(m_1^0,m_2^0)+\mathbb E\left[\int_0^s \left|D_pH(X_l^2,Du_1(l,X_l^2))-D_pH(X_l^1,Du_1(l,X_l^1))\right|\right].
$$
Hypothesis \eqref{1}, \eqref{2} and Lemma \ref{2est} ensure that both $p\rightarrow D_pH(x,p)$ and $x\rightarrow D_pH(x,Du_1(l,x))$ are Lipchitz continuous, hence
$$
\bm d(m_1(s),m_2(s))\leq \bm d(m_1^0,m_2^0)+C\int_0^s \bm d(m_1(l),m_2(l))dl.
$$
If we apply Gronwall's inequality, then for any $s\in[0,h]$
\be\label{GWm1}
\bm d(m_1(s),m_2(s))\leq \bm d(m_1^0,m_2^0)e^{Cs}.
\ee
We now look at $\int_h^{h+\tau}\bm d(m_1(s),m_2(s))ds$. According to the definition of $\alpha_2$, for $s\in[h,h+\tau]$, we have 
$$
\bm d(m_1(s),m_2(s))\leq\bm d(m_1(h),m_2(h))+
$$
\be\label{estimates}
+\mathbb E\left[\int_h^s \left|D_pH\left(X_l^2,Du_1(l,X_l^2)\right)+\frac{D\zeta(l,X_l^2)}{\tau m_2(l,X_l^2)}-D_pH(X_l^1,Du_1(l,X_l^1))\right|\right].
\ee
Using \eqref{GWm1} on $\bm d(m_1(h),m_2(h))$ and splitting the last term, we get that \eqref{estimates} is smaller than
$$
\bm d(m_1^0,m_2^0)e^{Ch}+\mathbb E\left[\int_h^s \left|D_pH\left(X_l^2,Du_1(l,X_l^2)\right)-D_pH(X_l^2,Du_1(l,X_l^1))\right|\right]+\mathbb E\left[\int_h^s\left|\frac{D\zeta(l,X_l^2)}{\tau m_2(l,X_l^2)}\right|\right].
$$
If we use again that $x\rightarrow D_pH(x,Du_1(l,x))$ is Lipschitz continuous, we get
\be\label{estimates2}
\bm d(m_1(s),m_2(s))\leq \bm d(m_1^0,m_2^0)e^{Ch}+\frac{C}{\tau}\mathbb E\left[\int_h^s\left|\frac{D\zeta(l,X_l^2)}{m_2(l,X_l^2)}\right|\right]+\int_h^s\bm d(m_1(l),m_2(l)).
\ee
 Thanks to estimates \eqref{m2lb} on $m_2$ we can use Tonelli's theorem and switch the integral with the expectation. Using Lemma \ref{duest}, we eventually have

 $$
 \mathbb E\left[\int_h^s\left|\frac{D\zeta(l,X_l^2)}{m_2(l,X_l^2)}\right|\right]=\int_h^s\int_{\T^d}\left|D\zeta(l,x)\right|dx\leq\bm d(m_1^0,m_2^0)\int_h^s K_ldl.
 $$
 If we plug the last inequality into \eqref{estimates2}, we can use again Gronwall's inequality so that for $s\in[h,h+\tau]$
 \be\label{GWm2}
 \bm d(m_1(s),m_2(s))\leq \left(e^{Cs}+\frac{e^{s-h}}{\tau}\int_h^sK_l dl\right)\bm d(m_1^0,m_2^0).
\ee
 
 We can now suppose $h=\tau=1$ and plugging \eqref{GWm1} and \eqref{GWm2} into \eqref{festimates}, we finally get that, for a given constant $C$ depending on all the other ones 
 \be
 \mathcal V_\delta(m_2^0)-\mathcal V_\delta(m_1^0)\leq
 \ee
 $$
  C\bm d(m_{1}^{0},m_{2}^{0})\left(\int_0^{2}e^{Cs}ds+\int_1^{2} e^{s-1}\int_1^sK_ldlds+e^{5Q}\int_1^{2}K^{2}_sds\right).
 $$
 We recall that the constant $K_s$ of Lemma \ref{duest} is bounded when $h$ is not close to $0$ (Theorem 11.1 in \cite{ladyzhenskaia1988linear}). The infimum in the expression above is finite and none of the constants therein depends on $\delta$. Therefore, $\left\{\mathcal V_{\delta}\right\}_{\delta}$ is uniformly $\bar K$-Lipschitz with 
 $$
 \bar K= C\left(\int_0^{2}e^{Cs}ds+\int_1^{2} e^{s-1}\int_1^sK_ldlds+e^{5Q}\int_1^{2}K^{2}_sds\right).
 $$
 
\end{proof}

\begin{prop}\label{unifconv} For any fixed $\eta\in\mathcal P(\T^d)$ there exists a subsequence $\delta_{n}\to 0$, such that $\mathcal V_{\delta_{n}}(\cdot)-\mathcal V_{\delta_{n}}(\eta)$  uniformly converges to a function $\chi:\mathcal P(\T^d)\rightarrow\R$ when $n\rightarrow +\infty$. Moreover, $\delta_{n} \mathcal V_{\delta_{n}}$ uniformly converges to a constant $-\lambda$
 \end{prop}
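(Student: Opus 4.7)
The plan is to combine the uniform Lipschitz estimate of Theorem \ref{Ucont} with a uniform boundedness estimate on $\delta\mathcal V_\delta$, and then pass to subsequences via Ascoli--Arzel\`a on the compact space $(\mathcal P(\T^d),\bm d)$. The key observation is that $\mathcal V_\delta$ itself blows up like $\delta^{-1}$, but this divergence is absorbed by a single constant independent of $m_0$, so $\mathcal V_\delta(\cdot)-\mathcal V_\delta(\eta)$ remains bounded and equicontinuous.

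First I would establish that $\delta\mathcal V_\delta$ is uniformly bounded in $\delta$ and in $m_0$. The upper bound is precisely the one used at the beginning of the proof of Proposition \ref{exmin}: plugging the heat-equation solution $(\mu,0)$ as competitor gives
$$
\mathcal V_\delta(m_0)\leq \frac{1}{\delta}\Bigl(\sup_{x\in\T^d} H^*(x,0)+\sup_{m\in\mathcal P(\T^d)}\mathcal F(m)\Bigr),
$$
and this bound is finite because $H^*(x,0)=-\inf_p H(x,p)$ is continuous in $x$ (by the $C^2$ regularity of $H$ and \eqref{1}) hence bounded on the torus, and $\mathcal F$ is continuous on a compact set. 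A symmetric lower bound follows by monotonicity: $H^*\geq \inf_{x,p}H^*(x,p)$ and $\mathcal F\geq \inf_m \mathcal F(m)$ give $\mathcal V_\delta(m_0)\geq \delta^{-1}(\inf H^*+\inf\mathcal F)$. Hence $\{\delta\mathcal V_\delta\}_\delta$ is uniformly bounded.

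Second, the family $G_\delta(m):=\mathcal V_\delta(m)-\mathcal V_\delta(\eta)$ is uniformly $\bar K$-Lipschitz on $\mathcal P(\T^d)$ by Theorem \ref{Ucont} and satisfies $G_\delta(\eta)=0$. Since $(\mathcal P(\T^d),\bm d)$ is compact, $\{G_\delta\}_\delta$ is uniformly bounded (by $\bar K\,\mathrm{diam}(\mathcal P(\T^d))$) and equicontinuous. By Ascoli--Arzel\`a, there exists a subsequence $\delta_n\to 0$ along which $G_{\delta_n}$ converges uniformly to some continuous function $\chi:\mathcal P(\T^d)\to\R$ with $\chi(\eta)=0$.

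Finally, the scalar sequence $\{\delta_n\mathcal V_{\delta_n}(\eta)\}_n$ is bounded by the first step, so after passing to a further subsequence (still denoted $\delta_n$) it converges to some real number, which we call $-\lambda$. For arbitrary $m\in\mathcal P(\T^d)$, we decompose
$$
\delta_n\mathcal V_{\delta_n}(m)+\lambda = \delta_n G_{\delta_n}(m)+\bigl(\delta_n\mathcal V_{\delta_n}(\eta)+\lambda\bigr),
$$
and the first term is bounded uniformly in $m$ by $\delta_n \bar K\,\mathrm{diam}(\mathcal P(\T^d))\to 0$ while the second tends to $0$ by construction. Thus $\delta_n\mathcal V_{\delta_n}$ converges uniformly to $-\lambda$. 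The only delicate point is the uniform boundedness of $\delta\mathcal V_\delta$; everything else is a soft compactness argument leveraging the hard Lipschitz estimate already proved in Theorem \ref{Ucont}.
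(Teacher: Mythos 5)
Your proposal is correct and follows essentially the same route as the paper: uniform Lipschitz continuity of $\mathcal V_\delta$ (Theorem \ref{Ucont}) plus uniform boundedness of $\delta\mathcal V_\delta$ combined with Ascoli--Arzel\`a on the compact space $\mathcal P(\T^d)$. Your final step is a slight streamlining: instead of applying Ascoli--Arzel\`a a second time to $\delta\mathcal V_\delta$ and then showing the limit $\Psi$ is constant, you extract a convergent subsequence of the scalar sequence $\delta_n\mathcal V_{\delta_n}(\eta)$ and control $\delta_n G_{\delta_n}(m)$ directly, which is a clean and fully equivalent way to obtain uniform convergence to the constant $-\lambda$.
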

\begin{proof} 
The continuity proved in Theorem \ref{Ucont} ensures the boundedness of $\mathcal V_\delta(\cdot)-\mathcal V_\delta(\eta)$. Indeed we have $|\mathcal V_\delta(\cdot)-\mathcal V_\delta(\eta)|\leq \bar K\bm d(\cdot,\eta)$. As $\mathcal P(\T^d)$ is compact, the right hand side is bounded by a constant $K$. Arzel\`a-Ascoli theorem ensures that there exists a subsequence $\delta_{n}\to0$ such that $\mathcal V_{\delta_{n}}(\cdot)-\mathcal V_{\delta_{n}}(\eta)$ converges to a continuous function $\chi$. 

We now want to prove that $\delta \mathcal V_\delta$ is a bounded function. We fix a measure $\mu\equiv 1$, then we define the control $(m,\alpha)$ as follows: $m(t)=\mu$ and $\alpha(t)=0$ for all $t\in[0,+\infty)$. The control is admissible, therefore we have
$$
\delta \mathcal V_{\delta}(\mu)\leq\delta( H^*(x,0)+\mathcal F(\mu))\int_0^{\infty}e^{-\delta s}ds=H^*(x,0)+\mathcal F(\mu).
$$
Given that $H^{*}$ and $\mathcal F$ are bounded from below, then
$$
\delta \mathcal V_{\delta}(\mu)\geq \delta\left(\inf_{(x,a)\in\T^{d}\times\R^{d}}H^{*}(x,a)+\inf_{m\in\mathcal P(\T^{d})}\mathcal F(m)\right)\int_0^{\infty}e^{-\delta s}ds=\inf_{(x,a)}H^{*}(x,a)+\inf_{m}\mathcal F(m).
$$
Therefore, $\delta \mathcal V_{\delta}(\mu)$ is uniformly bounded in $\delta$. If we fix any other measure $m_0$ we can use again the uniform continuity of $\mathcal V_\delta$ to get that $|\delta \mathcal V_{\delta}(m_0)-\delta \mathcal V_{\delta}(\mu)|\leq \delta K$ that in turn tells us that $\delta \mathcal V_\delta (\cdot)$ is a sequence of uniformly continuous functions. Using again Arzel\`a-Ascoli theorem we get that $\delta_{n} \mathcal V_{\delta_{n}}$ uniformly converges to a function $\Psi$ (we can suppose $\delta_{n}$ to be the same subsequence that we identified earlier). Moreover, we have $|\delta_{n} \mathcal V_{\delta_{n}}(\cdot)-\delta_{n} \mathcal V_{\delta_{n}}(\mu)|\leq \delta_{n} K$. Taking the limit we get $|\Psi(\cdot)-\Psi(\mu)|\leq 0$ so that $\delta_{n} \mathcal V_{\delta_{n}}$ converges to the constant function $\Psi(\mu):=-\lambda$.

\end{proof}
\begin{lemma}\label{lem.1.7}Dynamic programming principle for $\chi$: for any $m_{0}\in\mathcal P(\T^{d})$ and $t>0$,
\be\label{dynchi}
\chi(m_{0})= \inf_{(m,\alpha)}\left( \int_{0}^{t}H^{*}(x,\alpha)dm(s)+\mathcal F(m(s))ds+\chi (m(t))\right)+\lambda t
 \ee
 where $m\in C^0([0,t],\mathcal P(\T^d))$, $\alpha\in L^2_{m}([0,t]\times\T^d,\R^d)$ and the pair $(m,\alpha)$ solves in sense of distribution $-\partial_{t}m_+\Laplace m+{\rm div}(m\alpha)=0$ with initial condition $m_{0}$.
\end{lemma}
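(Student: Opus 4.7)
The plan is to derive \eqref{dynchi} by evaluating the dynamic programming principle of Lemma~\ref{Vddynamic} along the subsequence $\delta_n\to 0$ furnished by Proposition~\ref{unifconv}, subtracting $\mathcal V_{\delta_n}(\eta)$ from both sides, and passing to the limit. Concretely, I rewrite the DPP as
$$
\mathcal V_{\delta_n}(m_0)-\mathcal V_{\delta_n}(\eta)= \inf_{(m,\alpha)} A_n(m,\alpha) \;+\; (e^{-\delta_n t}-1)\,\mathcal V_{\delta_n}(\eta),
$$
with
$$
A_n(m,\alpha):= \int_0^t e^{-\delta_n s}\!\!\int_{\T^d}H^*(x,\alpha)\,dm(s)+\mathcal F(m(s))\,ds + e^{-\delta_n t}\bigl(\mathcal V_{\delta_n}(m(t))-\mathcal V_{\delta_n}(\eta)\bigr).
$$
Proposition~\ref{unifconv} turns the left-hand side into $\chi(m_0)$. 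The additive correction is handled by the ergodic asymptotics: since $(e^{-\delta_n t}-1)/\delta_n\to -t$ and $\delta_n\mathcal V_{\delta_n}(\eta)\to -\lambda$, it converges to $t\lambda$. What remains is to prove that $\inf A_n$ converges to $\inf_{(m,\alpha)}\bigl\{\int_0^t[H^*dm+\mathcal F]ds+\chi(m(t))\bigr\}$, which splits into an easy upper and a substantive lower inequality.

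For the $\le$ half I take any admissible $(m,\alpha)$ on $[0,t]$ with $m(0)=m_0$ and extend it to $[0,+\infty)$, e.g.\ by freezing $\alpha\equiv 0$ after $t$, so the extension lies in $\mathcal E_2^{\delta_n}(m_0)$ for every $n$. Using it as a competitor in the DPP gives an inequality; passing to the limit is then straightforward, since $e^{-\delta_n s}\to 1$ with a fixed $L^1$-dominant on $[0,t]$ (dominated convergence on the running cost), $e^{-\delta_n t}\to 1$, and the uniform convergence $\mathcal V_{\delta_n}(\cdot)-\mathcal V_{\delta_n}(\eta)\to\chi(\cdot)$ gives $\mathcal V_{\delta_n}(m(t))-\mathcal V_{\delta_n}(\eta)\to\chi(m(t))$. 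Taking the infimum over $(m,\alpha)$ yields the $\le$ inequality in \eqref{dynchi}.

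The $\ge$ half is the substantive step. For each $n$, Proposition~\ref{exmin} provides a minimizer $(m^n,w^n)$ of $\mathcal V_{\delta_n}(m_0)$ arising from the MFG system, with feedback $\alpha^n=D_pH(x,Du^n)$ uniformly bounded in $n$ and $m^n$ uniformly $1/2$-H\"older in time by Lemma~\ref{2est}. Ascoli--Arzel\`a then yields, along a further subsequence, $m^n\to\bar m$ in $C^0([0,t];\mathcal P(\T^d))$ and $w^n=-m^n\alpha^n\rightharpoonup \bar w$ as vector measures on $[0,t]\times\T^d$, with $(\bar m,\bar w)$ admissible and starting at $m_0$. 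Writing the DPP as an equality along $(m^n,w^n)$, one passes to the lower limit using continuity of $\mathcal F$, the uniform convergence of $\mathcal V_{\delta_n}(\cdot)-\mathcal V_{\delta_n}(\eta)$ to $\chi$ combined with $m^n(t)\to\bar m(t)$, and the joint lower semicontinuity of the convex action $(m,w)\mapsto \int_0^t\!\int_{\T^d}H^*(x,-dw/dm)\,dm(s)\,ds$ with respect to narrow convergence of $m^n$ and weak convergence of $w^n$. The main obstacle is exactly this lower semicontinuity; however it is the standard convex-analysis ingredient already invoked (via \cite{briani2016stable,cardaliaguet2015second}) in the proof of Proposition~\ref{exmin}, so no new argument is needed. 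Once it is in place, one obtains $\chi(m_0)\ge\int_0^t\!\int_{\T^d}H^*(x,-d\bar w/d\bar m)\,d\bar m(s)+\mathcal F(\bar m(s))\,ds+\chi(\bar m(t))+t\lambda$, which is at least the infimum in \eqref{dynchi}, completing the proof.
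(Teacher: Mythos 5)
Your proof is correct and follows essentially the same route as the paper: evaluate Lemma~\ref{Vddynamic} along the subsequence $\delta_n$ of Proposition~\ref{unifconv}, subtract $\mathcal V_{\delta_n}(\eta)$, and pass to the limit, treating the $\le$ and $\ge$ inequalities separately with the competitor argument for the first and the minimizer from Proposition~\ref{exmin} for the second. The one place where you deviate is in the $\ge$ half: the paper uses the uniform $C^2$-in-space and Lipschitz-in-time estimates of Lemma~\ref{2est} to extract a \emph{uniformly} convergent subsequence of the feedbacks $\alpha_{\delta_n}=D_pH(x,Du_{\delta_n})$, so that $\int H^*(x,\alpha_{\delta_n})\,dm_{\delta_n}$ converges outright and the limiting curve achieves exact equality in \eqref{dynchi}; you instead only pass $w^n$ to the limit as vector measures and invoke the joint lower semicontinuity of $(m,w)\mapsto\int H^*(x,-dw/dm)\,dm$. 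Both are legitimate: the paper's version is slightly more direct (no semicontinuity needed, and it identifies an optimal curve), while yours leans on a standard convexity lemma already available via \cite{briani2016stable,cardaliaguet2015second} and sidesteps the time-equicontinuity of the feedback. Either way the lemma is established.
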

\begin{proof} In Proposition \ref{unifconv} we proved the convergence of $\mathcal V_{\delta}(\cdot)-\mathcal V_{\delta}(\eta)$ to $\chi(\cdot)$ along the subsequence $\{\delta_{n}\}_{n}$, for a fixed measure $\eta$. Hereafter, $\{\delta_{n}\}_{n}$ and $\eta$ will be the ones identified in that proposition.
 
We know from Proposition \ref{exmin} that, for any $\delta>0$, there exists a solution $(u_{\delta},m_{\delta})$ to \eqref{psystem} such that
\be
\mathcal V_\delta(m_0)=\int_0^t e^{-\delta s}\int_{\T^d}H^*(x,\alpha_\delta)dm_\delta(s)+\mathcal F(m_\delta(s))ds+e^{-\delta t}V_\delta(m_\delta(t)),
\ee
where $\alpha_\delta=D_pH(x,Du_\delta)$. If we take the expansion of $e^{-\delta t}$ and we subtract on both sides $\mathcal V_\delta(\eta)$ we get
\be\label{dynev}
\mathcal V_\delta(m_0)-\mathcal V_\delta(\eta)=\int_0^t e^{-\delta s}\int_{\T^d}H^*(x,\alpha_\delta)dm_\delta(s)+\mathcal F(m_\delta(s))ds+(1-t\delta+o(t\delta))V_\delta(m_\delta(t))-V_\delta(\eta).
\ee
We recall that the estimates in Lemma \ref{2est} are uniform in $\delta$ hence $D_pH(x,Du_{\delta_{n}})$ converges uniformly to a function $\alpha$. We can now take the limit $n\rightarrow +\infty$ and using that $\mathcal V_{\delta_{n}}(\cdot)-\mathcal V_{\delta_{n}}(\eta)\rightarrow\chi(\cdot)$ and $\delta_{n} \mathcal V_{\delta_{n}} (\cdot)\rightarrow -\lambda$, we get
$$
\chi(m_0)=\int_0^t\int_{\T^d}H^*(x,\alpha)dm(s)+\mathcal F(m(s))ds+\lambda t +\chi(m_t)
$$

In order to show that $(\alpha,m)$ is optimal, we fix a competitor $(\beta,\mu)$. According to the dynamic programming principle of $\mathcal V_{\delta}$, if we plug $(\beta,\mu)$ into \eqref{dynev}, we get
$$
\mathcal V_\delta(m_0)-\mathcal V_\delta(\eta)\leq\int_0^t e^{-\delta s}\int_{\T^d}H^*(x,\beta)d\mu(s)+\mathcal F(\mu(s))ds+(1-t\delta+o(t\delta))V_\delta(\mu(t))-V_\delta(\eta).
$$
Taking again the limit on the subsequence $\{\delta_{n}\}_{n}$ we eventually have that
$$
\chi(m_0)\leq\int_0^t\int_{\T^d}H^*(x,\beta)d\mu(s)+\mathcal F(\mu(s))ds+\lambda t +\chi(\mu_t),
$$ 
which proves the result.

\end{proof}
\subsection{Convergence of $\mathcal U(t,\cdot)/t$ and $\delta\mathcal V_{\delta}(\cdot)$}
In this section we propose a Tauberian-type result where we prove that the limit of $\delta\mathcal V_{\delta}(\cdot)$ coincides with the one of $\mathcal U(t,\cdot)/t$ when $t\rightarrow+\infty$.
\begin{teo}\label{utvdconv}The limit value $-\lambda$ is uniquely defined and $\delta\mathcal V_{\delta}(\cdot)\rightarrow -\lambda$ does not depend on a subsequence. Moreover, \label{uconv}$\frac{1}{T}\mathcal U(T,\cdot)$ uniformly converges to $-\lambda$ when $T$ goes to $+\infty$.
\end{teo}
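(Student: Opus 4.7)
The plan is to extract both statements from the corrector equation \eqref{dynchi} together with the uniform bound on $\chi$. The core observation is that the dynamic programming principle for $\chi$ produces a \emph{uniform} two-sided bound on $\mathcal U(T,\cdot)+\lambda T$, independent of $T$ and $m_0$, from which both the convergence of $T^{-1}\mathcal U(T,\cdot)$ and the uniqueness of the limit $-\lambda$ will follow.

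First, I fix $m_0\in\mathcal P(\T^d)$ and $T>0$. Testing \eqref{dynchi} against an arbitrary admissible $(m,\alpha)$ with $m(0)=m_0$ gives
$$\int_0^T\int_{\T^d}H^*(x,\alpha)\,dm(s)+\mathcal F(m(s))\,ds\;\geq\;\chi(m_0)-\chi(m(T))-\lambda T\;\geq\; -2\|\chi\|_\infty-\lambda T,$$
and taking the infimum over $(m,\alpha)$ yields $\mathcal U(T,m_0)+\lambda T\geq -2\|\chi\|_\infty$. For the matching upper bound, for each $\varepsilon>0$ I pick an $\varepsilon$-minimizer $(m^\varepsilon,\alpha^\varepsilon)$ of the right-hand side of \eqref{dynchi}; by construction this couple is admissible in the definition of $\mathcal U(T,m_0)$, so the same identity gives $\mathcal U(T,m_0)+\lambda T\leq 2\|\chi\|_\infty+\varepsilon$. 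Letting $\varepsilon\to 0$ and combining the two bounds,
$$\bigl|\mathcal U(T,m_0)+\lambda T\bigr|\;\leq\; 2\|\chi\|_\infty\qquad\forall\,T>0,\ \forall\, m_0\in\mathcal P(\T^d),$$
so dividing by $T$ delivers the uniform convergence $T^{-1}\mathcal U(T,\cdot)\to -\lambda$.

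To remove the subsequential character of the limit in Proposition \ref{unifconv}, suppose that along some other subsequence $\delta_{n'}\mathcal V_{\delta_{n'}}$ converges to a constant $-\lambda'$. Using the uniform Lipschitz estimate of Theorem \ref{Ucont}, I can extract a further sub-subsequence on which $\mathcal V_{\delta_{n'}}(\cdot)-\mathcal V_{\delta_{n'}}(\eta)$ converges to a continuous corrector $\chi'$ satisfying the analogue of \eqref{dynchi} with constant $\lambda'$. Repeating the previous argument with $(\chi',\lambda')$ in place of $(\chi,\lambda)$ gives $|\mathcal U(T,m_0)+\lambda'T|\leq 2\|\chi'\|_\infty$, hence $T^{-1}\mathcal U(T,\cdot)\to -\lambda'$; uniqueness of the limit forces $\lambda'=\lambda$. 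Combined with the relative compactness of $\{\delta\mathcal V_\delta\}$ in $C^0(\mathcal P(\T^d))$ coming from Theorem \ref{Ucont}, this shows that the whole family $\delta\mathcal V_\delta$ converges uniformly to the same constant $-\lambda$. I do not expect any serious obstacle here: the argument reduces to careful bookkeeping with the corrector DPP, and the only ingredient needed beyond Lemma \ref{lem.1.7} is the boundedness of $\chi$, which comes for free from its continuity on the compact set $\mathcal P(\T^d)$.
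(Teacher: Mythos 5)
Your proof is correct and follows essentially the same approach as the paper: use the boundedness of the corrector $\chi$ together with its dynamic programming principle to derive a two-sided bound on $\mathcal U(T,m_0)+\lambda T$ that is uniform in $T$ and $m_0$, then divide by $T$. You package the two inequalities into a single estimate $|\mathcal U(T,m_0)+\lambda T|\leq 2\|\chi\|_\infty$ via an $\varepsilon$-minimizer argument, whereas the paper proves the upper and lower bounds separately by choosing constants $C$, $C_2$ with $\chi+C\geq 0$ and $\chi-C_2\leq 0$; this is a cosmetic difference. You also spell out the subsequential-uniqueness step (any sub-subsequence of $\delta\mathcal V_\delta$ yields a corrector pair $(\chi',\lambda')$, the same sandwich forces $\lambda'=\lambda$, and equicontinuity from Theorem \ref{Ucont} upgrades this to convergence of the full family) which the paper leaves implicit with the single sentence that the limit of $\mathcal U(T,\cdot)/T$ "does not depend on the subsequence." Your more explicit treatment of this point is a mild improvement in clarity but not a different argument.
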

\begin{proof} Let $\{\delta_{n}\}_{n}$ such that $\delta_{n}\mathcal V_{\delta_{n}}\rightarrow-\lambda$ and $\mathcal V_{\delta_{n}}(\cdot)-\mathcal V_{\delta_{n}}(\eta)\rightarrow\chi(\cdot)$. As $\chi$ is a continuous function on the compact set $\mathcal P(\T^{d})$, there exists a constant $C>0$ such that $0\leq \chi(m)+C$ for any $m\in\mathcal P(\T^{d})$. If $(m(t),w(t))\in\mathcal E^{T}_{2}(m_{0})$, then
$$
\int_0^T\int_{\T^d}H^*\left(x,-\frac{dw(t)}{dm(t)}(x)\right)dm(t)+\mathcal F(m(t))dt
$$
$$
\leq\int_0^T\int_{\T^d}H^*\left(x,-\frac{dw(t)}{dm(t)}(x)\right)dm(t)+\mathcal F(m(t))dt+\chi(m(T))+\lambda T-\lambda T+C.
$$
 Taking the infimum over $\mathcal E_{2}^{T}$, the definition of $\mathcal U(T,m_{0})$ and the dynamic programming principle of $\chi$ lead to
 $$
 \mathcal U(T,m_{0})\leq \chi(m_{0})-\lambda T+ C.
 $$
 As the constant $C$ does not depend on $m_{0}$ and $T$, if we divide on both sides by $T$ and we take the limit $T\rightarrow+\infty$, we get
 $$
 \lim_{T\rightarrow+\infty}\frac{1}{T}\mathcal U(T,m_{0})\leq-\lambda.
 $$
The other inequality is analogous. We just need to take a $C_{2}>0$ such that $0\geq \chi(m)-C_{2}$ and repeat the same computation.

Note that the limit $\mathcal U(T,\cdot)/T\rightarrow -\lambda$ is uniform and does not depend on the subsequence $\delta_{n}$ or the function $\chi$. Therefore, the limit $\delta \mathcal V_{\delta}$ is uniquely defined.
\end{proof}

We conclude the section showing that our limit value $\lambda$ is never lower than the ergodic one $\bar\lambda$ defined in \eqref{elambda}.

\begin{prop}\label{mineqlam} Under the above assumptions, $\lambda\geq\bar\lambda$.
\end{prop}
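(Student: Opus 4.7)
The strategy is the obvious one: use a near-optimal stationary pair as a time-dependent competitor in $\mathcal U(T,\cdot)$ and pass to the limit via Theorem \ref{utvdconv}.

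Fix $\varepsilon>0$ and pick $(\bar m,\bar w)\in\mathcal E$ with
\[
\int_{\T^d} H^*\!\left(x,-\tfrac{d\bar w}{d\bar m}(x)\right)d\bar m(x)+\mathcal F(\bar m)\ \leq\ -\bar\lambda+\varepsilon.
\]
Regarded as time-independent curves on $[0,T]$, the pair $(\bar m,\bar w)$ trivially satisfies $-\partial_t\bar m+\Laplace\bar m-\dive\bar w=0$ (since $\partial_t\bar m=0$ and $\Laplace\bar m-\dive\bar w=0$ in the stationary sense), has initial datum $\bar m$, and its density $d\bar w/d\bar m$ belongs to $L^2_{\bar m}([0,T]\times\T^d)$ by Fubini, because
\[
\int_0^T\!\int_{\T^d}\left|\tfrac{d\bar w}{d\bar m}\right|^2 d\bar m\,dt\ =\ T\int_{\T^d}\left|\tfrac{d\bar w}{d\bar m}\right|^2 d\bar m\ <\ +\infty.
\]
Hence $(\bar m,\bar w)\in\mathcal E_2^{T}(\bar m)$ for every $T>0$, and using it as a competitor in the definition of $\mathcal U(T,\bar m)$ yields
\[
\mathcal U(T,\bar m)\ \leq\ T\left(\int_{\T^d} H^*\!\left(x,-\tfrac{d\bar w}{d\bar m}\right)d\bar m+\mathcal F(\bar m)\right)\ \leq\ T(-\bar\lambda+\varepsilon).
\]

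Dividing by $T$ and letting $T\to+\infty$, Theorem \ref{utvdconv} gives
\[
-\lambda\ =\ \lim_{T\to+\infty}\tfrac{1}{T}\mathcal U(T,\bar m)\ \leq\ -\bar\lambda+\varepsilon.
\]
Since $\varepsilon>0$ is arbitrary, $\lambda\geq\bar\lambda$.

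The only place where a verification is needed is the admissibility of $(\bar m,\bar w)$ as an element of $\mathcal E_2^{T}(\bar m)$, handled by the Fubini computation above; the rest is a one-line invocation of the uniform Tauberian statement already proved. I do not foresee an obstacle beyond confirming that the stationary infimum in \eqref{staticfunc3} is approached by pairs with the required $L^2$ integrability, which is built into the definition of $\mathcal E$.
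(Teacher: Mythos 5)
Your proof is correct and follows essentially the same route as the paper's: use a (near-)optimal stationary pair as a constant-in-time competitor in $\mathcal U(T,\cdot)$ and invoke the uniform convergence $\mathcal U(T,\cdot)/T\to-\lambda$ from Theorem \ref{utvdconv}. The only difference is cosmetic — you use an $\varepsilon$-approximate minimizer and verify the $L^2_{\bar m}$ admissibility explicitly, whereas the paper passes to the infimum over static competitors directly — so there is nothing substantive to add.
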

\begin{proof}We know that the convergence of $\mathcal U(\cdot,T)/T$ is uniform, therefore, if $(m,\alpha)$ is an admissible couple for the static problem, we can use it as competitor for $\mathcal U(m,T)$. So,
$$
\frac{1}{T}\mathcal U(T,m)\leq\frac{1}{T}\int_0^T\int_{\T^d}H^*(x,\alpha)dm+\mathcal F(m)dt=\int_{\T^d}H^*(x,\alpha)dm+\mathcal F(m).
$$
If we take the infimum over all admissible static $(m,\alpha)$ we get
$$
\frac{1}{T}\mathcal U(T,m)\leq-\bar\lambda.
$$
Letting $T$ go to $+\infty$, we get the result.
\end{proof}

\subsection{An other representation for $\lambda$}

We can now introduce a third representation for $\lambda$, inspired again by classic results on weak KAM theory (see for instance \cite{fathi1997solutions}), which consists in minimizing over paths with fixed endpoints. 

Let $\Pi_T(m_0,m_1)$ be the set of $(m,\alpha)\in C^0([0,+\infty),\mathcal P(\T^d))\times L^1_{m,loc}([0,+\infty)\times\T^d,\R^d)$ such that $(m,\alpha)$ solves the usual Fokker-Plank equation $-\partial_{t}m+\Laplace m-\dive(m\alpha)=0$ with the extra constraint $m(0)=m_0$ and $m(T)=m_1$. Note that, due to the smoothing property of the parabolic constraint, not for every $m_1$ we can find such a path, so $\Pi_T(m_0,m_1)$ might be the empty set.
\begin{prop}\label{3repr} Let $m_0$, $m_1\in\mathcal P(\T^d)$. If $m_1$ has a density in $H^1(\T^d)$ and there exists an $\varepsilon>0$ such that $m_1>\varepsilon$ almost everywhere then
\be
-\lambda=\lim_{T\rightarrow\infty}\frac{1}{T}\inf_{\Pi_T(m_0,m_1)}\int_0^T\int_{\T^d}H^*(x,\alpha)dm(s)+\mathcal F(m(s))ds.
\ee
\end{prop}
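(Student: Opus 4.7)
The plan is to establish the two inequalities $\liminf_{T\to\infty}\frac{1}{T}\inf_{\Pi_T(m_0,m_1)} J_T \geq -\lambda$ and $\limsup_{T\to\infty}\frac{1}{T}\inf_{\Pi_T(m_0,m_1)} J_T \leq -\lambda$, where $J_T$ denotes the cost functional on $[0,T]$. The lower bound is free: imposing the terminal constraint $m(T)=m_1$ only restricts the class of admissible pairs, so $\inf_{\Pi_T(m_0,m_1)} J_T \geq \mathcal U(T,m_0)$; dividing by $T$ and invoking Theorem \ref{utvdconv} gives the claim.

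The substance of the proposition is the upper bound, which I would obtain by concatenating an almost-optimal path with a short connector. Fix $\eta>0$; by Theorem \ref{utvdconv} one has $\mathcal U(T-1,m_0)\leq -\lambda(T-1)+\eta T$ for $T$ large enough, and Proposition \ref{exmin} (or rather its finite-horizon analogue) yields a minimizer $(m^*,\alpha^*)$ of $\mathcal U(T-1,m_0)$, with $\alpha^*=D_pH(x,Du^*)$ and the estimates of Lemma \ref{2est} available uniformly in $T$. On $[T-1,T]$ I would build a connecting pair $(\bar m,\bar\alpha)$ from $m^*(T-1)$ to $m_1$ whose cost is bounded by a constant $C$ independent of $T$. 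Concatenation then produces an element of $\Pi_T(m_0,m_1)$ with cost at most $-\lambda(T-1)+\eta T+C$; dividing by $T$ and sending first $T\to\infty$ and then $\eta\to 0$ yields the desired upper bound.

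For the connector I would adapt the linear-interpolation device already used in the proof of Theorem \ref{Ucont}. Set $\bar m(t)=(T-t)m^*(T-1)+(t-(T-1))m_1$ for $t\in[T-1,T]$; this is a genuine probability density that interpolates between the two endpoints. Define $\bar w = D\psi$ where $\psi(t,\cdot)$ is the zero-mean solution of $\Laplace\psi=-\partial_t\bar m+\Laplace\bar m$ on $\T^d$, and set $\bar\alpha = \bar w/\bar m$. By construction $(\bar m,\bar\alpha)$ satisfies the Fokker-Planck equation on $[T-1,T]$ with the correct endpoints, so in particular $\Pi_T(m_0,m_1)$ is nonempty. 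Using the quadratic upper bound $H^*(x,p)\leq C(1+|p|^2)$ coming from the uniform convexity of $H$, together with the boundedness of $\mathcal F$, the cost of the connector reduces to a uniform-in-$T$ bound on $\int_{T-1}^T\int_{\T^d} |\bar w|^2/\bar m \, dx\, ds$.

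The main obstacle, and the point where the hypotheses on $m_1$ are essential, is producing this $L^2$-bound uniformly in $T$. Two ingredients are needed. First, $\bar m$ must stay bounded away from zero uniformly in $T$: for $m_1$ this is the assumption $m_1\geq\varepsilon$, while for $m^*(T-1)$ it follows from the Harnack-type inequality already exploited in Theorem \ref{Ucont}, applied with $l=1$ and $s=T-1$, which yields $m^*(T-1,x)\geq c_0>0$ for some $c_0$ independent of $T$, since $D_pH(x,Du^*)$ is uniformly bounded by Lemma \ref{2est}. Second, $\|D\psi(t,\cdot)\|_{L^2(\T^d)}$ must be controlled uniformly in $t\in[T-1,T]$: from $\Laplace\psi = m^*(T-1)-m_1+(T-t)\Laplace m^*(T-1)+(t-(T-1))\Laplace m_1$ and a duality argument in $H^{-1}$, this reduces to the $H^1$ norms of $m^*(T-1)$ and $m_1$. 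The former is controlled through interior parabolic regularity for the Fokker-Planck equation with bounded drift, exploiting that $T-1\geq 1$ is at distance at least one from the initial time, and the latter is exactly the assumption $m_1\in H^1(\T^d)$. Combining these estimates gives the required $T$-independent bound on the connector cost and closes the argument.
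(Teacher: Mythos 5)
Your proposal is correct and follows essentially the same strategy as the paper: a lower bound for free from $\Pi_T(m_0,m_1)\subset\mathcal E^T_2(m_0)$ together with Theorem \ref{utvdconv}, and an upper bound obtained by prolonging an almost-optimal trajectory for $\mathcal U$ with a one-unit linear-interpolation connector to $m_1$ whose cost is bounded uniformly in $T$. The one variation worth noting is in the connector: the paper interpolates $(T+1-s)\bar m(s)+(s-T)m_1$ where $\bar m(s)$ is kept \emph{evolving} on $[T,T+1]$ under the frozen drift $\bar\alpha(T,\cdot)$, which cancels the $\Laplace\bar m$ contribution in the Fokker--Planck balance and requires only an $L^\infty$ (hence $L^2$) bound on $\bar m(s)$; you instead freeze the endpoint $m^*(T-1)$ and interpolate two static measures, so the term $Dm^*(T-1)$ survives in $D\psi$ and you need a $T$-uniform $H^1$ bound on $m^*(T-1)$. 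That stronger bound is still available, as you note, by interior parabolic regularity (the drift $D_pH(x,Du^*)$ is not merely bounded but Lipschitz thanks to the $D^2u^*$ bound in Lemma \ref{2est}, so $W^{2,p}$ interior estimates apply), so the argument closes; it is just marginally more demanding than the paper's version.
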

\begin{proof} Let $m_0$ and $m_1$ be as above and $(\bar m,\bar \alpha)$ be optimal for $\mathcal U(T,m_0)$. We extend $(\bar m, \bar w)$ in $[0,T+1]$ as follows: for any $t\in[T,T+1]$ we define $\bar\alpha (t,x)=\bar\alpha(T,x)$ and $\bar m(t,x)$ as the solution of $-\partial_{t} m+\Laplace m+\dive(\bar\alpha m)=0$ with $m(T,x)=\bar m(T,x)$. Note that $\bar\alpha$ is continuous and bounded in $[0,T+1]$, therefore, the estimates \eqref{m2lb} still apply.

We now define a path from $m_0$ to $m_1$ as follows: 
\be
m_2(s,x)=
\begin{cases}
\bar m(s,x)& s\in[0,T]\\
(T+1-s)\bar m(s,x)+(s-T) m_1(x)&s\in[T,T+1]
\end{cases}.
\ee
Let also $\zeta(s,x)$ be solution of $-\Laplace \zeta(s,x)=m_1(x)-\bar m(s,x)$ with $\int_{T^d}\zeta=0$. We can define the control
\be
\alpha_2(s,x)=
\begin{cases}\bar\alpha(s,x)& s\in[0,T]\\
\bar\alpha(s,x)-\frac{(s-T)\bar\alpha(T,x)m_1(x)+D\zeta(s,x)+(s-T)Dm_1(x)}{(T+1-s)\bar m(s,x)+(s-T) m_1(x)}& s\in[T,T+1].
\end{cases}
\ee
The couple $(m_2,\alpha_2)$ belongs to $\Pi_T( m_0,m_1)$.
From the definition of $\mathcal U$ we deduce that
$$
\frac{1}{T+1}\inf_{m\in\mathcal P(\T^d)}\mathcal U(T+1,m)\leq\frac{1}{T+1}\inf_{\Pi_{T+1}(m_0,m_1)}\int_0^{T+1}\int_{\T^d}H^*(x,\alpha)dm(s)+\mathcal F(m(s))ds$$
$$
\leq\frac{1}{T+1}\int_0^{T+1}\int_{\T^d}H^*(x,\alpha_2)dm_2(s)+\mathcal F(m_2(s))ds
$$
\be\label{zerterm}
=\frac{T}{T+1}\left(\frac{1}{T}\mathcal U(T,m_0)\right)+\frac{1}{T+1}\int_T^{T+1}\int_{\T^d}H^*(x,\alpha_2)dm_2(s)+\mathcal F(m_2(s))ds.
\ee
If we prove that $\frac{1}{T+1}\int_T^{T+1}\int_{\T^d}H^*(x,\alpha_2)dm_2(s)+\mathcal F(m_2(s))ds$ converges to zero we have the result. Indeed, if we let $T$ go to $+\infty$, according to Theorem \ref{uconv}, we have 
$$
-\lambda\leq\lim_{T\rightarrow\infty}\frac{1}{T+1}\inf_{\Pi_{T+1}(m_0,m_{1})}\int_0^{T+1}\int_{\T^d}H^*(x,\alpha)dm(s)+\mathcal F(m(s))ds\leq-\lambda.
$$

We now focus on the last part in \eqref{zerterm}. Given that $\mathcal F(m_2)$ is uniformly bounded, we look at the first term.
$$
\int_T^{T+1}\int_{\T^d}H^*(x,\alpha_2)dm_2(s)
$$
$$
\leq C\int_T^{T+1}\int_{\T^d}\frac{|\bar\alpha(T,x) m_{2}(s)+(s-T)\bar\alpha(T,x)m_1(s)+D\zeta(s,x)+(s-T)Dm_1(s)|^2}{m_2^{2}(s)}+1\,dm_{2}(s)ds
$$
\be\label{ine1929}
\leq C\int_T^{T+1}\int_{\T^d}\frac{\left(|\bar\alpha(T,x) m_{2}|+|\bar\alpha(T,x)m_1|+|D\zeta(s,x)|+|Dm_1|\right)^2}{m_2}dxds+C
\ee
If we use the hypothesis on $m_1$ and the estimates \eqref{m2lb} on $\bar m$ with $h=T+1$ and $t=T-1$, we get that $m_2\geq \tau$ for a certain $\tau>0$ independent of $T$. Lemma \ref{2est} ensures that $\bar\alpha$ is uniformly bounded by a constant $K$ independent of $T$. Therefore, \eqref{ine1929} is lower than
$$
\frac{1}{\tau}\left(K\Vert m_{2}\Vert_{L^{2}(\T^{d}\times[T,T+1])}+K\Vert m_{1}\Vert_{L^{2}(\T^{d})}+\Vert D\zeta\Vert_{L^{2}(\T^{d}\times[T,T+1])}+\Vert Dm_{1}\Vert_{L^{2}(\T^{d})}\right)^{2}+C.
$$
Thanks again to the boundedness of $\bar\alpha$, standard result on parabolic equations tell us that $\bar m(s)$ (which is defined at the beginning of the proof) is uniformly bounded from above in $[T,T+1]$. Hence, $\Vert D\zeta\Vert_{L^2}\leq C \Vert m_1\Vert_{L^2}$ and $\Vert m_{2}\Vert_{L^{2}(\T^{d}\times[T,T+1])}\leq C\Vert m_{1}\Vert_{L^{2}(\T^{d})}+C_{2} $. Thus
$$
\int_T^{T+1}\int_{\T^d}H^*(x,\alpha_2)dm_2(s)\leq M \Vert m_1\Vert^2_{H^1(\T^d)}+M_2
$$
where neither $M$ nor $M_2$ depends on $T$. Dividing by $T+1$ and taking the limit completes the proof.
\end{proof}
\section{Projected Mather set and Calibrated curves}
\subsection{Calibrated Curves}

We borrow again some tools and some notations from the weak KAM theory (see Chapter 4 of \cite{fathi2008weak}) and in particular we will focus on the notion of calibrated curve.  Before introducing this notion, we look back to the dynamic programming principle verified by corrector functions, which reads
$$
\chi(m_{0})= \inf_{(m,w)\in\mathcal E^{t}_{2}(m_{0})}\left( \int_{0}^{t}H^{*}\left(x,-\frac{w}{m}\right)dm(s)+\mathcal F(m(s))ds+\chi (m(t))\right)+\lambda t.
$$
As the function $\chi$ is continuous, standard arguments show that, for any fixed $m_{0}\in\mathcal P(\T^{d})$ and $t>0$, there exists a solution $(\bar m,\bar w)\in\mathcal E^{t}_{2}(m_{0})$ to the minimization problem described above. Therefore, extending $(\bar m,\bar w)$ from $\bar m(t)$, it is easy to construct a new trajectory $(\bar m,\bar w)$, defined on $[0,+\infty)$ such that, for any $\tau>0$, it verifies

$$
\chi(m_{0})=\lambda \tau+ \int_{0}^{\tau} \inte H^{*}\left(x, -\frac{\bar w(s)}{\bar m(s)}\right)d\bar m(s) +\mathcal F(\bar m(s))ds+\chi(\bar m(\tau)).
$$
 We now prove that any of these trajectories is associated to a MFG system.

\begin{prop}\label{duplca}
Let $m_{0}\in\mathcal P(\T^{d})$, $\chi$ be a corrector function and $(\bar m,\bar w)$ be a minimizing trajectory on $[0,+\infty)$ defined as above. Then, $\bar{m}\in C^{1,2}((0+\infty)\times\T^{d})$ and there exists a function $\bar u\in C^{1,2}([0,+\infty)\times\T^{d})$ such that $\bar w=-\bar mD_{p}H(x,D\bar u)$ where $(\bar m,\bar u)$ solves
\begin{equation}
\begin{cases} 
-\partial_t u-\Laplace u+H(x,Du)=F(x,m) & \mbox{in } [0,+\infty)\times\T^{d}\\
-\partial_t m+\Laplace m+{\rm div}(mD_pH(x,Du))=0 & \mbox{in }[0,+\infty)\times\T^{d}\\
m(0)=m_0.
\end{cases}
\end{equation}
\end{prop}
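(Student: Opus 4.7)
Fix $T>0$ and restrict $(\bar m,\bar w)$ to $[0,T]$. Applying the calibration identity both at $(\bar m,\bar w)$ and at any competitor $(m,w)$ with $m(0)=m_0$ and $m(T)=\bar m(T)$, the contribution of the corrector at the endpoints cancels and one sees that $(\bar m,\bar w)|_{[0,T]}$ is a minimizer of the planning problem
$$\inf\left\{\int_0^T\!\int_{\T^d} H^{*}\!\left(x,-\frac{dw}{dm}\right)\!dm+\mathcal F(m)\,dt \ :\ m(0)=m_0,\ m(T)=\bar m(T)\right\}.$$
Exactly as in the proof of Proposition \ref{exmin}, the first-order optimality condition together with the convexity of $H^{*}$ and the $C^{2}$ regularity of $\mathcal F$ implies that $(\bar m,\bar w)|_{[0,T]}$ is also a minimizer of the jointly convex, linearized functional
$$\bar J^{T}(m,w):=\int_0^T\int_{\T^d}\left[H^{*}\!\left(x,-\frac{dw}{dm}\right)+F(x,\bar m(t))\right]dm\,dt$$
under the same endpoint constraints.

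Now compute the Fenchel--Rockafellar dual of $\bar J^{T}$, mimicking the computation in Proposition \ref{exmin}. Because both endpoints of $m$ are prescribed, the dual problem reads
$$\sup_u\left\{\int_{\T^d}\!u(T)\,d\bar m(T)-\int_{\T^d}\!u(0)\,dm_0\ :\ u\in C_b^{1,2},\ -\partial_t u-\Laplace u+H(x,Du)\le F(x,\bar m)\right\},$$
so in contrast with the discounted case the terminal value of $u$ is free. Strong duality (the primal value being finite since $(\bar m,\bar w)$ is admissible) combined with the comparison principle for the HJB equation produces a classical solution $\bar u^T\in C^{1,2}([0,T]\times\T^d)$ of
$$-\partial_t u-\Laplace u+H(x,Du)=F(x,\bar m),$$
determined up to an additive constant, and the primal--dual extremality conditions force $\bar w=-\bar m\,D_p H(x,D\bar u^T)$ on $[0,T]\times\T^d$.

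To pass to the limit $T\to+\infty$, observe that for $T_1<T_2$ both $\bar u^{T_1}$ and $\bar u^{T_2}|_{[0,T_1]}$ solve the same HJB equation with identical right-hand side $F(\cdot,\bar m)$, so the comparison principle forces them to differ by a time-independent constant; after the normalization $\int_{\T^d}\bar u^T(0,x)\,dx=0$ the family $\{\bar u^T\}_T$ therefore patches into a single function $\bar u$ defined on $[0,+\infty)\times\T^d$. The estimates of Lemma \ref{2est} carry over verbatim to the present setting: a close reading of that proof shows that the discount term $\delta u$ only provides a favorable sign in the maximum-principle computation and can safely be dropped, yielding uniform $L^\infty$ bounds on $D\bar u$, $D^2\bar u$ and $\partial_t\bar u$. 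Hence $\bar u\in C^{1,2}([0,+\infty)\times\T^d)$, the identity $\bar w=-\bar m D_pH(x,D\bar u)$ extends to all of $[0,+\infty)\times\T^d$, and classical parabolic regularity applied to the Fokker--Planck equation for $\bar m$ with the Lipschitz drift $D_pH(\cdot,D\bar u)$ gives $\bar m\in C^{1,2}((0,+\infty)\times\T^d)$.

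The main obstacle is carrying out the Fenchel--Rockafellar duality with a \emph{prescribed} terminal distribution rather than a terminal cost and extracting a genuinely classical HJB sub-solution $\bar u^T$: this is precisely where the finiteness of the primal value (a consequence of the admissibility of $(\bar m,\bar w)$) and the comparison principle for the HJB equation are essential. A secondary delicate point is the transfer of Lemma \ref{2est} to the undiscounted, planning-type case; this is possible because at an interior maximum of $D^2u\,\xi\cdot\xi$ the bad lower-order terms are absorbed by $\bar C^{-1}M^{2}$ regardless of whether one has the extra positive contribution $\delta M$.
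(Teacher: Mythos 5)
Your strategy (fix $T$, observe that $(\bar m,\bar w)$ restricted to $[0,T]$ solves the planning problem between $m_0$ and $\bar m(T)$, linearize, dualize, extract $\bar u^T$ from the dual, then patch) is exactly the structure the paper follows. But there is a genuine gap at the sentence "Strong duality \dots combined with the comparison principle for the HJB equation produces a classical solution $\bar u^T\in C^{1,2}([0,T]\times\T^d)$." Fenchel--Rockafellar gives equality of the primal and dual \emph{values}; it does not hand you an optimizer inside the class $\mathcal K$ of classical HJB solutions. Attainment is precisely the delicate point, and the paper treats it as such: the proof of Proposition \ref{duplca} explicitly says that existence of the dual minimizer is ``developed in Lemma \ref{exdualpl} in appendix,'' and the acknowledgement singles out a ``crucial hint which resulted in Lemma \ref{exdualpl}.'' That lemma proves attainment via a nontrivial argument: additivity $\bar A_{t_1}^{t_3}=\bar A_{t_1}^{t_2}+\bar A_{t_2}^{t_3}$, local compactness of a minimizing sequence on $[t_1,\tau]$ for $\tau<t_3$, and a contradiction argument to show the limit minimizes on the subinterval.

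Two further steps in your argument are not correct as stated. First, the claim that the estimates of Lemma \ref{2est} ``carry over verbatim'' misidentifies the difficulty. The issue is not the absence of $\delta u$ (you are right that $\delta u$ only helps) but the absence of the terminal condition $u(T,\cdot)=0$. In Lemma \ref{2est} the semiconcavity maximum cannot occur at $t=T$ because $M=0$ there; in the planning problem $u(T,\cdot)$ is free, the maximum could sit at $t=T$, and the Bernstein argument breaks down. This is why Lemma \ref{exdualpl} inserts the cutoff $\eta(t)=(t-T)^2$ and only obtains interior-in-time bounds with constants $C(\tau)$ blowing up as $\tau\to T$. Second, the patching step is unjustified: two $C^{1,2}$ solutions of the same backward HJB equation on $[0,T_1]$ with the same source $F(\cdot,\bar m)$ are \emph{not} forced by the comparison principle to differ by a time-independent constant; they can have arbitrary (possibly non-constant-difference) data at $t=T_1$. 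The paper avoids this entirely by working on a single fixed interval $[0,t]$ and then noting $t$ was arbitrary to get $\bar m\in C^{1,2}((0,+\infty)\times\T^d)$, rather than gluing the $\bar u^T$.
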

\begin{remark}Due to the lack of regularity of $\chi$ we can not derive the MFG system as optimal condition for the minimization problem \eqref{impmin}. Indeed, if $\chi$ were $C^{1}$ we would derive typical MFG system with terminal condition $u(t)=\delta\chi(m(t))/\delta m$ but this latter term is not well defined.
\end{remark}
\begin{proof}The proof relies on the same arguments as in Proposition \ref{exmin}. Let $(\bar m,\bar w)$ be as in the hypothesis. Then it verifies the Fokker-Plank equation and it is a minimizer of the problem
\be\label{impmin}
\inf_{(m,w)\in\mathcal E^{t}_{2}(m_{0})}\int_{0}^{t}\inte H^{*}\left(x,-\frac{w}{m}\right)dm(s)+\mathcal F(m(s))ds+\chi (m(t)).
\ee

As $(\bar m,\bar w)$ is optimal for the minimization problem above, then it must be also optimal for the following MFG planning problem
\be\label{PMFG}
\inf_{(m,w)\in\Pi(m_{0},\bar m(t))}\int_{0}^{t}\inte H^{*}\left(x,-\frac{w}{m}\right)dm(s)+\mathcal F(m(s))ds,
\ee
where $\Pi(m_{0},\bar m(t))$ is the set of $(m,w)\in\mathcal E_{2}^{t}(m_{0})$ that solves the usual Fokker-Plank equation on $[0,t]$ with the constraints $m(0)=m_{0}$ and $m(t)=\bar m(t)$.  We want to prove that $\bar w=-\bar m D_{p}H(x,D\bar u)$ where $(\bar m,\bar u)$ solves in classical sense

 $$
 \begin{cases}
-\partial_t u-\Delta u +H(x,Du)= F(x,m) & {\rm in}\;[0,t]\times \T^d\\
\partial_tm -\Delta m-\dive(mD_pH(x,Du))=0 &{\rm in}\;[0,t]\times \T^d\\
m(0)=m_{0},\, m(t)=\bar m(t).
\end{cases}
$$
We argue again as in Proposition \ref{exmin}. According to Proposition 3.1 in \cite{briani2016stable}, $(\bar m,\bar w)$ minimizes also the following convex problem
\be\label{PMFGc}
\inf_{(m,w)\in\Pi(m_{0}),\bar m(t))}\int_{0}^{t}\inte H^{*}\left(x,-\frac{w}{m}\right)dm(s)+\int_{\T^{d}}F(x,\bar m(s))dm(s)ds.
\ee

This problem admits a dual formulation, in the sense of Fenchel Rokafellar Theorem, which reads
\be\label{dual}
\inf_{\psi\in \bar{\mathcal K}}\left\{\int_{T^{d}}\psi(x,t)d\bar m(t)-\int_{T^{d}}\psi(x,0)dm_{0})\right\}
\ee
where $\bar{\mathcal K}$ is the set of $\psi\in C^{1,2}([0,t]\times\T^{d})$ such that $-\partial_{t}\psi-\Laplace \psi+H(x,D\psi)\leq F(x,\bar m)$. A full justification of the result above can be found again in \cite{cardaliaguet2015second}. 

In the definition of the dual problem we can replace $\bar{\mathcal K}$ with $\mathcal K$, where $\mathcal K$ is the set of $u\in C^{1,2}([0,t]\times\T^{d})$ such that $-\partial_t u-\Delta u +H(x,Du)= F(x,\bar m)$. Indeed, if $\psi$ verifies $-\partial_{t}\psi-\Laplace \psi+H(x,D\psi)\leq F(x,\bar m)$, we can alway consider $u\in C^{1,2}([0,t]\times\T^{d})$ solution of 
$$
 \begin{cases}
-\partial_t u-\Delta u +H(x,Du)= F(x,\bar m) & {\rm in}\;[0,t]\times \T^d\\
u(x,t)=\psi(x,t)&\mbox{in }\T^{d}
\end{cases}
$$
Thanks to the comparison principle we have that $u(0,x)\geq \psi(0,x)$, thus
\be\label{minplan}
\inf_{\psi\in \bar{\mathcal K}}\left\{\int_{T^{d}}\psi(x,0)d\bar m(t)-\int_{T^{d}}\psi(x,0)dm_{0}\right\}\geq\inf_{u\in \mathcal K}\left\{\int_{T^{d}}u(x,t)d\bar m(t)-\int_{T^{d}}u(x,0)dm_{0}\right\}
\ee
The opposite inequality holds by inclusion. 
Lemma 3.2 in \cite{cardaliaguet2015second} and Proposition 3.1 in \cite{briani2016stable}, which rely on the Fenchel-Rokafellar Theorem, ensure that, if $(\bar m,\bar w)$ is a minimizer of \eqref{PMFGc} and $\bar u\in\mathcal K$ is a minimizer of the dual problem, then

$$
\int_{T^{d}}u(x,t)d\bar m(t)-\int_{T^{d}}u(x,0)dm_{0}+\int_{0}^{t}\inte H^{*}\left(x,-\frac{\bar w}{\bar m}\right)d\bar m(s)+\int_{\T^{d}}F(x,\bar m(s))d\bar m(s)ds=0.
$$ 
This implies that $\bar w=-\bar m D_{p}H(x,\bar u)$. As a consequence, we have that $\bar m$ is driven by a smooth drift and so, by Schauder theory, $\bar m\in C^{1,2}((0,t]\times\T^{d})$. In particular, given that $t$ is arbitrary, then $\bar m\in C^{1,2}((0,+\infty)\times\T^{d})$. 

We assumed that the minimization problem \eqref{minplan} admits a solution. The proof  of this result is developed in Lemma \ref{exdualpl} in appendix.
\end{proof}
\begin{remark}\label{justif} Note that the convex duality between the minimization problems
$$
\inf_{(m,w)\in\Pi(m_{0},\bar m(t))}\int_{0}^{t}\inte H^{*}\left(x,-\frac{w}{m}\right)dm(s)+\int_{\T^{d}}F(x,\bar m(s))dm(s)ds
$$ 
and
$$
\inf_{u\in \mathcal K}\left\{\int_{T^{d}}u(x,t)d\bar m(t)-\int_{T^{d}}u(x,0)dm_{0}\right\}
$$
holds true independently from the existence of minimizers for the latter one and, therefore, independently from Lemma \ref{exdualpl}.
\end{remark}

We can now introduce the notion of calibrated curve. Let $\mathcal E_{2}^{\infty}$ be the set of $(m(t),w(t))\in\mathcal P(\T^{d})\times\mathcal M(\T^{d};\R^{d})$ such that $m\in C^0(\R,\mathcal P(\T^d))$, $w$ is absolutely continuous with respect to $m$, its density $dw/dm$ belongs to $L^{2}_{m,loc}(\R\times\T^{d})$ and $-\partial_{t}m+\Laplace m-{\rm div}w=0$ is verified in sense of distribution.

\begin{defin}We say that $(\bar m,\bar w)\in\mathcal E_{2}^{\infty}$ is a calibrated curve if there exists a continuous function $\chi:\mathcal P(\T^{d})\rightarrow \R$ which verifies the dynamic programming principle \eqref{dynchi} and $(\bar m,\bar w)$ is optimal for $\chi$: for any $t_{1}<t_{2}\in\R$
 \be\label{kajehbzredf}
\chi(\bar m(t_1))=\lambda(t_2-t_1)+ \int_{t_1}^{t_2} \inte H^{*}\left(x, -\frac{\bar w(s)}{\bar m(s)}\right)d\bar m(s) +\mathcal F(\bar m(s))ds+\chi(\bar m(t_2)).
\ee

\end{defin}

A direct consequence of Proposition \ref{duplca} is the following result which tells that calibrated curves are smooth and associated to MFG systems defined for any time $t\in\R$.

\begin{prop}\label{duplc}
If $(m,w)\in\mathcal E^{\infty}$ is a calibrated curve, then $m\in C^{1,2}(\R\times\T^{d})$ and there exists a function $u\in C^{1,2}(\R\times\T^{d})$ such that $w=-mD_{p}H(x,Du)$ where $(m,u)$ solves
\begin{equation}
\begin{cases} 
-\partial_t u-\Laplace u+H(x,Du)=F(x,m) & \mbox{in } \R\times\T^{d}\\
-\partial_t m+\Laplace m+{\rm div}(mD_pH(x,Du))=0 & \mbox{in }\R\times\T^{d}
\end{cases}
\end{equation}
\end{prop}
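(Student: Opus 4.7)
The plan is to apply Proposition \ref{duplca} to the calibrated curve on each half-line $[t_0,+\infty)$ with $t_0\in\R$ arbitrary, and then patch together the resulting local HJB-solutions into a single one defined on $\R\times\T^d$.

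First, I would observe that, because $(\bar m,\bar w)$ is calibrated, the time-translated curve $\tilde m(s)=\bar m(t_0+s)$, $\tilde w(s)=\bar w(t_0+s)$ satisfies, by \eqref{kajehbzredf}, the hypothesis of Proposition \ref{duplca} with initial datum $\bar m(t_0)$: for every $\tau>0$,
$$
\chi(\tilde m(0))=\lambda\tau+\int_0^\tau\inte H^*\left(x,-\frac{\tilde w(s)}{\tilde m(s)}\right)d\tilde m(s)+\mathcal F(\tilde m(s))ds+\chi(\tilde m(\tau)).
$$
Proposition \ref{duplca} then produces, after translating back by $t_0$, a function $u^{t_0}\in C^{1,2}([t_0,+\infty)\times\T^d)$ such that $\bar m\in C^{1,2}((t_0,+\infty)\times\T^d)$, $\bar w=-\bar m\,D_pH(x,Du^{t_0})$ and $(\bar m,u^{t_0})$ solves the MFG system on $[t_0,+\infty)\times\T^d$. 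Since $t_0\in\R$ is arbitrary, this already yields $\bar m\in C^{1,2}(\R\times\T^d)$ with $\bar m>0$ everywhere.

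Second, I would check compatibility on overlaps. For $t_0<t_0'$, on $[t_0',+\infty)\times\T^d$ both $Du^{t_0}$ and $Du^{t_0'}$ satisfy $D_pH(x,Du^{t_0})=D_pH(x,Du^{t_0'})=-\bar w/\bar m$, and the uniform convexity \eqref{1} makes $p\mapsto D_pH(x,p)$ a bijection for every $x$. Hence $Du^{t_0}\equiv Du^{t_0'}$ on the overlap, so the difference $u^{t_0}-u^{t_0'}$ depends only on $t$; subtracting the two HJB equations (identical right-hand side $F(x,\bar m)$, identical Hamiltonian term because the gradients coincide) then forces $\partial_t(u^{t_0}-u^{t_0'})\equiv 0$, and the difference is an additive constant on each connected overlap.

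Third, to build $u$ on all of $\R\times\T^d$, I would pick a sequence $t_n\searrow-\infty$ and recursively normalize $u^{t_n}$ by a constant so that it coincides with the already-defined $u^{t_{n-1}}$ on $[t_{n-1},+\infty)\times\T^d$. The resulting function $u:\R\times\T^d\to\R$ inherits the $C^{1,2}$ regularity of each local piece, satisfies $\bar w=-\bar m\,D_pH(x,Du)$ everywhere, and solves the full MFG system on $\R\times\T^d$. The only delicate step is the compatibility in step two, but it reduces to a uniqueness remark: strict convexity of $H$ in $p$ pins down $Du$ pointwise from the momentum $-\bar w/\bar m$, and then the HJB equation itself fixes $u$ up to a single additive constant on each connected overlap, which is exactly what is needed to glue.
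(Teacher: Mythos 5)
Your argument is correct and takes essentially the same route as the paper, which presents Proposition \ref{duplc} as a ``direct consequence'' of Proposition \ref{duplca} without spelling out the details. Applying \ref{duplca} from an arbitrary starting time $t_0$ and then gluing the local solutions using the injectivity of $p\mapsto D_pH(x,p)$ (which pins down $Du$ from $-\bar w/\bar m$, and hence $u$ up to an additive constant via the HJB equation) is precisely the reasoning the paper implicitly relies on.
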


\subsection{The projected Mather set}

 \begin{defin}We say that $m_0\in \mathcal P(\T^{d})$ belongs to the \textit{projected Mather set} ${\mathcal M}\subset \mathcal P(\T^{d})$ if there exists a calibrated curve $(m(t),w(t))$ such that $m(0)=m_{0}$. 
\end{defin}
Note that, if from $m_{0}$ starts a calibrated curve $m(t)$, then, by translation, $m(t)\in \mathcal M$ for any $t\in\R$.

\begin{prop}\label{nemptms} There exists a calibrated curve and, consequently, the projected Mather set $\mathcal M$ is not empty.
\end{prop}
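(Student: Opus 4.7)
The plan is to construct a calibrated curve defined on all of $\R$ as a limit of forward-calibrated trajectories defined on half-lines $[-n,+\infty)$. Fix a corrector $\chi$ (given by Lemma \ref{lem.1.7}) and any $m_{0}\in\mathcal P(\T^{d})$. Since the minimization problem is autonomous, Proposition \ref{duplca} applied with initial time $-n$ rather than $0$ yields, for each $n\in\N$, a trajectory $(m_{n},w_{n})$ defined on $[-n,+\infty)$ with $m_{n}(-n)=m_{0}$ which is optimal for $\chi$ on every subinterval $[t_{1},t_{2}]\subset[-n,+\infty)$. Moreover $w_{n}=-m_{n}D_{p}H(x,Du_{n})$ where $(u_{n},m_{n})$ solves the associated (undiscounted) MFG system on $[-n,+\infty)\times\T^{d}$.

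The next step is to secure uniform estimates. The semiconcavity computation of Lemma \ref{2est} carries over to this undiscounted system: the discount term $\delta u$ was not essential in the key inequality at the maximum point, because the quadratic $\bar C^{-1}M^{2}$ already dominates the subquadratic terms $(1+M)^{1+\theta}$ since $\theta<1$. Consequently $\|Du_{n}\|_{\infty}$, $\|D^{2}u_{n}\|_{\infty}$, and $\|\partial_{t}u_{n}\|_{\infty}$ are bounded uniformly in $n$, and $\bm d(m_{n}(t),m_{n}(s))\leq C|t-s|^{1/2}$ uniformly in $n$. By a diagonal extraction based on Arzelà-Ascoli applied on each $[-k,k]$, a subsequence satisfies $m_{n}\to\bar m$ uniformly on compact intervals of $\R$, $u_{n}\to\bar u$ locally uniformly together with $Du_{n}\to D\bar u$, and hence $w_{n}\to\bar w:=-\bar m\,D_{p}H(x,D\bar u)$ as Radon measures on every compact set.

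To verify that $(\bar m,\bar w)$ is calibrated, fix $t_{1}<t_{2}\in\R$ and choose $n$ large enough that $-n<t_{1}$. Then the calibration identity \eqref{kajehbzredf} holds for $(m_{n},w_{n})$ on $[t_{1},t_{2}]$, and one passes to the limit term by term: the boundary values $\chi(m_{n}(t_{i}))$ converge by continuity of $\chi$; $\int_{t_{1}}^{t_{2}}\mathcal F(m_{n}(s))\,ds$ converges by dominated convergence; and for the Hamiltonian term one uses the dual identity $H^{*}(x,-w_{n}/m_{n})=Du_{n}\cdot D_{p}H(x,Du_{n})-H(x,Du_{n})$ together with the local uniform convergence of $Du_{n}$ and the narrow convergence of $m_{n}(s)$. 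This yields the calibration identity for $(\bar m,\bar w)\in\mathcal E_{2}^{\infty}$, so $\bar m(0)\in\mathcal M$ and $\mathcal M\neq\emptyset$.

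The main technical subtlety is adapting Lemma \ref{2est} to the undiscounted MFG systems appearing in Proposition \ref{duplca}: the discount $\delta u$ was used in the semiconcavity computation, but the argument is driven by the coercivity of $D_{pp}H$ and not by the sign of $\delta$, so the adaptation is mild. A second, smaller issue is to obtain convergence of $Du_n$ strong enough to pass to the limit inside the nonlinear Hamiltonian term; this follows from the uniform $C^{1,2}$ bounds and standard parabolic regularity. Everything else in the passage to the limit is routine.
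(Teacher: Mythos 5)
Your construction is genuinely different from the paper's. The paper passes through the discounted problem: it takes the minimizers $(\bar m_\delta^T,\bar u_\delta^T)$ of the discounted problem $\mathcal V_\delta$, translates time by $T$, sends $T\to+\infty$ using the estimates of Lemma \ref{2est}, then normalizes $u_\delta$ by $u_\delta(0,0)$ and sends $\delta\to0$ along the subsequence identified in Lemma \ref{lem.1.7}. You instead invoke Proposition \ref{duplca} directly to obtain undiscounted optimal trajectories on $[-n,+\infty)$ and take a single limit $n\to+\infty$. Your route is conceptually leaner (one limit rather than two, no normalization of $u_\delta$), and your passage to the limit in the calibration identity via Legendre duality and continuity of $\chi$ is sound. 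But the paper's detour is what allows it to invoke Lemma \ref{2est} exactly as stated: the estimates there are global on $[0,+\infty)$ and uniform in $\delta$ for the discounted system, which is precisely what the $T\to+\infty$ step needs.

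The gap lies in your justification of the uniform estimates for the undiscounted $u_n$. You correctly observe that the discount term $\delta u$ is inessential to the semiconcavity computation, but you overlook a second ingredient: in the proof of Lemma \ref{2est}, the maximum of $w=D^2u^T\xi\cdot\xi$ is excluded from $t=T$ only because the terminal condition $u^T(T,\cdot)=0$ forces $w(T,\cdot)=0$ there. The dual optimizers $u_n$ produced by Proposition \ref{duplca} for the undiscounted planning problem carry no such terminal condition, so the maximum of $D^2u_n\xi\cdot\xi$ could well sit at the right endpoint, where the argument gives nothing. The device that actually handles this is the cutoff $\eta(t)=(t-T)^2$ in Lemma \ref{exdualpl}, which yields \emph{interior} estimates on $[t_1,\tau]$ with constants $M(\tau)\to\infty$ as $\tau\to T$. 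These are local in time, not global; your claim that $\|Du_n\|_\infty$, $\|D^2u_n\|_\infty$, $\|\partial_t u_n\|_\infty$ are bounded ``uniformly in $n$'' should read: bounded uniformly in $n$ on every compact subset of $\R$ (taking $n$ large enough that the compact interval is well inside $[-n,+\infty)$ and bounded away from the right endpoint of the window on which the cutoff is applied). That is exactly what your diagonal extraction requires and exactly what Lemma \ref{exdualpl} delivers, but presenting the step as a ``mild adaptation'' of Lemma \ref{2est} with the discount as the only obstacle elides the cutoff mechanism that does the real work.
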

\begin{proof}We fix a smooth density $m_{0}$ and we look at the $\delta$-discounted problem \eqref{discprob} which reads
\be
\mathcal V_\delta(m_0)=\inf_{(m,w)}\int_0^\infty e^{-\delta t}\int_{\T^d}H^*\left(x,-\frac{w}{m}\right)dm(t)+\mathcal F(m(t))dt.
\ee
We recall that $\mathcal V_{\delta}$ satisfies the dynamic programming principle
\be
\mathcal V_\delta(m_0)=\inf_{m,w}\int_0^T e^{-\delta s}\int_{\T^d}H^*\left(x,-\frac{w}{m}\right)dm(s)+\mathcal F(m(s))ds+e^{-\delta T}\mathcal V_\delta(m(t)),
\ee
where the infimum is taken over $(m,w)\in\mathcal E_{2}^{\delta}(m_{0})$. We already know that the solution of the minimization problem corresponds to a couple $(\bar m_{\delta}^{T},-\bar m_{\delta}^{T}D_{p}H(x,D\bar u_{\delta}^{T}))$ where $(\bar m_{\delta}^{T},\bar u_{\delta}^{T})$ solves
\begin{equation}\label{nsconv}
\begin{cases} 
-\partial_t u-\Laplace u+\delta u+H(x,Du)=F(x,m) & \mbox{in } \T^d\times[0,+\infty)\\
-\partial_t m+\Laplace m+{\rm div}(mD_pH(x,Du))=0 & \mbox{in }\T^d\times[0,+\infty)\\
m(0)=m_0&\mbox{in }\T^{d}.
\end{cases}
\end{equation}
Note that, as the initial condition is smooth, the solution $(\bar m_{\delta}^{T},\bar u_{\delta}^{T})$ is smooth as well.

We define the new couple $(m_{\delta}^{T},w_{\delta}^{T})$ as $m_{\delta}^{T}(t,x)=\bar m_{\delta}^{T}(t+T,x)$ and $w_{\delta}^{T}(t,x)=\bar w_{\delta}^{T}(t+T,x)$ so that our problem is set on $[-T,+\infty)$. We now want to prove that, when we take the limit $T\rightarrow+\infty$, our sequence converges to a couple $(m_{\delta},w_{\delta})$ defined on $\R\times\T^{d}$ such that the Fokker-Plank equation is still verified. We proved in Lemma \ref{2est} that the drift $D_{p}H(x,Du_{\delta}^{T})$ is uniformly bounded in $T$, therefore, $m^{T}_{\delta}$ is the solution of a Fokker-Plank equation with bounded and smooth drift. This means that $m^{T}_{\delta}$ is uniformly bounded in $C^{1,2}([-T+1,+\infty)\times\T^{d})$. 

This implies that, at least on compact subsets of $\R\times\T^{d}$, when we take the limit $T\rightarrow+\infty$, we have, up to a subsequence, uniform convergence of $m^{T}_{\delta}$ to a limit $m_{\delta}$. 

The same convergence holds true also for $w^{T}_{\delta}$. Indeed, in Lemma \ref{2est} we proved also the uniform boundedness of $D^{2}u^{T}_{\delta}$ and $\partial_{t}u^{T}_{\delta}$ that implies the uniform continuity and the uniform boundedness of $w^{T}_{\delta}$. The convergence $(m_{\delta}^{T},w_{\delta}^{T})$ to $(m_{\delta},w_{\delta})$ ensures that the couple $(m_{\delta},w_{\delta})$ verifies the Fokker-Plank equation on $\R\times\T^{d}$.

 We fix two different times $t_{1}<t_{2}$. For sufficiently large $T$, the interval $[t_{1},t_{2}]$ is included in $[-T,+\infty)$. If we apply the dynamic programming principle for  $\mathcal V_{\delta}$, we get
 \be
\mathcal V_\delta(m^{T}_{\delta}(t_{1}))=\int_{t_{1}}^{t_{2}} e^{-\delta (s-t_{1})}\int_{\T^d}H^*\left(x,-\frac{w_{\delta}^{T}}{m_{\delta}^{T}}\right)dm_{\delta}^{T}(s)+\mathcal F(m_{\delta}^{T}(s))ds+e^{-\delta (t_{2}-t_{1})}\mathcal V_\delta (m_{\delta}^{T}(t_{2})).
\ee

We can now take the limit of $T\rightarrow+\infty$ in the above expression and we find that $(m_{\delta},w_{\delta})$ verifies
\be
\mathcal V_\delta(m_{\delta}(t_{1}))=\int_{t_{1}}^{t_{2}} e^{-\delta (s-t_{1})}\int_{\T^d}H^*\left(x,-\frac{w_{\delta}}{m_{\delta}}\right)dm_{\delta}(s)+\mathcal F(m_{\delta}(s))ds+e^{-\delta (t_{2}-t_{1})}\mathcal V_\delta (m_{\delta}(t_{2})).
\ee
for any $t_{1}<t_{2}\in\R$.

As the function $u_{\delta}^{T}$ is uniformly bounded in $T$ we have also uniform convergence of $u_{\delta}^{T}$ to a function $u_{\delta}$. We can then pass to the limit in the MFG system \eqref{nsconv} and the couple $(u_{\delta},m_{\delta})$ solves

\begin{equation}\label{nsconv2}
\begin{cases} 
-\partial_t u_{\delta}-\Laplace u_{\delta}+\delta u_{\delta}+H(x,Du_{\delta})=F(x,m_{\delta}) & \mbox{in } \T^d\times\R\\
-\partial_t m_{\delta}+\Laplace m_{\delta}+{\rm div}(m_{\delta}D_pH(x,Du_{\delta}))=0 & \mbox{in }\T^d\times\R.
\end{cases}
\end{equation}

As in \cite{barles2001space}, in order to let $\delta\rightarrow 0$, we need to define $\bar u_{\delta}(t,x)=u_{\delta}(t,x)-u_{\delta}(0,0)$ and $\theta_{\delta}=u_{\delta}(0,0)$. The couple $(\bar u_{\delta},m_{\delta})$ solves

\begin{equation}\label{nsconv3}
\begin{cases} 
-\partial_t \bar u_{\delta}-\Laplace \bar u_{\delta}+\delta \bar u_{\delta}+\delta\theta_{\delta}+H(x,D\bar u_{\delta})=F(x,m_{\delta}) & \mbox{in } \T^d\times\R\\
-\partial_t m_{\delta}+\Laplace m_{\delta}+{\rm div}(m_{\delta}D_pH(x,D\bar u_{\delta}))=0 & \mbox{in }\T^d\times\R\\
\bar u_{\delta}(0,x)=u_{\delta}(0,x)-u_{\delta}(0,0)&\mbox{in }\T^{d}.
\end{cases}
\end{equation}

We restrict ourselves to the subsequence $\{\delta_{n}\}_{n}$ identified in the proof of Lemma \ref{lem.1.7}. Using again the uniform estimates on $D\bar u_{\delta}$, we have that $\bar u_{\delta}(0,x)$ is uniformly bounded which implies the boundedness of $\delta\theta_{\delta}$. Moreover, thanks to the bounds on $D^{2} u_{\delta}$ and $\partial_{t}u_{\delta}$, $\bar u_{\delta}$ is also uniformly continuous and the same holds true for $m_{\delta}$. We can then pass to the limit on any compact set and $\bar u_{\delta_{n}}\rightarrow u$, $m_{\delta_{n}}\rightarrow m$ and $\delta_{n}\theta_{\delta_{n}}\rightarrow \theta$ where $(u,m,\theta)$ solves

\begin{equation}\label{nsconv4}
\begin{cases} 
-\partial_t u-\Laplace u+\theta+H(x,D u)=F(x,m) & \mbox{in } \T^d\times\R\\
-\partial_t m+\Laplace m+{\rm div}(mD_pH(x,Du))=0 & \mbox{in }\T^d\times\R\\
\end{cases}
\end{equation}

As we can always replace $u(t,x)$ with $u(t,x)-\theta t$ we can suppose $\theta=0$. 
The convergences above give us also the uniform convergence on compact sets (up to subsequence) of the couple $(m_{\delta_{n}},w_{\delta_{n}})=(m_{\delta_{n}},-m_{\delta_{n}}D_{p}H(x,D\bar u_{\delta_{n}}))$ to $(m,w)=(m,-m D_{p}H(x,Du))$ which solves the usual Fokker-Plank equation.

 Let now $\eta\in\mathcal P(\T^{d})$ be the measure identified in the proof of Lemma \ref{lem.1.7}. Then 

$$
\mathcal V_{\delta_{n}}(m_{\delta_{n}}(t_{1}))-\mathcal V_{\delta_{n}}(\eta)=
$$
$$
\int_{t_{1}}^{t_{2}} e^{-\delta_{n}(s-t_{1})}\int_{\T^d}H^*\left(x,-\frac{w_{\delta_{n}}}{m_{\delta_{n}}}\right)dm_{\delta_{n}}(s)+\mathcal F(m_{\delta_{n}}(s))ds+e^{-\delta_{n} (t_{2}-t_{1})}\mathcal V_{\delta_{n}} (m_{\delta_{n}}(t_{2}))-\mathcal V_{\delta_{n}}(\eta).
$$
Given the continuity of $\mathcal V_{\delta}$, the uniform convergence of $m_{\delta}$ and $w_{\delta}$, we can pass to the limit in $n$ and we finally get that for any interval $[t_{1},t_{2}]$ the couple $(m,w)$ verifies the Fokker-Plank equation on $\R$ and

$$
\chi(m(t_{1}))= \int_{t_{1}}^{t_{2}}\inte H^{*}\left(x,-\frac{w}{m}\right)dm(s)+\mathcal F(m(s))ds+\chi (m(t_{2}))+\lambda (t_{2}-t_{1}).
$$

In particular we found a calibrated curve and, for any $t\in\R$, $m(t)$ belongs to the projected Mather set $\mathcal M$.

\end{proof}

\subsection{Compactness of the projected Mather set}

In Proposition \ref{duplca} we proved that, if $\chi$ is a corrector function and $(m,w)$ is a trajectory starting from $m_{0}\in\mathcal P(\T^{d})$ which is optimal for the dynamic programming principle of $\chi$, then $(m,w)$ is associated to a MFG system which enjoys the estimates we proved in Lemma \ref{2est}. Therefore, a completely analogous proof to the one proposed in Theorem \ref{Ucont} gives the following result.

 \begin{prop}\label{remlip}
  The set of corrector functions is uniformly Lipschitz continuous.
\end{prop}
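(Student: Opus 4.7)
The plan is to reproduce, almost verbatim, the argument of Theorem \ref{Ucont}, replacing the role of $\mathcal V_\delta$ with an arbitrary corrector $\chi$ and relying on Proposition \ref{duplca} to produce a smooth optimal trajectory on which the estimates of Lemma \ref{2est} are available. First I fix a corrector $\chi$ and two initial measures $m_1^0, m_2^0\in\mathcal P(\T^d)$. By the existence of a minimizer for the dynamic programming principle satisfied by $\chi$, together with Proposition \ref{duplca}, I obtain an optimal trajectory $(\bar m_1,\bar w_1)$ starting from $m_1^0$ and a function $\bar u_1\in C^{1,2}([0,T]\times\T^d)$ such that $\bar w_1=-\bar m_1D_pH(x,D\bar u_1)$ and $(\bar u_1,\bar m_1)$ solves the (undiscounted) MFG system. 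Crucially, inspection of the proof of Lemma \ref{2est} shows that the semiconcavity bound comes from the quadratic term $\bar C^{-1}|D^2u\xi|^2$ dominating the lower order terms in $|Du|$; the discount $\delta M$ is nonnegative and only helps, so the bounds on $Du_1$, $D^2u_1$, $\partial_t u_1$ and the Hölder regularity of $\bar m_1$ all carry over with constants independent of the corrector and of $T$.

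With these estimates in hand, I copy the construction of the competitor from the proof of Theorem \ref{Ucont}: for parameters $h,\tau>0$ I define $\tilde m_2$ by \eqref{tildefp} with drift $D_pH(x,D\bar u_1)$, interpolate linearly between $\tilde m_2$ and $\bar m_1$ on $[h,h+\tau]$ as in \eqref{m2def}, and set the drift $\alpha_2=D_pH(x,D\bar u_1)+D\zeta/(m_2\tau)$ on $[h,h+\tau]$ with $\zeta$ solving the Poisson equation \eqref{pe}. The dynamic programming principle of $\chi$ gives
\begin{align*}
\chi(m_2^0)-\chi(m_1^0) \leq \int_0^{h+\tau}\!\!\int_{\T^d}\bigl(H^*(x,\alpha_2)dm_2-H^*(x,D_pH(x,D\bar u_1))d\bar m_1\bigr)+\mathcal F(m_2)-\mathcal F(\bar m_1)\,ds,
\end{align*}
exactly as in \eqref{minU}, with the $\lambda(h+\tau)+\chi(\bar m_1(h+\tau))$ terms cancelling. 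Note that the absence of the factor $e^{-\delta s}$ is harmless, because in Theorem \ref{Ucont} this factor was only used to bound $e^{-\delta s}\leq 1$; all the real work was in the pathwise estimates.

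From here the calculation is identical to the one in Theorem \ref{Ucont}: Lemma \ref{duest} controls $\|D\zeta(s)\|_{L^2}$ by $\bm d(m_1^0,m_2^0)$, the lower bound \eqref{m2lb} on $m_2$ follows from Harnack-type estimates applied to the Fokker--Planck equation driven by the bounded continuous drift $D_pH(x,D\bar u_1)$, and the Grönwall argument on the SDE representations \eqref{GWm1}--\eqref{GWm2} yields $\bm d(m_1(s),m_2(s))\leq C\bm d(m_1^0,m_2^0)$ on $[0,h+\tau]$ with constants independent of $\chi$ and of $m_i^0$. Choosing $h=\tau=1$ as in the original proof gives
\begin{equation*}
\chi(m_2^0)-\chi(m_1^0)\leq \bar K\,\bm d(m_1^0,m_2^0),
\end{equation*}
with the same explicit constant $\bar K$ that appeared in Theorem \ref{Ucont}, which depends only on the data $H,\mathcal F$ and the universal constants from Lemmas \ref{2est} and \ref{duest}. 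Swapping the roles of $m_1^0$ and $m_2^0$ provides the reverse inequality, and since $\bar K$ does not depend on the chosen corrector $\chi$, the whole family of correctors is uniformly Lipschitz.

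The only genuinely subtle point, and the one deserving explicit verification, is the first one: that the a priori bounds in Lemma \ref{2est} survive setting $\delta=0$ in the HJB equation. Once this is settled, the construction of the competitor and the Grönwall/Wasserstein estimates transplant to the corrector setting without modification, and no quantity entering $\bar K$ depends on $\chi$.
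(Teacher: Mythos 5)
Your proposal follows exactly the route the paper intends: invoke Proposition~\ref{duplca} to represent the trajectory optimal for the dynamic programming principle of $\chi$ as a solution of an (undiscounted) MFG system, import the uniform a priori bounds, and then repeat verbatim the competitor construction and Gr\"onwall/Wasserstein estimates of Theorem~\ref{Ucont}; the paper itself gives no more than a one-sentence pointer to this same argument. One small imprecision worth flagging: you justify the carry-over of the estimates purely by observing that the $\delta M$ term in the maximum-principle computation of Lemma~\ref{2est} is nonnegative and hence removable, but the proof of Lemma~\ref{2est} also uses the terminal condition $u^T(T,\cdot)=0$ to dispose of a boundary maximum at $t=T$, and the dual minimizer $\bar u_1$ of Proposition~\ref{duplca} carries no such terminal datum. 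The correct source of the semiconcavity bound for $\bar u_1$ is the cutoff argument of Lemma~\ref{exdualpl} in the appendix, where the weight $\eta(t)=(t-T)^2$ replaces the terminal condition and yields estimates on $[0,\tau]$ that are uniform once $T$ is taken large relative to $\tau$; since here one only needs bounds on $[0,h+\tau]=[0,2]$ while the calibrated trajectory lives on all of $[0,\infty)$, this suffices. This is the same looseness the paper commits in its own one-line reference to Lemma~\ref{2est}, so it does not undermine the argument, but the precise citation should be to Lemma~\ref{exdualpl} rather than to the $\delta\to 0$ stability of Lemma~\ref{2est}.
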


We can now prove the compactness of the projected Mather set $\mathcal M$
\begin{prop}The projected Mather set $\mathcal M$ is compact 
\end{prop}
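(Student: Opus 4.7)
The plan is to prove closedness of $\mathcal M$ in the compact space $(\mathcal P(\T^d), \bm d)$. Fix a sequence $m_0^n \in \mathcal M$ with $m_0^n \to m_0^\infty$; I need to produce a corrector $\chi_\infty$ and a calibrated curve $(m_\infty, w_\infty)$ with $m_\infty(0) = m_0^\infty$. For each $n$, select a corrector $\chi_n$ and a calibrated curve $(m_n, w_n)$ with $m_n(0) = m_0^n$; by Proposition \ref{duplc}, there exists $u_n \in C^{1,2}(\R \times \T^d)$ such that $(m_n, u_n)$ solves the ergodic MFG system and $w_n = -m_n D_p H(x, Du_n)$.

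The key compactness input is uniform regularity. The estimates of Lemma \ref{2est} were derived uniformly in $\delta$, and the proof of Proposition \ref{nemptms} already transferred them to $\delta = 0$ solutions of the ergodic MFG system. Hence $\|Du_n\|_\infty$, $\|D^2 u_n\|_\infty$, $\|\partial_t u_n\|_\infty$ are bounded independently of $n$, and $t \mapsto m_n(t)$ is Hölder continuous with a uniform modulus. Since the ergodic system contains no zero-order term in $u$, I replace $u_n$ by $u_n - u_n(0,0)$ to keep it locally bounded. On the corrector side, Proposition \ref{remlip} combined with Arzelà–Ascoli on the compact space $\mathcal P(\T^d)$ yields a subsequence along which $\chi_n$ converges uniformly to a Lipschitz function $\chi_\infty$.

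A further diagonal extraction produces $(m_\infty, u_\infty)$ as a locally uniform limit of $(m_n, u_n)$ on $\R \times \T^d$, still solving the ergodic MFG system with $m_\infty(0) = m_0^\infty$, and $w_n \to w_\infty := -m_\infty D_p H(x, Du_\infty)$ locally uniformly. Passing to the limit in the calibration identity
$$
\chi_n(m_n(t_1)) = \lambda(t_2 - t_1) + \int_{t_1}^{t_2}\!\! \int_{\T^d} H^*\!\left(x, -\frac{w_n(s)}{m_n(s)}\right) dm_n(s) + \mathcal F(m_n(s))\, ds + \chi_n(m_n(t_2))
$$
is then straightforward thanks to the uniform convergence of $\chi_n$ and $m_n$, the uniform boundedness of $H^*(x, D_p H(x, Du_n))$, and the smoothness of $m_n$; I obtain the same identity for $(m_\infty, w_\infty, \chi_\infty)$ and every $t_1 < t_2$.

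To conclude, I verify that $\chi_\infty$ is a corrector. Letting $\tilde \chi_\infty(m_0)$ denote the right-hand side of \eqref{dynchi} with $\chi$ replaced by $\chi_\infty$, the stability estimate $|\tilde \chi_\infty(m_0) - \chi_n(m_0)| \leq \|\chi_\infty - \chi_n\|_\infty$ follows from the fact that two infima of functionals differing pointwise by at most $c$ differ themselves by at most $c$. Since $\chi_n(m_0) \to \chi_\infty(m_0)$ uniformly, this forces $\chi_\infty = \tilde \chi_\infty$, so $\chi_\infty$ satisfies the dynamic programming principle. Therefore $(m_\infty, w_\infty)$ is a calibrated curve and $m_0^\infty \in \mathcal M$. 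The main obstacle is the uniformity of the estimates of Lemma \ref{2est} across different calibrated MFG systems, but it is guaranteed because those estimates depend only on the structural hypotheses on $H$ and $F$, not on the corrector that was used to produce the curve.
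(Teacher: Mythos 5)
Your proof is correct, but the route is genuinely different from the paper's, and the difference is worth noting. Both arguments begin identically: take $m_0^n \to m_0^\infty$ with calibrated curves $(m_n, w_n)$ and correctors $\chi_n$, normalize the $\chi_n$ and use Proposition \ref{remlip} plus Arzel\`a--Ascoli to extract a uniformly convergent subsequence $\chi_n \to \chi_\infty$. The divergence is in how one passes to the limit in the calibration identity. The paper takes a purely variational route: the calibration identity bounds the energy $\int H^*(\cdot, -w_n/m_n)\,dm_n$, which bounds $\int |w_n|$, hence gives equi-H\"older bounds on $m_n$ in $C^{1/2}([t_1,t_2],\mathcal P(\T^d))$; then $w_n \to w$ only weakly as vector measures, and the paper invokes weak lower semicontinuity of the convex energy functional to get the ``$\geq$'' direction of the calibration identity in the limit, obtaining the ``$\leq$'' direction by the dynamic programming principle. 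You instead route through Proposition \ref{duplc}: each calibrated curve solves a smooth MFG system, the estimates of Lemma \ref{2est} (whose constants depend only on $H$ and $F$, as you correctly emphasize) give uniform $C^{1,2}$ bounds on the $u_n$, and you get \emph{strong} local convergence of $(m_n, w_n)$. This lets you pass to the limit directly in the calibration equality rather than having to split into two inequalities; it also sidesteps the question of whether the weak limit $w$ is absolutely continuous with respect to $m$ with $L^2_m$ density, which the paper's weak-compactness argument leaves implicit. The price you pay is depending on the PDE characterization and, implicitly, on the uniform extension of the Lemma \ref{2est} estimates to curves defined on all of $\R$ (which requires an interior semiconcavity argument, since the original proof used the terminal condition $u(T)=0$ --- see the cutoff argument in Lemma \ref{exdualpl}). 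Finally, your explicit verification that $\chi_\infty$ again satisfies the dynamic programming principle, via the observation that two infima of functionals that differ pointwise by at most $c$ differ by at most $c$, fills a gap that the paper leaves tacit when it invokes ``the opposite inequality is true by dynamic programming principle'': that step requires knowing $\chi_\infty$ is a corrector, which is exactly your stability estimate.
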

\begin{proof} 
 
 Let $m_{n}\in\mathcal M$ such that $m^{0}_{n}\rightarrow m_{0}$. Let $(m_{n}(t),w_{n}(t))$ be the calibrated curve starting from $m^{0}_{n}$. For any $t_{1}$, $t_{2}$ we know that $(m_{n}(t),w_{n}(t))$ verifies
\be\label{chidef}
\chi_{n}(m_{n}(t_{1}))\geq\int_{t_{1}}^{t_{2}}\inte H^{*}\left(x,-\frac{w_{n}}{m_{n}}\right)dm_{n}(s)+\mathcal F(m_{n}(s))+\lambda (t_{2}-t_{1})+\chi_{n}(m_{n}(t_{2})).
\ee

We know from Proposition \ref{remlip} that the set $\{\chi_{n}\}_{n}$ is uniformly Lipschitz. If we replace $\chi_{n}$ with $\chi_{n}(\cdot)-\chi_{n}(\eta)$, then $\{\chi_{n}\}_{n}$ is also bounded and thus compact. Therefore, we can pick a subsequence such that $\chi_{n}$ converges to a function $\chi$.

Given that $\chi_{n}$ are uniformly bounded, then 
$$
\int_{t_{1}}^{t_{2}}\inte H^{*}\left(x,-\frac{w_{n}}{m_{n}}\right)dm_{n}(s)+\mathcal F(m_{n}(s))ds\leq C.
$$
The constant $C$ does not depend on $n$ and, therefore, $\int_{t_{1}}^{t_{2}}|w_{n}|$ is uniformly bounded as well. As we argued in Proposition \ref{exmin}, this implies that $\{m_{n}\}_{n}$ is uniformly bounded in $C^{1/2}([t_{1},t_{2}],\mathcal P(\T^{d}))$ for any $t_{1}$, $t_{2}$. We have then that $m_{n}$ converges uniformly on any compact set to a limit $m\in C^{0}(\R,\mathcal P(\T^{d}))$ and the same holds true for $w_{n}$ in $\mathcal M (\R\times\T^{d};\R^{d})$, therefore $(m,w)$ solves in sense of distribution the usual FP equation on $\R$. By weak lower semicontinuity of the integral part in \eqref{chidef} and the uniform convergence of $\chi_{n}$ we get that 
$$
\chi(m(t_{1}))\geq \int_{t_{1}}^{t_{2}}\inte H^{*}\left(x,-\frac{w}{m}\right)dm(s)+\mathcal F(m(s))ds+\chi (m(t_{2}))+\lambda (t_{2}-t_{1})
$$
with $m(0)=m_{0}$ because $m^{0}_{n}\rightarrow m_{0}$. The opposite inequality is true by dynamic programming principle and so this proves that $m_{0}\in\mathcal M$ and, eventually, that $\mathcal M$ is closed.

\end{proof}

\subsection{Minimal invariant set and Ergodicity}

We say that a closed subset ${\mathcal C}$ of ${\mathcal M}$ is invariant if, for any $m_0\in\mathcal C$,  there exists a calibrated curve $(m,w)$ such that $m(0)=m_0$ and $m(t)\in{\mathcal C}$ for any $t\in\R$. We say that an invariant set  ${\mathcal C}$ is minimal if  ${\mathcal C}$ does not contains any proper closed invariant subset.

\begin{lemma}\label{mininvset} There exists a minimal set $\mathcal N$.
\end{lemma}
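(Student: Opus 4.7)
The plan is to apply Zorn's lemma to the family
$$
\mathfrak{F} = \{\mathcal C \subset \mathcal M : \mathcal C \text{ is nonempty, closed, and invariant}\}
$$
partially ordered by reverse inclusion. First I would check that $\mathfrak{F}$ is nonempty by observing that $\mathcal M$ itself lies in $\mathfrak F$: it is nonempty by Proposition \ref{nemptms}, closed (indeed compact) by the previous proposition, and invariant because the remark after the definition of $\mathcal M$ notes that translates of calibrated curves are calibrated curves, so from any $m_0 \in \mathcal M$ the calibrated curve witnessing $m_0 \in \mathcal M$ stays in $\mathcal M$ forever.

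Next I would verify the chain condition. Let $\{\mathcal C_\alpha\}_{\alpha \in A}$ be a totally ordered chain in $\mathfrak F$, and put $\mathcal C = \bigcap_\alpha \mathcal C_\alpha$. Closedness of $\mathcal C$ is immediate. Nonemptiness follows from the finite intersection property: since the chain is totally ordered, every finite subfamily has a smallest element, which is nonempty, and $\mathcal M$ is compact. The only real content is invariance of $\mathcal C$, which is where I expect the main work to sit.

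For invariance, fix $m_0 \in \mathcal C$ and, for each $\alpha$, define
$$
K_\alpha = \{(m,w)\in \mathcal E_2^\infty : (m,w)\text{ is a calibrated curve, } m(0)=m_0,\ m(t)\in \mathcal C_\alpha\ \forall t\in\R\}.
$$
Each $K_\alpha$ is nonempty because $\mathcal C_\alpha$ is invariant and $m_0 \in \mathcal C_\alpha$, and the family $\{K_\alpha\}$ is totally ordered by inclusion (since the $\mathcal C_\alpha$ are). The key step is to show that each $K_\alpha$ is compact in the topology of uniform convergence on compact subsets of $\R$ for $m$, together with weak-$\ast$ convergence on compacts for $w$. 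This uses exactly the compactness argument from the proof that $\mathcal M$ is compact: a sequence $(m_n, w_n) \in K_\alpha$ is associated with correctors $\chi_n$ which, by Proposition \ref{remlip}, are uniformly Lipschitz and therefore admit a uniformly convergent subsequence $\chi_n \to \chi$; the curves $(m_n, w_n)$ then have uniform bounds giving a subsequential limit $(m, w)$ with $m(0) = m_0$, and passing to the limit in the calibration identity \eqref{kajehbzredf} shows $(m, w)$ is calibrated for $\chi$; closedness of $\mathcal C_\alpha$ gives $m(t) \in \mathcal C_\alpha$ for all $t$.

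Having established that $\{K_\alpha\}$ is a totally ordered family of nonempty compact sets, the finite intersection property yields $\bigcap_\alpha K_\alpha \neq \emptyset$. Any element of this intersection is a calibrated curve through $m_0$ staying in $\bigcap_\alpha \mathcal C_\alpha = \mathcal C$, so $\mathcal C$ is invariant and thus $\mathcal C \in \mathfrak F$, providing an upper bound (for the reverse-inclusion order) of the chain. Zorn's lemma then furnishes a maximal element of $\mathfrak F$ for reverse inclusion, i.e., a minimal closed invariant subset $\mathcal N \subset \mathcal M$. The main obstacle is precisely the compactness of $K_\alpha$; once that is in place, the argument is a standard Zorn-plus-finite-intersection-property scheme.
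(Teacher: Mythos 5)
Your proof is correct and it supplies precisely the argument the paper omits, citing only ``a standard application of Zorn's Lemma.'' The one nontrivial step --- compactness of the sets $K_\alpha$ of calibrated curves through $m_0$ staying in $\mathcal C_\alpha$ --- you handle by reusing the uniform Lipschitz bound on correctors (Proposition \ref{remlip}) together with the same normalization-and-Arzel\`a--Ascoli compactness scheme already used to prove $\mathcal M$ is closed, which is exactly the right ingredient.
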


We do not present the proof which is a standard application of Zorn's Lemma (see for instance \cite{de2013elements}).

\begin{prop}\label{mdensity} Let $\mathcal N$ be a minimal invariant set. If $m_{0}\in\mathcal N$ and $\{m(t),\;t\in\R \}$ is a calibrated curve such that $m(0)=m_{0}$, then $\{m(t),\;t\in\R \}$ is dense in $\mathcal N$.
\end{prop}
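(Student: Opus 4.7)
The plan is to argue by contradiction with minimality. Let $(m(t),w(t))$ be a calibrated curve with $m(0)=m_0\in\mathcal N$, and define
\[
\mathcal C:=\overline{\{m(t):t\in\R\}}\subseteq\mathcal P(\T^d).
\]
Since $\mathcal N$ is closed and contains the whole orbit $\{m(t)\}$, we have $\mathcal C\subseteq\mathcal N$, and $\mathcal C$ is trivially closed. To conclude by minimality of $\mathcal N$ that $\mathcal C=\mathcal N$, the one real point to verify is that $\mathcal C$ is itself an invariant set in the sense of the preceding definition.

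To show invariance, I would pick an arbitrary $\tilde m_0\in\mathcal C$ and a sequence of times $t_n\in\R$ with $m(t_n)\to\tilde m_0$. I then translate the calibrated curve and set
\[
m_n(t):=m(t+t_n),\qquad w_n(t):=w(t+t_n),\qquad \chi_n:=\chi,
\]
which, from the calibration identity \eqref{kajehbzredf} applied on arbitrary intervals $[t_1,t_2]\subset\R$, is again a calibrated curve (for the same corrector $\chi$) starting from $m_n(0)=m(t_n)$. The compactness argument already used in the proof that $\mathcal M$ is closed applies verbatim: the uniform Lipschitz bounds on correctors (Proposition \ref{remlip}), the bound on $\int|w_n|$ from \eqref{chidef}, the resulting uniform $C^{1/2}$ continuity of $m_n$ on compact intervals, and the weak lower semicontinuity of the Lagrangian give a subsequence along which $(m_n,w_n)$ converges, uniformly on compact sets in $t$ and in the appropriate weak sense for $w_n$, to a limit $(\tilde m,\tilde w)\in\mathcal E_2^\infty$ with $\tilde m(0)=\tilde m_0$, and this limit satisfies
\[
\chi(\tilde m(t_1))=\int_{t_1}^{t_2}\!\!\int_{\T^d} H^{*}\!\left(x,-\frac{\tilde w}{\tilde m}\right)d\tilde m(s)+\mathcal F(\tilde m(s))\,ds+\chi(\tilde m(t_2))+\lambda(t_2-t_1)
\]
for every $t_1<t_2$. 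Thus $(\tilde m,\tilde w)$ is a calibrated curve through $\tilde m_0$.

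It remains to check that the whole orbit of this new calibrated curve stays in $\mathcal C$. For each fixed $t\in\R$, $\tilde m(t)$ is the limit of $m_n(t)=m(t+t_n)$, and each $m(t+t_n)$ belongs to the orbit $\{m(s):s\in\R\}\subseteq\mathcal C$. Since $\mathcal C$ is closed, $\tilde m(t)\in\mathcal C$ for all $t$. Hence $\mathcal C$ is invariant, a closed nonempty subset of the minimal invariant set $\mathcal N$, and minimality forces $\mathcal C=\mathcal N$, which is the desired density.

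The only mildly delicate step is the compactness/passage to the limit in the calibration identity; but since this is exactly the same scheme as in the proof that $\mathcal M$ is closed, no new estimate is needed. The essential input is that calibration is invariant under time translation and stable under uniform-on-compact convergence, together with Proposition \ref{remlip} to guarantee that the corrector $\chi$ associated with the translated curves can be kept fixed (or, at worst, extracted along a subsequence to a limit corrector).
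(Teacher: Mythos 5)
Your proposal is correct and follows essentially the same route as the paper: define $\mathcal C$ as the orbit closure, show invariance by translating the calibrated curve in time, extract a convergent subsequence using the same compactness scheme as in the proof that $\mathcal M$ is closed, and invoke minimality. The one small clarification you add over the paper's write-up — that the translated curves are calibrated for the \emph{same} corrector $\chi$, so no subsequence of correctors needs to be extracted — is valid and mildly streamlines the argument, but does not constitute a different approach.
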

\begin{proof}The proof is analogous to the one we used to prove that $\mathcal M$ is closed. We define $\mathcal C$ the closure of the trajectory $\{m(t)\;t\in\R \}$. $\mathcal C$ is a closed subset of $\mathcal N$, in order to prove that it coincides with $\mathcal N$ we just need to prove that it is invariant or, in other words, that, if $\bar m\in\mathcal C$, then also a calibrated curve passing through $\bar m$ belongs to $\mathcal C$.

 Let $\bar m$ be the limit of $m_{n}=m(t_{n})\in\{m(t),\;t\in\R \}$ and $\{m_{n}(t)\}$ their corresponding calibrated curves. If $w_{n}(t)$ are the control associated to the calibrated curve $m_{n}(t)$ then, as in Lemma \ref{mininvset}, we get that $\Vert w_{n}\Vert_{L^{1}}$ is uniformly bounded on any compact set. As we already pointed out, it implies the uniform convergence of $m_{n}(t)$ on compact sets.  If $\bar m(t)$ is the trajectory to which $m_{n}(t)$ converges, then it must be a calibrated curve starting from $\bar m$ because we imposed that $m_{n}\rightarrow\bar m$. This means that for any $s\in\R$, $\bar m(s)$ is the limit of $m_{n}(s)$. As $\mathcal C$ is closed and $m_{n}(s)\in\mathcal C$, then $\bar m(s)\in\mathcal C$. 
 
 We proved that $\mathcal C$ is a not empty, invariant, closed subset of $\mathcal N$. By the minimality of $\mathcal N$ the two sets must conicide.
\end{proof}

\section{The role of Monotonicity}
So far, the hypothesis on $\mathcal F$ were mostly about its regularity and no structural assumptions were imposed. On the other hand, when we are interested in understanding whether the limit value $\lambda$ coincides with $\bar\lambda$, the ergodic one, the structure of $\mathcal F$ does actually play a fundamental role. In the next section we impose convexity and, as it was already proved in \cite{cardaliaguet2013long2}, we get that $\lambda=\bar\lambda$. More interestingly, in Section \ref{sec2} we provide a class of explicit examples where $\lambda>\bar\lambda$ and, therefore, there is not convergence of the time dependent MFG system to the ergodic one.

\subsection{The convex case: $\lambda=\bar\lambda$}

 In this section we will show that under convexity those two values are the same. 

We assume the following further assumptions on $F$:
\be\label{monot}
\int_{\T^d} (F(x,m_1)-F(x,m_2))d(m_1-m_2)\geq0
\ee 

We introduce the functional $J^{T}(m_{0},\cdot,\cdot):\mathcal E^{T}(m_{0})\rightarrow\R$ defined by
\be
 J^{T}(m_{0},m,w)=\int_0^{T} \int_{\T^d}H^*\left(x,-\frac{dw(t)}{dm(t)}(x)\right)dm(t)+\mathcal F(m(t))dt,
 \ee
 so that
 $$
\mathcal U(T,m_0)=\inf_{(m,w)\in\mathcal E^{T}_{2}(m_{0})}J^{T}(m_{0},m,w).
 $$

Under the monotonicity assumption \eqref{monot}, the functional $J^T$ is convex, therefore we can easily prove that $\lambda=\bar\lambda$. We recall that 
$$
-\bar\lambda=\inf_{(m,w)\in\mathcal E}\int_{\T^d}H^*\left(x,-\frac{dw}{dm}(x)\right)dm(x)+\mathcal F(m).
$$

\begin{prop}Under the above assumptions $\lambda=\bar\lambda$.
\end{prop}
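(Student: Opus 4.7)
The plan is to establish the reverse inequality $\lambda\le\bar\lambda$, since Proposition \ref{mineqlam} already gives $\lambda\ge\bar\lambda$. The whole argument is driven by the fact that monotonicity of $F$ makes $\mathcal F$ convex on $\mathcal P(\T^d)$, and that the pair functional $(m,w)\mapsto\int H^*(x,-dw/dm)\,dm$ is jointly convex on $\mathcal E$. Together these make $J^T$ convex, which is exactly the structure needed to apply Jensen-type arguments in time.

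First I would fix $m_0\in\mathcal P(\T^d)$ and, for each $T>0$, pick a minimizer $(m_T,w_T)\in\mathcal E^T_2(m_0)$ of $\mathcal U(T,m_0)$, whose existence follows from the standard arguments used in Proposition \ref{exmin} adapted to finite horizon. Then I would form the time-averages
\[
\bar m_T=\frac{1}{T}\int_0^T m_T(t)\,dt,\qquad \bar w_T=\frac{1}{T}\int_0^T w_T(t)\,dt,
\]
which are elements of $\mathcal P(\T^d)$ and $\mathcal M(\T^d;\R^d)$ respectively. Integrating the Fokker-Planck constraint $-\partial_t m_T+\Laplace m_T-\dive w_T=0$ in time on $[0,T]$ yields
\[
\Laplace \bar m_T-\dive \bar w_T=\frac{m_T(T)-m_0}{T},
\]
so $(\bar m_T,\bar w_T)$ is an \emph{asymptotically admissible} competitor for the stationary problem \eqref{staticfunc3}, with an error that vanishes in the sense of distributions as $T\to\infty$.

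Next, using the joint convexity of $(m,w)\mapsto\int_{\T^d}H^*(x,-dw/dm)\,dm$ (Jensen in $t$) together with the convexity of $\mathcal F$ implied by \eqref{monot}, I obtain
\[
\frac{1}{T}\mathcal U(T,m_0)=\frac{1}{T}J^T(m_0,m_T,w_T)\ge \int_{\T^d}H^*\!\left(x,-\frac{d\bar w_T}{d\bar m_T}\right)d\bar m_T+\mathcal F(\bar m_T).
\]
Up to extracting a subsequence, $(\bar m_T,\bar w_T)$ converges weakly to a limit $(\bar m,\bar w)$ which, by the displayed identity above, belongs to $\mathcal E$. Lower semicontinuity of the static functional then gives
\[
\liminf_{T\to\infty}\frac{1}{T}\mathcal U(T,m_0)\ge \int_{\T^d}H^*\!\left(x,-\frac{d\bar w}{d\bar m}\right)d\bar m+\mathcal F(\bar m)\ge -\bar\lambda.
\]
Combined with Theorem \ref{utvdconv}, the left-hand side equals $-\lambda$, so $\lambda\le\bar\lambda$, and together with Proposition \ref{mineqlam} the equality follows.

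The main obstacle I expect is the rigorous handling of the Jensen step and of the limit passage: one needs the correct convexity statement for $(m,w)\mapsto\int H^*(x,-dw/dm)\,dm$ as a functional of measures (not merely pointwise in $x$), as well as the lower semicontinuity of this functional along weakly converging sequences in $\mathcal P(\T^d)\times\mathcal M(\T^d;\R^d)$. These are, however, standard in the Benamou–Brenier/MFG framework and are precisely the ingredients used in \cite{cardaliaguet2015second,briani2016stable}, so the argument reduces to quoting them and verifying that the weak limit $(\bar m,\bar w)$ indeed satisfies $\Laplace\bar m-\dive\bar w=0$ thanks to the $O(1/T)$ bound.
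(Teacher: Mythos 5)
Your proof is correct in its core idea and essentially identical in strategy to the paper's (time-average the trajectory and apply Jensen using the joint convexity of the Benamou--Brenier functional and the convexity of $\mathcal F$ forced by \eqref{monot}). The difference is in how the boundary terms are handled. The paper invokes Proposition~\ref{3repr} and takes $m_1=m_0$ smooth and bounded below, so that it can work on the space of loops $\Pi_T(m_0,m_0)$: integrating the Fokker--Planck constraint over $[0,T]$ with $m^T(T)=m^T(0)=m_0$ makes the time-average $(\bar m^T,\bar w^T)$ \emph{exactly} a member of $\mathcal E$, and the bound $\frac{1}{T}J^T\ge -\bar\lambda$ holds pointwise in $T$ with no limit passage in the static functional. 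You instead time-average a minimizer of the free problem $\mathcal U(T,m_0)$, which leaves the residual $(m_T(T)-m_0)/T$ in the stationary equation, and then you pass to a weak limit $(\bar m,\bar w)$ and invoke lower semicontinuity of the Benamou--Brenier functional (plus continuity of $\mathcal F$) to conclude $\int H^*(x,-d\bar w/d\bar m)\,d\bar m+\mathcal F(\bar m)\le -\lambda$ with $(\bar m,\bar w)\in\mathcal E$. Your route works, and it avoids invoking Proposition~\ref{3repr}, but at the cost of two extra ingredients the paper does not need here: the weak lower semicontinuity of the kinetic energy functional on $\mathcal P(\T^d)\times\mathcal M(\T^d;\R^d)$, and the verification that the weak limit $\bar w$ is absolutely continuous with respect to $\bar m$ with $L^2_{\bar m}$ density so that $(\bar m,\bar w)$ truly lies in $\mathcal E$. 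Both are indeed standard and available in \cite{cardaliaguet2015second,briani2016stable}, so your argument is sound; the paper's choice of loops is just a way to sidestep them entirely.
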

\begin{proof} In order to prove the proposition we use the representation of $\lambda$ that we discussed in Proposition \ref{3repr}:
\be
-\lambda=\lim_{T\rightarrow\infty}\frac{1}{T}\inf_{\Pi_T(m_0,m_1)}\int_0^T\int_{\T^d}H^*(x,\alpha)dm(s)+\mathcal F(m(s))ds.
\ee

Given that $\lambda$ does not depend on the initial value $m_{0}$, we take $m_{0}=m_{1}$. We can also suppose that $m_{0}$ is smooth and bounded from below by a positive constant, so that we can apply Proposition \ref{3repr}. We now consider any admissible $(m^T,w^T)$ for $\Pi(m_{0},m_{0})$ and we define $\bar m^T=\frac{1}{T}\int_0^Tm^Tdt$ and $\bar w^T=\frac{1}{T}\int_0^T w^Tdt$. Given that the $m^T(0)=m^T(T)=m_{0}$, the couple $(\bar m^T,\bar w^T)$ verifies, in sense of distributions, $-\Laplace m+{\rm div}(w)=0$ and it is an admissible competitor for the stationary problem. 

Now we just need to apply Jensen's inequality to get

$$
J^T(m_{0},m^T,w^T)=\frac{1}{T}\int_0^T\int_{\T^d}H^*(x,w^T/m^T)dm^T+\mathcal F(m^T)dt\geq \int_{\T^d}H^*(x,\bar w^T/\bar m^T)d\bar m^T+\mathcal F(\bar m^T).
$$
If we take the infimum over $(m^{T},w^{T})\in\Pi_{T}(m_{0},m_{0})$ and we take the limit in $T$, we end up with
$$
-\lambda\geq\lim_{T\rightarrow\infty}\inf_{\Pi_T(m_0,m_0)}\int_{\T^d}H^*(x,\bar w^T/\bar m^T)d\bar m^T+\mathcal F(\bar m^T)\geq-\bar\lambda.
$$

We already proved the opposite inequality in Proposition \ref{mineqlam}, which we recall that it holds true also outside the monotonicity assumption.
\end{proof}

As $\lambda=\bar\lambda$, the dynamic programming principle for $\chi$ reads 

$$
\chi(m(t_{1}))= \int_{t_{1}}^{t_{2}}\inte H^{*}\left(x,-\frac{w}{m}\right)dm(s)+\mathcal F(m(s))ds+\chi (m(t_{2}))+\bar\lambda (t_{2}-t_{1}).
$$
If $(\bar m,\bar w)$ is a minimizer for the static MFG problem then, if we define $(m(t),w(t))=(\bar m,\bar w)$ for any $t\in\R$, the relation above holds true. This means that the constant trajectory $(\bar m,\bar w)$ is a calibrated curve so that $\bar m\in\mathcal M$. Moreover, as the calibrated curve is stationary, the singleton $\mathcal N=\{\bar m\}$ is a minimal invariant set because it is closed, invariant and it cannot contain any proper subset.

\subsection{A non convex example where $-\lambda<-\bar\lambda$}\label{sec2}

We now present an example where the non convexity of $\mathcal F$ leads to an ergodic configuration where the limit value $-\lambda$ is strictly lower then the ergodic one $-\bar\lambda$. A straightforward consequence will be that there cannot be any stationary calibrated curve, which in turn implies that any calibrated curve in a minimal invariant set has to be either periodic or chaotic. 
We say that a calibrated curve $m(t)$ in a minimal invariant set $\mathcal N$ has a chaotic behavior if its trajectory is strictly included in $\mathcal N$.

We fix $e_{d}\in\R^{d}\setminus\{0\}$ a unit vector parallel to one of the axes and we identify $\T^{d}$ with $\T^{d-1}\times\T$ where $\T$ is the torus identified by the direction $e_{d}$. We fix also $H$ such that  $H^{*}$ verifies the following conditions:  $H^{*}(x,p)>0$ for any $x\in\T^{d}$, $p\neq-e_{d}$ and $H^{*}(x,-e_{d})=0$ for any $x\in\T^{d}$.
The assumption \eqref{1} on the hamiltonian $H$ implies that there exist two constants $C_{1}>0$, $C_{2}>0$ such that
\begin{align}\label{gcond2}
C_{1}I_{d}\leq D_{pp}H^{*}(x,p)\leq C_{2}I_{d}\qquad\qquad\forall x\in\T^{d},\,\forall p\in\R^{d}.
\end{align}

Let us define the set $\mathcal A\subset\mathcal P(\T^{d})$ as the set of $\mu\in\mathcal P(\T^{d})$ for which there exits $\mu'\in\mathcal P(\T^{d-1})$ such that $\mu=\mu'\otimes dx_{d}$, where $dx_{d}$ si the Lebesgue measure on $\T$.
Note that $\mu\in\mathcal A$ if and only if ${\rm div}(e_{d}\mu)=0$.

 We fix $m_0:\T^{d}\rightarrow\R$ a smooth, strictly positive density such that $m_{0}\notin \mathcal A$. A measure $m$ belongs to the set $\mathcal B$ if there exists $z\in\T^{d}$ such that $m(\cdot)=m_{0}(\cdot+z)$. As $\mathcal A$ and $\mathcal B$ are closed and disjoint, they are separated by a positive distance $\varepsilon>0$.
 
 We choose $\mathcal F:\mathcal P(\T^{d})\rightarrow\R$ such that $\mathcal F\geq 0$, $\mathcal F\equiv 2$ in $\mathcal A$ and $\mathcal F\equiv 0$ in $\mathcal B$. The existence of such a function is ensured by Lemma \ref{199} and Lemma \ref{200} in Appendix. They also guarantee that we can choose $\mathcal F$ such that it verifies the regularity assumptions that were in place in the previous sections.
 
We recall that the functional $J^{T}(\mu,\cdot,\cdot)$ is defined on $\mathcal E_{2}^{T}(\mu)$ by
\be
 J^{T}(\mu,m,w)=\int_0^{T} \int_{\T^{d}}H^{*}\left(x,-\frac{dw(t)}{dm(t)}(x)\right)dm(t)+\mathcal F(m(t))dt.
 \ee
 
In this section we add in the definition of $\mathcal E_{2}^{T}(\mu)$ a viscosity constant $\sigma>0$ so that $(m,w)$ verifies $-\partial_{t}m+\sigma\Laplace m-{\rm div}(w)=0$.

We also define the ergodic functional $J:\mathcal E\rightarrow\R$ as follows
\be\label{staticfunc2}
J(m,w)= \int_{\T}H^{*}\left(x,-\frac{dw(t)}{dm(t)}(x)\right)dm(t)+\mathcal F(m(t))dt,
\ee
 where in this framework, $(m,w)$ verifies $\sigma\Laplace m-{\rm div} w=0$. According to the definition of $\bar\lambda$ in \eqref{staticfunc3}, we have
\be\label{staticfunc5}
-\bar\lambda=\inf_{(m,w)\in\mathcal E}J(m,w)
\ee

\begin{prop} There exists a $\sigma_{0}>0$ such that for any $\sigma\in(0,\sigma_{0}]$ we have $-\lambda<-\bar\lambda$.
\end{prop}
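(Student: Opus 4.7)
The plan is to sandwich $-\lambda$ and $-\bar\lambda$ by proving two one-sided estimates that separate for small $\sigma$: an upper bound $-\lambda\le K\sigma^{2}$ coming from an explicit traveling-wave competitor, and a uniform lower bound $-\bar\lambda\ge c_{0}>0$ valid for all sufficiently small $\sigma$. Both quantities are nonnegative since $H^{*}\ge 0$ and $\mathcal F\ge 0$, so what is really needed is that time-dependent trajectories can stay inside $\mathcal B$ at negligible kinetic cost, while any stationary competitor is forced, in the small-$\sigma$ limit, toward $\mathcal A$ where $\mathcal F\equiv 2$.

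\textbf{Upper bound on $-\lambda$.} By Theorem \ref{utvdconv}, $-\lambda=\lim_{T\to\infty}\mathcal U(T,m_{0})/T$. I use the traveling pair $m(t,x):=m_{0}(x-t e_{d})$ together with $w(t,x):=m(t,x)\,e_{d}+\sigma\nabla_{x}m(t,x)$. A direct computation shows $(m,w)$ satisfies the Fokker-Planck equation $-\partial_{t}m+\sigma\Laplace m-\dive(w)=0$ on $[0,T]\times\T^{d}$, and the associated drift is $\alpha=-w/m=-e_{d}-\sigma\nabla\log m$. Since $m(t,\cdot)$ is a translate of $m_{0}$, which is smooth and strictly positive, $\|\nabla\log m\|_{L^{\infty}}$ is bounded by some $K_{0}$ independent of $t$ and $\sigma$. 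Because $-e_{d}$ is the global minimum of $H^{*}(x,\cdot)$ and $H^{*}$ is smooth, $H^{*}(x,-e_{d})=0$ and $\nabla_{p}H^{*}(x,-e_{d})=0$; the upper bound $D_{pp}H^{*}\le C_{2}I$ from \eqref{gcond2} and Taylor's formula give $H^{*}(x,\alpha)\le \tfrac{C_{2}}{2}\sigma^{2}|\nabla\log m|^{2}\le \tfrac{C_{2}K_{0}^{2}}{2}\sigma^{2}$. Moreover $m(t)\in\mathcal B$ for every $t$, hence $\mathcal F(m(t))=0$. Therefore $\mathcal U(T,m_{0})\le K\,T\sigma^{2}$ with $K:=C_{2}K_{0}^{2}/2$, and letting $T\to\infty$ yields $-\lambda\le K\sigma^{2}$.

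\textbf{Uniform lower bound on $-\bar\lambda$.} I argue by contradiction. If it fails, there exist $\sigma_{n}\to 0^{+}$ and admissible stationary pairs $(m_{n},w_{n})\in\mathcal E$ (relative to $\sigma_{n}$) with $\inte H^{*}(x,\alpha_{n})\,dm_{n}+\mathcal F(m_{n})\to 0$, where $\alpha_{n}:=-w_{n}/m_{n}$. Both terms being nonnegative, each vanishes. The strong convexity $D_{pp}H^{*}\ge C_{1}I$ from \eqref{gcond2} together with $H^{*}(\cdot,-e_{d})=0$ and $\nabla_{p}H^{*}(\cdot,-e_{d})=0$ yields $\int |\alpha_{n}+e_{d}|^{2}\,dm_{n}\to 0$. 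Setting $\beta_{n}:=\alpha_{n}+e_{d}$, Cauchy-Schwarz gives $\|m_{n}\beta_{n}\|_{L^{1}(\T^{d})}\le (\int m_{n})^{1/2}(\int |\beta_{n}|^{2}\,dm_{n})^{1/2}\to 0$. The stationary equation $\sigma_{n}\Laplace m_{n}-\dive(w_{n})=0$ rewrites, using $w_{n}=m_{n}e_{d}-m_{n}\beta_{n}$, as $\partial_{x_{d}}m_{n}=\sigma_{n}\Laplace m_{n}+\dive(m_{n}\beta_{n})$; both terms on the right converge to $0$ in $\mathcal D'(\T^{d})$ (the first because $\{m_{n}\}$ is bounded in total mass and $\sigma_{n}\to 0$, the second because $m_{n}\beta_{n}\to 0$ in $L^{1}$). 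Extracting a weak limit $m_{n}\rightharpoonup m_{\infty}$ in $(\mathcal P(\T^{d}),\bm d)$ one gets $\partial_{x_{d}}m_{\infty}=0$, hence $m_{\infty}\in\mathcal A$. On the other hand $\mathcal F$ is continuous on $\mathcal P(\T^{d})$, so $\mathcal F(m_{\infty})=\lim\mathcal F(m_{n})=0$, while $\mathcal F\equiv 2>0$ on $\mathcal A$, a contradiction.

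\textbf{Conclusion and main difficulty.} Combining the two bounds, any $\sigma_{0}>0$ with $K\sigma_{0}^{2}<c_{0}$ works: for every $\sigma\in(0,\sigma_{0}]$, $-\lambda\le K\sigma^{2}\le K\sigma_{0}^{2}<c_{0}\le -\bar\lambda$. The main obstacle is the contradiction step: passing to the limit in $\partial_{x_{d}}m_{n}-\sigma_{n}\Laplace m_{n}=\dive(m_{n}\beta_{n})$ without any a priori $H^{1}$-control on $m_{n}$ is delicate, and is precisely what the Cauchy-Schwarz estimate, using only the unit total mass of $m_{n}$, makes possible.
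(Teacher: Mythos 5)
Your proof is correct, and the first half coincides with the paper's argument: the same traveling-wave competitor $m(t,x)=m_0(x-e_d t)$, $w=me_d+\sigma Dm$ (giving $\mathcal F(m(t))\equiv 0$ and $H^*(x,\alpha)\le\frac{C_2}{2}|\alpha+e_d|^2=O(\sigma^2)$) yields $-\lambda\le K\sigma^2$. The lower bound on $-\bar\lambda$, however, follows a genuinely different route. The paper first invokes Fenchel--Rockafellar duality and Schauder theory to show the static minimizer $\bar m$ is smooth and bounded below, then decomposes $\bar w=\sigma D\bar m+\zeta$ with $\dive\zeta=0$ and uses the orthogonality $\int\zeta\cdot D\bar m/\bar m=0$ to expand the square and drop both the $\sigma^2|D\bar m|^2/\bar m$ term and the cross term; this produces a relaxed infimum over pairs $(m,\xi)$ with $\dive\xi=0$ that is entirely $\sigma$-independent and is then shown to be strictly positive by the same Cauchy--Schwarz/compactness argument you use. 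You bypass duality and regularity altogether: you stay at the level of the distributional stationary Fokker--Planck constraint, rewrite it as $\partial_{x_d}m_n=\sigma_n\Laplace m_n+\dive(m_n\beta_n)$, and send $\sigma_n\to 0$ so that the viscous term dies together with the error term, using only the unit mass of $m_n$, weak-$*$ compactness of $\mathcal P(\T^d)$, and the continuity of $\mathcal F$. The concluding contradiction ($m_\infty\in\mathcal A$ but $\mathcal F(m_\infty)=0\neq 2$) is identical in spirit. The trade-off: your argument is more elementary and needs no smoothness of the static minimizer, but yields a lower bound on $-\bar\lambda$ only for $\sigma$ small, whereas the paper's decomposition gives a bound uniform in all $\sigma>0$. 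For this proposition that difference is immaterial.
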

\begin{proof}
 We define 
\begin{align}
m (t,x)=m_0(x-e_{d} t),
\end{align}
and 
\begin{align}
w(t,x)=e_{d} m(t,x)+\sigma Dm_{0}(x-e_{d} t).
\end{align}
The couple $(m, w)$ belongs to $\mathcal E_{2}^{T}(m_{0})$, so $-\lambda\leq J^{T}( m, w)$. By definition of $\mathcal{F}$, we know that $\mathcal{F}(m(t))=0$ for any time $t$. Moreover, since $D_{pp}L\leq C_2I_d$ with $0= H^{*}(x,-e_{d})\leq L(x,\alpha)$, we have $H^{*}(x,\alpha)\leq \frac{1}{2}C_2|\alpha+e_{d}|^2$. Thus 
$$
-\lambda  \leq\frac{1}{T}\int_{0}^{T}\int_{\T^{d}}\frac{C_2}{2} \left|\frac{ w(t,x)}{m(t,x)}-e_{d}\right|^2 m(t,x) dxdt = \frac{C_2}{2}\sigma^2 \int_{\T^{d}} \frac{|Dm_0(x)|^2}{m_0(x)} dx=\sigma^{2}I,
$$
where $I=\frac{C_2}{2} \int_{\T^{d}} \frac{|Dm_0(x)|^2}{m_0(x)} dx$.

We now focus on the static case. We recall that the differential constraint on $J$ is $-\sigma \Laplace m+{\rm div}w=0$. By standard arguments we have that there exists a minimizer $(\bar m,\bar w)$ of \eqref{staticfunc2}.  

As in the proof of Proposition \ref{exmin} we can define a dual problem which reads 
$$
\inf_{(u,\lambda)\in C^{2}(\T^{d})\times\R}\left\{\lambda\mbox{ s.t.}-\lambda-\Laplace u+H(x,Du)\leq F(x,\bar m)\right\}.
$$
 Thanks to the regularity of $F(\cdot,\bar m)$ we have a smooth solution $(\bar u,\bar\lambda)$ which solves $-\bar\lambda+\Laplace \bar u+H(x,D\bar u)=F(x,\bar m)$. By duality, if we argue as in Proposition \ref{exmin} (see \cite{briani2016stable}) we get that $\bar w=-\bar mD_{p}H(x,D\bar u)$, so that, by Schauder theory, $\bar m$ is smooth and bounded from below.

We can now estimate $-\bar\lambda$. 
Thanks to the regularity of $(\bar m,\bar w)$, the parabolic constraint ensures that $\bar w=\sigma D\bar m+\zeta$ where $\zeta$ is a smooth, divergence free vector field. If $(\bar m, \bar w)=( \bar m, \sigma D\bar m+ \zeta)$ is a minimizer of \eqref{staticfunc2} and we use the growth assumption \eqref{gcond2}, we have

\begin{align}\label{lastine}
\int_{\T^{d}}H^{*}\left(x,\frac{\sigma D\bar m+\zeta}{\bar m}\right)\bar m(dx)+\mathcal F(\bar m)\geq \frac{C_{1}}{2}\int_{\T^{d}}\left|\frac{\sigma \bar Dm+\zeta}{\bar m}+e_{d}\right|^2\bar m(dx)+{\mathcal F}(\bar m).
\end{align}

Note that, as ${\rm div}\zeta=0$ and $\bar m$ is smooth and bounded from below, $\int_{\T^{d}}\zeta\cdot D\bar m/\bar m=0$. Indeed,
$$
\int_{\T^{d}}\frac{D\bar m}{\bar m}\zeta dx=\int_{\T^{d}}D(\ln (\bar m))\zeta dx=-\int_{\T^{d}}\ln(\bar m){\rm div}(\zeta) dx=0.
$$
Therefore, if we expand the square in \eqref{lastine}, we get 
\be\label{aaaa}
\int_{\T}H^{*}\left(x,\frac{\sigma D\bar m+\zeta}{\bar m}\right)\bar m(dx)+{\mathcal F}(\bar m) \geq \frac{C_{1}}{2}\int_{T^{d}}\sigma^{2}\frac{|D\bar m|^{2}}{\bar m}+\left|\frac{\zeta}{\bar m}+e_{d}\right|^{2}\bar mdx\geq \frac{C_{1}}{2}\int_{T^{d}}\left|\frac{\zeta}{\bar m}+e_{d}\right|^{2}\bar mdx.
\ee
Plugging \eqref{aaaa} into \eqref{staticfunc5} we eventually find that

$$
-\bar\lambda\geq\frac{C_{1}}{2}\int_{T^{d}}\left|\frac{\zeta}{\bar m}+e_{d}\right|^{2}\bar mdx+\mathcal F(\bar m).
$$
The righthand side of the above inequality is bounded from below by a positive constant independent of $\sigma$. Indeed, we know that, for any $\sigma$, $\bar m>0$, so
$$
-\bar\lambda\geq\inf_{(m,\xi)}\int_{T^{d}}\left|\frac{\xi}{ m}+e_{d}\right|^{2} mdx+\mathcal F(m)
$$
where the infimum is taken over all the probability densities $m>0$ and the free divergence vectors $\xi$. Here, $m$ and $\xi$ do not verify the elliptic constraint, therefore we lose the dependence on $\sigma$. 

Let $(m_{n},\xi_{n})$ be a minimizing sequence and $m\in\mathcal P(\T^{d})$ a the limit of $m_{n}$ (the existence of $m$ is guaranteed by the compactness of $\mathcal P(\T^{d})$). If the infimum were achieved at zero then ${\rm div}(m_{n}e_{d})\rightarrow \dive(me_{d})=0$. Indeed, as both the addends should converge to zero, for any test function $\varphi$, we have

$$
\left|\int_{\T^{d}}{\rm div}(m_{n}e_{d})\varphi dx\right|=\left|\int_{\T^{d}}m_{n}e_{d}\cdot D\varphi dx\right|=\left|\int_{\T^{d}}(m_{n}e_{d}-\xi_{n})\cdot D\varphi dx\right|
$$
$$
\leq\left(\int_{\T^{d}}\frac{|m_{n}e_{d}-\xi|^{2}}{m_{n}}dx\right)^{1/2}\left(\int_{\T^{d}}|D\varphi|^{2}m_{n}dx\right)^{1/2}\rightarrow 0
$$

 On the other hand, if ${\rm div}(me_{d})=0$, then, by construction of $\mathcal F$, we have $\mathcal F(m_{n})\rightarrow\mathcal F(m)=2$. Therefore, there exists a constant $K>0$ independent of $\sigma$ such that $-\bar\lambda>K$.

We can conclude the proof choosing $\sigma$ small enough such that

\begin{align}
0\leq-\lambda\leq \sigma^{2}I<K\leq-\bar\lambda.
\end{align}
\end{proof}

\begin{prop}Under the hypothesis of Subsection \ref{sec2} the projected Mather set $\mathcal M$ does not contain any stationary calibrated curve. Moreover, if $m_{0}$ belongs to a minimal invariant set $\mathcal N$ and $m(t)$ is a calibrated curve starting from $m_{0}$ then $m(t)$ is either periodic or it has a chaotic behavior.
\end{prop}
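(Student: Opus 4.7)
Both statements will be derived from the calibration identity \eqref{kajehbzredf} combined with the strict gap $-\lambda<-\bar\lambda$ established in the previous proposition.

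For the non-existence of stationary calibrated curves, I argue by contradiction. Assume $(\bar m,\bar w)\in\mathcal E_{2}^{\infty}$ is a calibrated curve independent of time, i.e.\ $\bar m(t)\equiv\bar m$ and $\bar w(t)\equiv\bar w$. Then the parabolic continuity equation collapses to the elliptic constraint $\Laplace\bar m-\dive\bar w=0$, so $(\bar m,\bar w)\in\mathcal E$. Inserting $t_{1}=0$, $t_{2}=1$ in \eqref{kajehbzredf} and cancelling the common term $\chi(\bar m)$ yields
\[
-\lambda=\int_{\T^{d}}H^{*}\!\left(x,-\frac{d\bar w}{d\bar m}(x)\right)d\bar m+\mathcal F(\bar m)\;\geq\;-\bar\lambda
\]
by the definition \eqref{staticfunc3} of $\bar\lambda$, contradicting the strict inequality $-\lambda<-\bar\lambda$.

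For the dichotomy, fix $m_{0}\in\mathcal N$ and let $(m(t),w(t))$ be a calibrated curve with $m(0)=m_{0}$; by the previous step $m$ is non-constant, and by Proposition \ref{mdensity} its trajectory $\mathcal O=\{m(t):t\in\R\}$ is dense in $\mathcal N$. If there exist $t_{1}<t_{2}$ with $m(t_{1})=m(t_{2})$, set $T=t_{2}-t_{1}$: then the time-translate $\tilde m(\cdot)=m(\cdot+T)$ is again a calibrated curve (the identity \eqref{kajehbzredf} is translation-invariant in $t$) and agrees with $m$ at $t_{1}$. By Proposition \ref{duplc} both curves solve the same smooth MFG system from the common datum $m(t_{1})$, and forward uniqueness of this Fokker--Planck--HJB pair (extracted from the Lasry--Lions variational duality used in Propositions \ref{exmin} and \ref{duplca}) propagates the equality to all times, yielding $T$-periodicity of $m$.

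Otherwise $t\mapsto m(t)$ is injective on $\R$. Then $\mathcal O$ must be strictly contained in $\mathcal N$: by compactness there is $t_{n}\to+\infty$ with $m(t_{n})\to\bar m\in\mathcal N$; if one had $\bar m=m(s)$ for a finite $s$, picking $M>|s|$ makes $m\big|_{[-M,M]}$ a continuous bijection from the compact interval onto its image, hence a homeomorphism, so reading the convergence $m(t_{n})\to m(s)$ through $m^{-1}$ together with the $C^{1/2}$ equicontinuity of Lemma \ref{2est} forces $t_{n}$ to accumulate near $s$, contradicting $t_{n}\to+\infty$. Hence $\bar m\in\mathcal N\setminus\mathcal O$ and $\mathcal O\subsetneq\mathcal N$, which is the chaotic case by definition. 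The main obstacle is the forward uniqueness in the periodic case: upgrading a single coincidence $m(t_{1})=m(t_{2})$ to full $T$-periodicity requires combining the smoothness of Proposition \ref{duplc} with the comparison principle for the HJB equation, and is more delicate than the essentially topological reasoning needed in the injective case.
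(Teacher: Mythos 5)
Your argument for the nonexistence of stationary calibrated curves matches the paper's: plug the constant trajectory into the calibration identity, compare against the definition of $\bar\lambda$, and contradict $\lambda>\bar\lambda$. That part is fine.

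The dichotomy argument is where you depart from the paper, and the departure introduces a genuine gap. The paper's proof of the second claim is essentially definitional: it invokes Proposition \ref{mdensity} to say the trajectory is dense in $\mathcal N$, and then splits on whether the trajectory is a closed set. If it is, it equals $\mathcal N$ and the paper calls the curve periodic; if it is not, the trajectory is strictly contained in $\mathcal N$, which is exactly the paper's definition of chaotic. Your case split (recurrent vs.\ injective) is different, and the recurrent branch leans on ``forward uniqueness of this Fokker--Planck--HJB pair'' to upgrade a single coincidence $m(t_1)=m(t_2)$ to $T$-periodicity. But this uniqueness is precisely what is \emph{not} available here: the whole point of Subsection \ref{sec2} is that the coupling is non-monotone, so the Lasry--Lions uniqueness mechanism does not apply, and nothing in Propositions \ref{exmin} or \ref{duplca} establishes uniqueness of solutions to the planning MFG system from given boundary data. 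Two distinct calibrated curves could in principle emanate from $m(t_1)$, so recurrence does not automatically give periodicity.

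Your injective branch also has a soft spot. You take $t_n\to+\infty$ with $m(t_n)\to\bar m$ and want $\bar m\notin\{m(t):t\in\R\}$. The homeomorphism argument on $m|_{[-M,M]}$ only controls preimages of points that lie in $m([-M,M])$; it does not prevent $m(t_n)$ from converging to $m(s)$ from \emph{outside} that compact piece of the orbit. An injective non-periodic curve can approach its own earlier points arbitrarily closely (dense winding phenomena), so $\bar m=m(s)$ with $t_n\to\infty$ is not contradictory. The clean way to reach the conclusion in this branch is the one the paper uses: by Proposition \ref{mdensity} the orbit is dense; if it were all of $\mathcal N$ it would be closed, which is exactly the alternative case; otherwise it is a proper dense subset of $\mathcal N$, which is the definition of chaotic. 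No extra topological argument about limit points is needed.
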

\begin{proof} We recall that if $m(t)$ is a calibrated curve then

$$
\chi(m(t_{1}))-\chi (m(t_{2}))= \int_{t_{1}}^{t_{2}}\int_{\T^{d}} H^{*}\left(x,-\frac{w}{m}\right)dm(s)+\mathcal F(m(s))ds+\lambda (t_{2}-t_{1}).
$$

If $m(t)$ is constantly equal to $\bar m$ then we have that $\int_{\T}L\left(x,-{\bar w}/{\bar m}\right)dm+\mathcal F(\bar m)=-\bar\lambda$. As $-\lambda<-\bar\lambda$, it implies
$$
\chi(\bar m)-\chi (\bar m)= \int_{t_{1}}^{t_{2}}\int_{\T^{d}} H^{*}\left(x,-\frac{\bar w}{\bar m}\right)d\bar m(s)+\mathcal F(\bar m)ds+\lambda (t_{2}-t_{1})=(t_{2}-t_{1})(-\bar\lambda+\lambda)>0,
$$
so the contradiction.

Moreover, if $m(t)$ is a calibrated curve in $\mathcal N$, we proved in Proposition \ref{mdensity} that its trajectory has to be dense in $\mathcal N$. If the trajectory is closed then the curve is periodic and $\{m(t)\;t\in\R\}=\mathcal N$. Otherwise $\{m(t)\;t\in\R\}\subsetneq\mathcal N$ and $m(t)$ is chaotic.
\end{proof}

\section{Appendix}
We present here the result used in Proposition \ref{duplc}, which is part of an on-going work with Marco Cirant.
\begin{lemma}\label{exdualpl} Let $(\bar m,\bar w)$ be as in Proposition \ref{duplc}. For any $0\leq t_{1}<t_{2}$, the minimization problem 
$$
\bar A_{t_{1}}^{t_{2}}=\inf_{u\in \mathcal K}A_{t_{1}}^{t_{2}}(u)=\inf_{u\in \mathcal K}\left\{\int_{T^{d}}u(x,t_{2})d\bar m(t_{2})-\int_{T^{d}}u(x,t_{1})d\bar m(t_{1})\right\}
$$
where $\mathcal K$ is the set of $u\in C^{1,2}([t_{1},t_{2}]\times\T^{d})$ such that $-\partial_t u-\Delta u +H(x,Du)= F(x,\bar m)$ and $\int_{\T^{d}}u(t_{1},x)dx=0$, admits a solution.
\end{lemma}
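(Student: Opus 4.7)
Since $\bar m \in C^{1,2}(\R \times \T^d)$ and $\bar m > 0$ (Proposition~\ref{duplc} together with Schauder theory, as noted inside the proof of Proposition~\ref{duplca}), the source $F(x, \bar m(t,x))$ is smooth on $[t_1, t_2] \times \T^d$, and standard parabolic theory for viscous HJB with quadratic-type Hamiltonian (assumption~\eqref{1}) gives a unique classical $u_\phi \in C^{1,2}$ solving the backward problem with $u_\phi(t_2,\cdot) = \phi$ for every $\phi \in C^2(\T^d)$. Thus $\mathcal K$ is parametrized by such terminal data; the invariance $u_{\phi+c} = u_\phi + c$ shows the constraint $\int u_\phi(t_1)\,dx = 0$ just fixes an additive constant, so the problem reduces to
\[
\bar A_{t_1}^{t_2} = \inf_{\phi \in C^2(\T^d)/\R} G(\phi), \qquad G(\phi) := \int_{\T^d} \phi\, d\bar m(t_2) - \int_{\T^d} u_\phi(t_1)\, d\bar m(t_1).
\]
Parabolic comparison together with convexity of $H$ in $p$ makes $\phi \mapsto u_\phi$ concave, so $G$ is convex.

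\textbf{A priori estimates and compactness.} Take a minimizing sequence $\{\phi_n\}$ normalized so $\int u_{\phi_n}(t_1)\,dx = 0$. The weak duality between \eqref{PMFGc} and \eqref{dual}, which holds independently of the existence of a dual minimizer (Remark~\ref{justif}), gives $\bar A_{t_1}^{t_2} \geq -J(\bar m,\bar w) \in \R$, and $G(\phi) \leq G(0)$ for $\phi$ the canonical solution with $\phi = 0$; hence $G(\phi_n)$ is bounded. Combined with the normalization and $\bar m(t_i) \geq c_0 > 0$, this produces $L^1$-type control on $u_{\phi_n}(t_1)$ and on $\phi_n = u_{\phi_n}(t_2)$. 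To upgrade this to uniform $C^{2,\alpha}$ estimates, I would regularize the terminal data $\phi_n \leadsto \tilde\phi_n$ (via inf-convolution at a scale $\varepsilon_n \to 0$, for instance) so that $\tilde\phi_n$ is semiconcave with a uniform constant, while showing $|G(\tilde\phi_n) - G(\phi_n)| \to 0$ by continuous dependence of $u_\phi$ on $\phi$ in the uniform norm. Once terminal semiconcavity is under control, the Bernstein-type argument of Lemma~\ref{2est} closes: the maximum of $D^2 u\,\xi\cdot\xi$ is either interior (and controlled by the structure of $H$ and boundedness of $D^2_{xx}F$), or it lies at $t=t_2$ (where it is controlled by $\tilde\phi_n$); inequality~\eqref{uine} then gives uniform Lipschitz bounds, and parabolic Schauder bootstraps to uniform $C^{2,\alpha}$.

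\textbf{Passage to the limit.} By Arzel\`a--Ascoli, $u_{\tilde\phi_n} \to \bar u$ in $C^{1,2}([t_1,t_2]\times\T^d)$ along a subsequence; passing to the limit in the HJB equation gives $\bar u \in \mathcal K$, and continuity of $G$ in the $C^1$-topology, together with a diagonal extraction in $\varepsilon_n$, yields $G(\bar u) = \bar A_{t_1}^{t_2}$. Normalizing $\bar u$ so that $\int \bar u(t_1)\,dx = 0$ gives the sought minimizer.

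\textbf{Main obstacle.} The delicate step is the terminal regularization: one needs quantitative stability of the HJB flow with respect to a smoothing $\phi_n \leadsto \tilde\phi_n$ when only a weak norm of $\phi_n - \tilde\phi_n$ (e.g.\ uniform) is controlled, since the Lipschitz norm of $\phi_n$ is not a priori bounded along a minimizing sequence. Making this precise requires exploiting the uniform convexity of $H$ together with the smoothness and positivity of $\bar m$, and is the technical heart of the result; this is consistent with the statement being attributed to on-going joint work with M.~Cirant.
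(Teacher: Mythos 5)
Your approach is a genuinely different route from the paper's, and it contains a gap that the paper's argument is specifically designed to avoid.

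The paper does \emph{not} work on $[t_1,t_2]$ with a parametrization by terminal data. Instead, it enlarges the interval to $[t_1,t_3]$ with $t_3>t_2$ and exploits two facts. First, an additivity property $\bar A_{t_1}^{t_3}=\bar A_{t_1}^{t_2}+\bar A_{t_2}^{t_3}$, which is obtained by chaining the convex duality \eqref{dual} with the dynamic programming principle of the corrector $\chi$ and the optimality of $(\bar m,\bar w)$ for it. Second, a Bernstein estimate that is \emph{interior in time}: setting $w=D^2 u\,\xi\cdot\xi$, one studies $\bar w = w\,\eta$ with the cutoff $\eta(t)=(t-t_3)^2$, which forces the maximum of $\bar w$ to be interior and therefore yields bounds on $\|D^2u(t)\|_\infty$ on any $[t_1,\tau]$ with $\tau<t_3$ that are \emph{independent of the terminal data} $u(t_3,\cdot)$. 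This gives local-in-time compactness of any minimizing sequence for $\bar A_{t_1}^{t_3}$ on $[t_1,t_2]\subsetneq[t_1,t_3]$, and the additivity then upgrades the limit to an exact minimizer of $\bar A_{t_1}^{t_2}$. No regularization of the terminal data is needed at any point.

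Your plan, by contrast, fixes the interval and proposes to semiconcavify the terminal data $\phi_n$ by inf-convolution. The difficulty is exactly the one you flag: without an a priori modulus of continuity on $\phi_n$, inf-convolution at scale $\varepsilon_n$ does not satisfy $\|\tilde\phi_n-\phi_n\|_\infty\to 0$, nor do you control $\bar m(t_2)$-integrals of $\phi_n-\tilde\phi_n$; and boundedness of $G(\phi_n)$ gives no pointwise control on $\phi_n$, since $\phi\mapsto u_\phi(t_1)$ absorbs constants and monotone perturbations. Your step ``this produces $L^1$-type control on $\phi_n$'' is therefore not justified, and the whole terminal-regularization step remains a genuine hole, not merely a technicality to be filled. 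This is precisely the obstacle that the paper sidesteps by never letting the Bernstein constant touch the endpoint $t_3$: the cutoff $\eta$ kills the terminal contribution entirely, and the additivity bought from $\chi$'s dynamic programming principle ensures the restriction to $[t_1,t_2]$ of the limit is still optimal. If you want to repair your route, you would have to establish an a priori equi-Lipschitz bound on minimizing terminal data $\phi_n$ before regularizing; the paper's proof shows this is avoidable by choosing the interval to be strictly larger than the one of interest.
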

\begin{proof} The difficulties of this minimization problem comes from the fact that, a priori, we have no regularity on the measure $\bar m$, which does not allow us to directly get the compactness of the minimizing sequence that we need. On the other hand the dynamic programming principle of $\chi$ and some local in time semiconcavity estimates help to overcome this obstacle.

 Let us recall that the dynamic programming principle for $\chi$ reads

$$
\chi(m_{0})= \inf_{(m,w)}\left( \int_{0}^{t}H^{*}\left(x,-\frac{w}{m}\right)dm(s)+\mathcal F(m(s))ds+\chi (m(t))\right)+\lambda t
$$

and that the following convex duality holds true (see Remark \ref{justif})
$$
\inf_{(m,w)\in\Pi(\bar m(t_{1}),\bar m(t_{2}))}\int_{t_{1}}^{t_{2}}\inte H^{*}\left(x,-\frac{w}{m}\right)dm(s)+\int_{\T^{d}}F(x,\bar m(s))dm(s)ds\qquad\qquad\qquad\qquad\qquad\qquad
$$
$$
\qquad\qquad\qquad\qquad\qquad\qquad\qquad\qquad\qquad=-\inf_{u\in \mathcal K}\left\{\int_{T^{d}}u(x,t_{2})d\bar m(t_{2})-\int_{T^{d}}u(x,t_{1})d\bar m(t_{1})\right\}.
$$
Fist of all we prove that for any $t_{1}<t_{2}<t_{3}$ we have that $\bar A_{t_{1}}^{t_{3}}=\bar A_{t_{1}}^{t_{2}}+\bar A_{t_{2}}^{t_{3}}$. Indeed, using the duality between the two minimization problems, we have 
$$
\bar A_{t_{1}}^{t_{2}}+\bar A_{t_{2}}^{t_{3}}=\inf_{(m,w)\in\Pi(\bar m(t_{1}),\bar m(t_{2}))}\int_{t_{1}}^{t_{2}}\inte H^{*}\left(x,-\frac{w}{m}\right)dm(s)+\int_{\T^{d}}F(x,\bar m(s))dm(s)ds
$$
$$
+\inf_{(m,w)\in\Pi(\bar m(t_{2}),\bar m(t_{3}))}\int_{t_{2}}^{t_{3}}\inte H^{*}\left(x,-\frac{w}{m}\right)dm(s)+\int_{\T^{d}}F(x,\bar m(s))dm(s)ds.
$$
If we use the dynamic programming principle of $\chi$ and the fact that $(\bar m,\bar w)$ is optimal, the expression above is equal to
$$
\int_{t_{1}}^{t_{3}}\inte H^{*}\left(x,-\frac{\bar w}{\bar m}\right)d\bar m(s)+\int_{\T^{d}}F(x,\bar m(s))d\bar m(s)ds=
$$
$$
\inf_{(m,w)\in\Pi(\bar m(t_{1}),\bar m(t_{3}))}\int_{t_{1}}^{t_{3}}\inte H^{*}\left(x,-\frac{w}{m}\right)dm(s)+\int_{\T^{d}}F(x,\bar m(s))dm(s)ds=\bar A_{t_{1}}^{t_{3}}
$$

We claim now and we prove later that, if $\{u_{n}\}_{n}$ is a minimizing sequence for $\bar A_{t_{1}}^{t_{3}}$, then $u_{n}$ uniformly convergences to a function $u\in C^{1,2}([t_{1},t_{3})\times\T^{d})$ on any $[t_{1},t]$ with $t<t_{3}$ and $u$ is admissible for $\bar A_{t_{1}}^t$. This implies that the function $u$ is a minimizer for $\bar A_{t_{1}}^{t}$ and in particular for $t=t_{2}$. If we suppose that $A_{t_{1}}^{t_{2}}(u)=\bar A_{t_{1}}^{t_{2}}+\varepsilon$, then we have
$$
A_{t_{1}}^{t_{3}}(u_{n})=A_{t_{1}}^{t_{2}}(u_{n})+A_{t_{2}}^{t_{3}}(u_{n})\geq A_{t_{1}}^{t_{2}}(u_{n})+\bar A_{t_{2}}^{t_{3}}.
$$

If we take the limit $n\rightarrow\infty$ on both side, the uniform convergence of $u_{n}$ on $[t_{1},t_{2}]$ and the fact that $u_{n}$ is a minimizing sequence for $\bar A_{t_{1}}^{t_{3}}$ give us 
$$
\bar A_{t_{1}}^{t_{3}}\geq A_{t_{1}}^{t_{2}}(u_{n})+\bar A_{t_{2}}^{t_{3}}=A_{t_{1}}^{t_{2}}(u)=\bar A_{t_{1}}^{t_{2}}+\varepsilon+\bar A_{t_{2}}^{t_{3}}=\bar A_{t_{1}}^{t_{3}}+\varepsilon
$$
which is impossible and so $u$ has to be a minimizer for $\bar A_{t_{1}}^{t_{2}}$. 

We now prove our claim and we show that the set of functions $u\in C^{1,2}([t_{1},t_{3}]\times\T^{d})$ which solves 

\be\label{semisyst}
\begin{cases}
-\partial_t u-\Laplace u +H(x,Du)= F(x,\bar m)&\mbox{in }[t_{1},t_{3}]\times\T^{d}\\
\int_{\T^{d}}u(t_{1},x)dx=0
\end{cases}
\ee
is uniformly bounded in $C^{1}([t_{1},\tau]\times\T^{d})$ for any $\tau<t_{3}$. This gives the local convergence that we used earlier. Without loss of generality we can suppose $t_{1}=0$ and $t_{3}=T$. As in Lemma \ref{2est} we argue by semiconcavity.

We consider $\xi\in\R^d$, $|\xi|\leq 1$ and we look at the equation solved by $w(t,x)=D^2u^{T}(t,x)\xi\cdot\xi$. We now define $\bar w(t,x)=w(t,x)\eta(t)$, where $\eta$ is the cutoff function $\eta(t)=(t-T)^{2}$. We choose $\xi$ such that it maximizes $\sup_{t,x}\bar w(t,x)$. 

If we derive twice in space the HJB equation in \eqref{semisyst}, then $\bar w$ solves

 $$
-\partial_t \bar w- w\eta'-\Laplace \bar w +D_{\xi\xi}H(x,Du)\eta+2D_{\xi p}H(x,Du)\cdot D^2u\xi\eta
$$
\be\label{D2eq}
+D_{pp}H(x,Du)D^2u\xi\cdot D^2u\xi\eta+D_pH(x,Du)\cdot D\bar w=D^2_{\xi\xi}F(x,m)\eta.
\ee

The cutoff function ensures the existence of a positive interior maximum of $\bar w$. At the maximum, using also the boundedness of $D_{pp}^{2}H$, the above equation implies

$$
-w\eta'-K+2D_{\xi p}H(x,Du)\cdot D^2u\xi\eta+\bar C^{-1}|D^{2}u\xi|^{2}\eta\leq D^2_{\xi\xi}F(x,m)\eta.
$$

Rearranging the terms and using the boundedness of $D^{2}_{\xi\xi}F$ we get
$$
|D^{2}u\xi|^{2}\eta\leq w\eta'+C+2C |D^{2}u\xi|\eta.
$$

As $\eta'=2\eta^{1/2}$ we can apply the Young's inequality so that $\eta'w\leq \eta/2 |D^{2}u\xi|^{2}+4$ and
$$
\frac{1}{2}|D^{2}u\xi|^{2}\eta\leq C+2C |D^{2}u\xi|\eta,
$$
which in turn gives
$$
|D^{2}u\xi|^{2}\eta\leq C.
$$
If $w^{+}$ and $\bar w^{+}$ are the positive parts of $w$ and $\bar w$, then we have our semiconcavity estimates because in $[0,\tau]\times\T^{d}$
$$
(w^{+}\eta)^{2}\leq (\bar w^{+})^{2}\leq |D^{2}u\xi|^{2}\eta^{2}\leq M.
$$

Note that $M=M(\tau)$ and it diverges when $\tau\rightarrow T$. On the other hand the estimates above, along with \eqref{uine}, gives uniform bounds on $\Vert Du\Vert_{\infty}$ on $[0,\tau]$ with $\tau<T$.

Integrating in space the HJB equation we get
$$
|\partial_{t}\int_{\T^{d}}u dx|\leq \int_{\T^{d}}|H(x,Du)|+|F(x,\bar m)|dx\leq C(\tau).
$$

As $\int_{\T^{d}}u(0,x)dx=0$, the above inequality ensures that $|\int_{\T^{d}}u(t,x)dx|\leq C(\tau)$ for any $t\leq\tau$. This gives us  ${\rm osc} (u(t,\cdot))\leq |\int_{\T^{d}}u(t,x)dx|+C \sup_{x} |D u(t,x)|\leq C(\tau)$. 

As in Lemma \ref{2est}, the boundedness of space derivatives implies also that $|\partial_{t}u(t)|\leq C(\tau)$ for any $t\leq\tau$ and so the claim.  

\end{proof}

Here we propose the proof of the existence of the smooth functions that we used in Subsection \ref{sec2}.

\begin{lemma}\label{199} For any $m_0\in {\mathcal P}(\T^d)$ and any $\ep>0$, there exists $\Phi: {\mathcal P}(\T^d)\to \R$ of class $C^1$ such that $\Phi(m_0)=1$ and $\Phi=0$ on $B_{\varepsilon}^c(m_0)$. Moreover, we can choose $\Phi$ such that 
$$
\|D_m\Phi\|_\infty \leq 10/\ep. 
$$
and with $D_{x}D_m\Phi$ bounded.
\end{lemma}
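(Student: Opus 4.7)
The plan is to construct $\Phi$ as a composition $\Phi = \phi \circ g$, where $\phi \in C^\infty(\R; [0,1])$ is a standard one-dimensional cutoff and $g : \mathcal P(\T^d) \to [0,\infty)$ is a smooth surrogate for $\bm d(\cdot, m_0)$. Once $g$ has been built with $g(m_0)=0$, $g(m) \geq 9\varepsilon/10$ whenever $\bm d(m,m_0) \geq \varepsilon$, and $\|D_m g\|_\infty \leq 1$, it suffices to pick $\phi$ with $\phi(0)=1$, $\phi \equiv 0$ on $[9\varepsilon/10, \infty)$, $0 \leq \phi \leq 1$, and $\|\phi'\|_\infty \leq 10/\varepsilon$ (easily obtained by mollifying a piecewise affine profile). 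Indeed $\Phi(m_0)=1$ and $\Phi \equiv 0$ on $B_\varepsilon^c(m_0)$ by construction, while the chain rule gives $\|D_m \Phi\|_\infty = \|\phi'(g)\, D_m g\|_\infty \leq 10/\varepsilon$.

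To build $g$, I would discretize the Kantorovich--Rubinstein representation
$$\bm d(m, m_0) = \sup\Big\{\int_{\T^d} u\, d(m - m_0) : u \in C^{0,1}(\T^d),\, \mathrm{Lip}(u) \leq 1\Big\}.$$
By Arzel\`a--Ascoli the set $\mathcal L := \{u \in C^{0,1}(\T^d): \mathrm{Lip}(u) \leq 1,\, u(0) = 0\}$ is compact in $C^0(\T^d)$, so for any $\eta > 0$ one can extract a finite $\eta$-net $\{u_1,\dots,u_N\} \subset \mathcal L$, which after a mild mollification (preserving the Lipschitz bound) we may take to lie in $C^\infty(\T^d)$ and to be closed under $u \mapsto -u$. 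I then set
$$g(m) := \beta \log \sum_{k=1}^{N} \exp\Big(\beta^{-1}\int_{\T^d} u_k\, d(m - m_0)\Big) - \beta\log N,$$
for a parameter $\beta > 0$ to be tuned. The classical sandwich $\max_k x_k \leq \beta \log \sum_k e^{x_k/\beta} \leq \max_k x_k + \beta\log N$, combined with the $\eta$-density of $\{u_k\}$ in $\mathcal L$, yields $\bm d(m, m_0) - 2\eta - \beta\log N \leq g(m) \leq \bm d(m, m_0)$; taking $\eta = \beta\log N = \varepsilon/40$ secures $g(m_0)=0$ and $g(m) \geq 9\varepsilon/10$ whenever $\bm d(m,m_0) \geq \varepsilon$.

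A direct computation then shows that $g$ is smooth in $m$ with
$$\frac{\delta g}{\delta m}(m, x) = \sum_{k=1}^N w_k(m)\, u_k(x) + c(m),\qquad w_k(m) := \frac{\exp(\beta^{-1}\int u_k\, d(m-m_0))}{\sum_j \exp(\beta^{-1}\int u_j\, d(m-m_0))},$$
where $c(m)$ is the additive constant enforcing the normalization convention. Since the weights $w_k$ form a convex combination and $|Du_k| \leq 1$, differentiating in $x$ gives $D_m g(m,x) = \sum_k w_k(m)\, D u_k(x)$ with $\|D_m g\|_\infty \leq 1$. The second derivative $D_x D_m g(m, x) = \sum_k w_k(m)\, D^2 u_k(x)$ is uniformly bounded by $\max_k \|D^2 u_k\|_\infty$, so $D_x D_m \Phi = \phi'(g)\, D_x D_m g$ is bounded as well.

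The principal technical delicacy lies in the quantitative coordination of the three small scales involved: the discretization error $\eta$, the soft-max temperature $\beta\log N$ (which is coupled to $\eta$ through $N$, since finer $\eta$-nets require larger $N$), and the target slope $10/\varepsilon$ of $\phi$. Absorbing the first two errors strictly inside the plateau region of $\phi$ is what preserves the sharp constant $10/\varepsilon$; with the tuning $\eta = \beta\log N = \varepsilon/40$ above, the remaining verifications reduce to routine chain-rule computations.
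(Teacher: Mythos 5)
Your proposal follows essentially the same strategy as the paper: replace $\bm d(\cdot,m_0)$ by a log-sum-exp smoothing of its Kantorovich--Rubinstein representation over a finite $\eta$-net of smooth $1$-Lipschitz test functions vanishing at a point, observe that the intrinsic derivative is a convex combination of the $Du_k$ (hence $\leq 1$ in sup-norm, and similarly $D_xD_m g$ is a convex combination of the $D^2 u_k$), and compose with a one-dimensional cutoff of slope at most $10/\varepsilon$. The only differences are cosmetic: you subtract $\beta\log N$ so that $g(m_0)=0$ exactly and close the net under sign reversal, whereas the paper tolerates $\Psi_N^\delta(m_0)\leq 2\varepsilon/5$ and places the cutoff's transition region between $2\varepsilon/5$ and $3\varepsilon/5$, and your scale choices leave even more slack.
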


\begin{proof} Let $E$ be the compact set of $1-$Lipschitz continuous maps on $\T^d$ vanishing at $0$.
Let $(\phi_n)$ be a dense family in $E$ consisting of smooth maps. For $N$ large, we consider
$$
\Psi_N(m)=\sup_{n=1, \dots, N} \int_{\T^d} \phi_n(m-m_0). 
$$
Then  $(\Psi_N)$ is a family which is uniformly Lipschitz continuous in $\mathcal P(\T^{d})$ and converges to $\dk(\cdot, m_0)$. So, for $\eta>0$ small there exists $N$ large enough such that 
$$
\|\Psi_N-\dk(\cdot, m_0)\|_\infty\leq \eta. 
$$
Next we approximate the sup in the definition of $\Psi_N$. We consider 
$$
\Psi_N^\delta(m) = \delta \log\left( \sum_{n=1}^N  \exp\left\{ \delta^{-1}\int_{\T^d} \phi_n(m-m_0)\right\}\right).
$$
Recall that 
$$
\Psi_N(m) \leq \Psi_N^\delta(m) \leq \delta \ln(N) +\Psi_N(m).
$$
  Note that $\Psi_N^\delta$ is $C^1$, with 
\be\label{akjn}
D_m\Psi_N^\delta(m,x)= \left( \sum_{n=1}^N  \exp\left\{ \delta^{-1}\int_{\T^d} \phi_n(m-m_0)\right\}\right)^{-1} 
\sum_{n=1}^N  \exp\left\{ \delta^{-1}\int_{\T^d} \phi_n(m-m_0)\right\} D\phi_n(x).
\ee
Note that $D_m\Psi_N^\delta(m,x)$ is a convex combination of $D\phi_n(x)$, so that 
$$
| D_m\Psi_N^\delta(m,x) | \leq \sup_n | D\phi_n(x) | \leq 1.
$$
For $\delta>0$ small (depending on $N$), we have 
$$
\|\Psi_N^\delta-\dk(\cdot, m_0)\|_\infty\leq 2\eta. 
$$
In particular, for $\ep>0$, choose $\eta= \ep/5$: then 
$$
\inf_{m\in B^c_\ep(m_0)} \Psi_N^\delta(m)\geq \ep -2\eta = 3\ep/5 \qquad {\rm and} \qquad
\Psi_N^\delta(m_0) \leq 2 \eta = 2\ep/5. 
$$

Moreover, if we derive \eqref{akjn} in space we get
$$
D_{x}D_m\Psi_N^\delta(m,x)= \left( \sum_{n=1}^N  \exp\left\{ \delta^{-1}\int_{\T^d} \phi_n(m-m_0)\right\}\right)^{-1} 
\sum_{n=1}^N  \exp\left\{ \delta^{-1}\int_{\T^d} \phi_n(m-m_0)\right\} D^{2}\phi_n(x).
$$
Note that $D_{x}D_m\Psi_N^\delta(m,x)$ is a convex combination of $D^{2}\phi_n(x)$. Therefore, there exists a constant $C_{N}$ such that
$$
| D_{x}D_m\Psi_N^\delta(m,x) | \leq \sup_n | D^{2}\phi_n(x) | \leq C_{N}.
$$

To complete the result, define a map $\zeta_\ep= \R\to [0,1]$ smooth and nonincreasing, with $\zeta_\ep(s)= 0$ if $s\geq 3\ep/5$ and $\zeta_\ep(s)= 1$ for $s\geq  2\ep/5$. We can choose $\|\zeta_\ep'\|_\infty\leq 10/\ep$. The map $\Phi=\zeta_\ep\circ \Psi_N^\ep$ satisfies the claim. 
\end{proof}

\begin{lemma}\label{200} Let $A$ and $B$ be closed subsets of $\mathcal P(\T^{d})$ with an empty intersection. Then there exists a $C^1$ map $\Phi:\mathcal P(\T^{d})\to \R$ such that $\Phi=1$ on $A$, $\Phi=0$, $B$ and $D_{x}D_{m}\Phi$ bounded. 
\end{lemma}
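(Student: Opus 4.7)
The plan is a smooth Urysohn-type construction built from the bump functions of Lemma \ref{199}. Since $\mathcal P(\T^d)$ is compact and $A$, $B$ are disjoint closed sets, they are separated by some $\varepsilon>0$ (in the Monge-Kantorovich distance), so $\bm d(m,B)>\varepsilon$ for every $m\in A$. For each $m_0\in A$, Lemma \ref{199} (applied with this $\varepsilon$) provides a $C^1$ function $\Phi_{m_0}:\mathcal P(\T^d)\to\R$ with $\Phi_{m_0}(m_0)=1$, $\Phi_{m_0}\equiv 0$ outside $B_\varepsilon(m_0)$, and with $D_m\Phi_{m_0}$, $D_xD_m\Phi_{m_0}$ uniformly bounded. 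Inspecting the proof of Lemma \ref{199}, the bump is obtained as $\zeta_\varepsilon\circ\Psi_N^\delta$ with $\zeta_\varepsilon$ taking values in $[0,1]$, so we may assume $\Phi_{m_0}\in[0,1]$. In particular, $\Phi_{m_0}\equiv 0$ on $B$, because $B_\varepsilon(m_0)\cap B=\emptyset$.

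Next I would pass from these pointwise bumps to a single function covering all of $A$. By continuity, there is an open neighborhood $U_{m_0}$ of $m_0$ where $\Phi_{m_0}>1/2$. Since $A$ is a closed subset of the compact space $\mathcal P(\T^d)$, it is itself compact, and I can extract a finite subcover $U_{m_1},\dots,U_{m_N}$ of $A$. Set
$$
\Psi(m)=\sum_{i=1}^N \Phi_{m_i}(m).
$$
Then $\Psi$ is $C^1$ with bounded $D_m\Psi$ and $D_xD_m\Psi$ (finite sums of bounded objects), $\Psi\geq 0$ everywhere, $\Psi\equiv 0$ on $B$, and for each $m\in A$ there is some $i$ with $m\in U_{m_i}$, whence $\Psi(m)\geq\Phi_{m_i}(m)>1/2$.

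Finally, I would compose with a one-variable smooth cutoff to flatten the values. Choose $\eta\in C^\infty(\R;[0,1])$ nondecreasing with $\eta(s)=0$ for $s\leq 1/4$ and $\eta(s)=1$ for $s\geq 1/2$, and set $\Phi:=\eta\circ\Psi$. By construction $\Phi=1$ on $A$ and $\Phi=0$ on $B$. By the chain rule for the measure derivative,
$$
D_m\Phi(m,x)=\eta'(\Psi(m))\,D_m\Psi(m,x),\qquad D_xD_m\Phi(m,x)=\eta'(\Psi(m))\,D_xD_m\Psi(m,x),
$$
both of which are bounded since $\eta'$ is bounded and the corresponding derivatives of $\Psi$ are bounded.

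There is no real obstacle: the proof is a smooth partition-of-unity argument adapted to $\mathcal P(\T^d)$. The only point to be slightly careful about is to note, from the proof of Lemma \ref{199}, that the bump functions may be chosen nonnegative, so that the finite sum $\Psi$ stays bounded below by $1/2$ on $A$ and remains identically zero on $B$.
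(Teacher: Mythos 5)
Your proof is correct and follows essentially the same strategy as the paper: cover $A$ with the bump functions of Lemma \ref{199}, combine them into a single $C^1$ function that is bounded away from zero on $A$ and vanishes on $B$, and then compose with a smooth scalar cutoff. The only technical difference is the combination step: the paper uses a soft-max $\delta\log\sum_n\exp(\delta^{-1}\Phi_n)$ over a dense sequence in $A$ together with the explicit Lipschitz bound $\|D_m\Phi_n\|_\infty\leq 10/\varepsilon$, whereas you take a plain finite sum obtained from a finite subcover of the compact set $A$; both are equally valid, and yours is slightly more elementary since it avoids the soft-max and the quantitative Lipschitz estimate.
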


\begin{proof} Let $\ep>0$ be the minimal distance between $A$ and $B$: 
$$
\ep:= \inf_{m\in A, \ m'\in B}\dk(m,m')>0. 
$$
Let $(m_n)$ be dense in $A$ and $\Phi_n:\mathcal P(\T^{d})\to [0,1]$ be associated with $m_n$ as in Lemma \ref{199}: $\Phi_n(m_n)=1$, $\Phi_n=0$ in $B_\ep^c(m_n)$ and $\|D_{x}D_m\Phi_n\|$ bounded. For $\delta>0$ small and $N$ large, let us set
$$
\Psi_N^\delta(m)= \delta \log\left( \sum_{n=1}^N  \exp\left\{ \delta^{-1} \Phi_n(m)\right\}\right).
$$
Note that $\Psi_N^\delta$ is $C^1$ with 
$$
D_{x}D_m\Psi_N^\delta(m,y)=  \left( \sum_{n=1}^N  \exp\left\{ \delta^{-1}\Phi_n(m)\right\}\right)^{-1} 
\sum_{n=1}^N  \exp\left\{ \delta^{-1}\Phi_n(m)\right\} D_{x}D_{m}\Phi_n(m,y).
$$

In particular, 
$$
| D_{x}D_m\Psi_N^\delta(m,x) | \leq \sup_n | D^{2}\phi_n(x) | \leq C_{N}.
$$

For $m\in B$ we have $\Phi_n(m)=0$, so that $\Psi_N^\delta(m)=\delta \ln(N)$. As $(m_n)$ is dense, we can choose, for $\eta>0$, $N$ large enough so that 
$$
\max_{m\in A} \min_{n=1, \dots, N} \dk( m,m_n)\leq \eta. 
$$
Then, for $m\in A$, there exists $n\in \{1, \dots, N\}$ with  $ \dk( m,m_n)\leq \eta$, so that (by Lipschitz continuity of $\Phi_n$)
$$
\Phi_n(m)\geq \Phi_n(m_n)-10\ep^{-1}\dk(m,m_n)\geq 1- 10\ep^{-1}\eta.
$$
Thus
$$ 
\Psi_N^\delta(m) \geq  \delta \log\left(  \exp\left\{ \delta^{-1}\Phi_n(m)\right\}\right) \geq  1- 10\ep^{-1}\eta. 
$$
We now choose $\eta>0$ such that $1- 10\ep^{-1}\eta= 2/3$ (which in turns fixes $N$), and then $\delta>0$ small such that $\delta \ln(N)\leq 1/3$. Then we have 
$$
\inf_{m\in A} \Psi_N^\delta(m)\geq 2/3\qquad {\rm and }\qquad \sup_{m\in B} \Psi_N^\delta(m)\leq 1/3. 
$$
Then conclusion follows easily. 
\end{proof}

\bibliography{KAMbib}
\bibliographystyle{amsplain}

\end{document}